\title{On Inner Iterations of Jacobi-Davidson Type Methods for Large SVD
Computations\thanks{This work was supported in part by
the National Science Foundation of China (No.11771249).}}
\author{Jinzhi Huang\thanks{Department of Mathematical Sciences, Tsinghua University, 100084 Beijing,
China(\email{huangjz15@mails.tsinghua.edu.cn}).}
\and  Zhongxiao Jia\thanks{Department of Mathematical Sciences, Tsinghua University, 100084
Beijing, China(\email{jiazx@tsinghua.edu.cn}).}}
\begin{document}

\maketitle
\begin{abstract}
We make a convergence analysis of the harmonic and refined
harmonic extraction versions of Jacobi-Davidson SVD (JDSVD) type methods
for computing one or more interior singular triplets of a large matrix $A$.
At each outer iteration of these methods, a correction equation,
i.e., inner linear system, is solved
approximately by using iterative methods, which leads to two inexact JDSVD type
methods, as opposed to the exact methods where correction equations
are solved exactly. Accuracy of inner iterations critically affects
the convergence and overall efficiency of the inexact
JDSVD methods. A central problem is how accurately
the correction equations should be solved so as to ensure
that both of the inexact JDSVD methods can mimic their exact counterparts
well, that is, they use almost the same outer iterations to achieve
the convergence. In this paper, similar to the available results on
the JD type methods for
large matrix eigenvalue problems, we prove that each inexact JDSVD method
behaves like its exact counterpart if all the correction equations are solved
with $low\ or\ modest$ accuracy during outer iterations.
Based on the theory, we propose practical stopping criteria for inner iterations.
Numerical experiments confirm our theory and the effectiveness of the inexact
algorithms.
\end{abstract}
%
\begin{keywords}
  Singular triplets, Rayleigh quotient, JDSVD method, harmonic extraction, refined
harmonic extraction, subspace expansion, inexact method, exact method,
inner iteration, outer iteration
\end{keywords}
%
\begin{AMS}
65F15, 15A18, 65F10
\end{AMS}
\pagestyle{myheadings}
\thispagestyle{plain}
\markboth{Jinzhi Huang and Zhongxiao Jia}{Inner Iterations of
Jacobi-Davidson Type Methods}

\section{Introduction}\label{introduction}
Consider the singular value decomposition (SVD)
\begin{equation}
 \left\{\begin{aligned}&Av_i=\sigma_i u_i\\&A^Tu_i
 =\sigma_i v_i\end{aligned}\right.,  \quad  \quad i=1,2,\dots,N
 \end{equation}
of a large and possibly sparse matrix $A\in\mathbb{R}^{M\times N}$
with $M\geq N$.
The $(\sigma_i, u_i, v_i),\ i=1,2,\ldots,N$, are called the singular triplets of $A$.
For a given target $\tau>0$, let the singular values $\sigma_1, \sigma_2,\dots,
\sigma_N$ be labeled as
\begin{equation}\label{equation:43}
|\sigma_1-\tau|<|\sigma_2-\tau|\leq\dots\leq|\sigma_N-\tau|.
\end{equation}
We are interested in the simple singular value $\sigma_1$ closest to the target
$\tau$ and/or the corresponding left and right singular vectors $u_1$ and $v_1$.
We will denote $(\sigma_1, u_1,v_1)$ by $(\sigma, u^{*},v^{*})$ for simplicity,
which is called an interior singular triplet when $\tau$ is inside the singular
spectrum of $A$. We make two remarks. The first is that all the algorithms to
be proposed, the
analysis and results apply to a complex $A$ trivially with the transpose of a vector
or matrix replaced by its conjugate transpose. The second is
that we focus on the analysis of the algorithms for computing one simple
singular triplet of $A$ and then derive the algorithms with deflation
for computing more than one, i.e.,  $\ell>1$, singular triplets of $A$
and extend our analysis and results to these variants.

Projection methods that are widely used for large eigenproblems
\cite{golub2012matrix,parlett1998symmetric,saad2011numerical,stewart2001matrix}
have been adapted to the computation of
$(\sigma, u^{*},v^{*})$. Given projection subspaces,
there are four extraction approaches: standard, harmonic, refined, and refined harmonic
extractions.
The standard extraction \cite{hernandes2008robust,hochstenbach2001jacobi,
hochstenbach2004harmonic,jia2003implicitly,simon2000lowrank,stoll2012krylov}
suits well for large singular values (cf. Theorem 4.3 of \cite{hochstenbach2001jacobi}).
The harmonic extraction is preferable to the interior and the smallest
singular values. However, the approximate singular vectors obtained by the standard
and harmonic extractions may converge irregularly or even fail to converge
even though the approximate singular values converge \cite{jia2015harmonic,jia2003implicitly}.
The refined and refined harmonic extractions \cite{hochstenbach2001jacobi,
hochstenbach2004harmonic,jia2002refinedharmonic,jia2005convergence,
jia2003implicitly,jia2010refined,
kokiopoulou2004computing,niu2012harmonic,wu17}
fix the possible non-convergence of singular
vectors and obtain more accurate approximations. As observed and claimed in
\cite{hochstenbach2004harmonic,jia2003implicitly, jia2010refined,kokiopoulou2004computing,
wu16,wu2015preconditioned}, the refined and refined harmonic extractions
appear to give better accuracy than the standard and harmonic counterparts,
respectively.

As a matter of fact, when computing a truly interior singular triplet $(\sigma, u^{*},v^{*})$,
the standard extraction encounters a serious difficulty even if
searching subspaces are sufficiently good:
it is practically hard to pick up a correct Ritz value to approximate $\sigma^*$
even though there is a good one among them.
As a result, whenever a wrong Ritz value is selected, the refined extraction
definitively fails to deliver a good approximation to $(u^*,v^*)$.
Therefore, the refined extraction may also be unsuitable to compute the truly
interior $(\sigma, u^{*},v^{*})$.
In contrast, the above two difficulties can be nicely overcome by using the
harmonic and refined harmonic extractions, especially by the latter one
because the refined harmonic extraction will produce better approximate
singular vectors whose convergence can be guaranteed provided that searching
subspaces are sufficiently accurate \cite{jia2015harmonic,wu17}.
In other words, unlike the standard and refined extractions,
for good searching subspaces, the harmonic and refined harmonic extractions
produce good approximations to the interior $(\sigma^*,u^*,v^*)$,
and the refined harmonic extraction is favorable due to its better convergence.
Because of these reasons, we will only consider the harmonic and refined harmonic
extractions in this paper.

Sleijpen and van der Vorst \cite{sleijpen2000jacobi} propose the
Jacobi-Davidson (JD) method for large matrix eigenvalue problems,
which uses the standard and harmonic extractions.
Hochstenbach \cite{hochstenbach2001jacobi,hochstenbach2004harmonic}
extends the JD method to the SVD computation in a novel way, referred as JDSVD,
which requires that an approximate singular triplet satisfies a certain
double orthogonality condition other than the ordinary orthogonality condition.
Moreover, JDSVD expands the projection subspace in a different way from the JD
applied to the eigenvalue problem of the augmented matrix
$\begin{bmatrix} \begin{smallmatrix}0 &A\\A^T&0\end{smallmatrix} \end{bmatrix}$
directly, whose eigenvalues are $\pm\sigma_i,\ i=1,2,\ldots,N$ and $M-N$ zeros,
and at each iteration the subspace dimension of JDSVD is increased by two
rather than one as done in the latter.
Obviously, JDSVD is mathematically different from the JD method applied to the
augmented matrix directly, and is shown to be more suitable and more effective
than the direct application of the JD method for the eigenvalue problem
to the mentioned augmented matrix.
Wu and Stathopoulos \cite{wu2015preconditioned} propose a two-stage SVD method,
called PHSVD, which transforms the SVD computation into the eigenvalue problems
of $A^TA$ and $\begin{bmatrix} \begin{smallmatrix}0 &A\\A^T&0\end{smallmatrix} \end{bmatrix}$.
Unlike JDSVD, PHSVD performs the projection first on the eigenvalue problem
of the cross-product $A^TA$ and then uses the JD type methods to solve
the eigenvalue problem of
$\begin{bmatrix} \begin{smallmatrix}0 &A\\A^T&0\end{smallmatrix} \end{bmatrix}$.
At each iteration of JDSVD or the second stage of PHSVD, one needs to solve a
correction equation when expanding searching subspaces.
Wu, Romero and Stathopolous \cite{wu16} develop the software PRIMME\_SVDS
that implements the PHSVD method using the standard extraction
and the refined or refined harmonic extraction, and they provide intuitive user
interfaces in C, MATLAB, Python, and R for both ordinary and advanced users
so as to fully exploit the power of PRIMME\_SVDS.

Since the matrix $A$ is large, for each method
it is generally impractical to solve the correction equation by direct solvers.
Therefore, the correction equation has to be solved approximately by iterative
solvers. This gives rise to the inexact JD type methods for either
eigenvalue problems or SVD computations, which distinguish
from the exact methods where all the correction equations are solved exactly.
Accuracy of inner iterations plays the critical role in
the convergence and overall efficiency of the inexact JD methods.
Naturally, a central question is:
how accurately the correction equations should be solved in order to ensure
that they use almost the same outer iterations to achieve
the desired convergence?

Accuracy issues on inner iterations in the JD type methods are fundamental
and have been receiving intensive attention for the JD
type methods for large matrix eigenvalue problems since the advent of the JD method.
Notay \cite{notay2002combination} makes an analysis on the
{\em simplified} JD method, i.e., without subspace acceleration,
for the eigenvalue problem, called JDCG algorithm, in which the correction
equation is solved by the preconditioned CG method and the correction
vector is directly added to the current approximate eigenvector to form a new
approximation in the next outer iteration. He shows that during inner
iterations, the convergence of the outer iteration towards the desired
eigenpair can be monitored by the reduction of the residual norms of inner
iterations. Based on this result, Notay proposes stopping criteria for
inner iterations of the JDCG algorithm.
Hochstenbach and Notay \cite{hochsten2009} extend the results
in \cite{notay2002combination} to the generalized Hermitian eigenvalue
problem. Wu, Romero and Stathopolous
\cite{wu16,wu2015preconditioned}
adapt the stopping criteria in \cite{notay2002combination} for inner iterations to
their general JD type methods for large SVD computations, in which they use
the quasi minimal residual (QMR) algorithm other than the CG algorithm to
solve generally indefinite correction equations.
The JDSVD methods in \cite{hochstenbach2001jacobi,hochstenbach2004harmonic}
simply stop inner iterations after a small number of iterations
are performed.

Unfortunately, Notay's analysis approach and the results in
\cite{hochsten2009,notay2002combination,wu16,wu2015preconditioned}
are only applicable to the simplified JD type methods, and they cannot be
extended to the more complicated and practical {\em general} JD type methods
with subspace acceleration for the eigenvalue problem or the SVD computation,
in which the (approximate) solution of the correction equation is used
to expand the subspace and new approximate eigenpair or singular triplet
is formed by performing one of the four extractions onto the expanded
subspace. However, one must be aware that the simplified JD method is seldom
used practically due to its low effectiveness.
For the general JD type methods for eigenvalue problems that are
based on standard, harmonic and refined harmonic extractions,
Jia and Li \cite{jia2014inner,jia2015harmonic} prove for the first time that once
all the correction equations are solved only with $low\ or\ modest$
accuracy by any iterative solver,
the JD type methods can mimic their exact counterparts well,
in the sense that the outer iterations used by the former ones
to achieve the convergence are almost the same as those used by the latter ones.
Based on the theory established, they propose practical stopping criteria for inner
solvers. A remarkable fact is that,
whenever the JD type methods for the SVD computation
are mathematically equivalent to the JD type methods
applied to the eigenvalue problem of
$\begin{bmatrix} \begin{smallmatrix}0 &A\\A^T&0\end{smallmatrix} \end{bmatrix}$,
all the results in \cite{jia2014inner,jia2015harmonic} directly carry over to
them, which include the algorithms in \cite{wu16,wu2015preconditioned}.

However, the results in
\cite{jia2014inner,jia2015harmonic} have not yet been extended
to the double orthogonality based JDSVD methods
in \cite{hochstenbach2001jacobi,hochstenbach2004harmonic}. It is unclear
whether or not similar results exist and are
applicable to these JDSVD methods. Therefore, reliable and general-purpose
stopping criteria for corresponding inner iterations are still lacking.

In this paper, by requiring that an approximate singular
triplet satisfies the double orthogonality condition \cite{hochstenbach2001jacobi},
we propose a harmonic extraction based JDSVD type method, in which
the approximate singular value must be a certain Rayleigh quotient other
than the associated harmonic Ritz value.
Then, by combining the harmonic extraction with the refined extraction, we present
a refined harmonic extraction based JDSVD type method.
Our harmonic and refined harmonic extractions are
mathematically different from the ones in
\cite{wu16,wu2015preconditioned} in that (i) ours require double orthogonality
condition while theirs use the ordinary orthogonality and (ii) ours expand
the projection subspace by two dimensions while theirs
expand the projection subspace by one dimension at each outer iteration.
Our methods and their derivations
are also different from the harmonic and refined extraction based
JDSVD methods in \cite{hochstenbach2001jacobi,hochstenbach2004harmonic}
in several aspects; for example,
the latter ones use the principle of harmonic and refined extractions
in different manners. On the other hand, we will notice
a key fact that the correction equations involved in our methods
and the ones in \cite{hochstenbach2001jacobi,hochstenbach2004harmonic}
at each outer iteration have the {\em same forms}.
Therefore, the problem on the solution accuracy of these correction equations
can be considered in a {\em unified} way.

Precisely, our primary goal is to establish the {\em lowest} accuracy
requirement for inner iterations involved in JDSVD methods
such that each inexact method mimics its exact counterpart well,
that is, they use very comparable outer iterations to achieve the
desired convergence. The goal is fundamental. First,
with the exact JDSVD type methods as a reference standard,
it guarantees the robustness and generality of the corresponding inexact
counterparts. Second, under this presupposition, the inexact JDSVD type algorithms
are the most efficient since, for a given iterative solver,
we can use the least cost, i.e., fewest inner iterations, to solve each
correction equation at every outer iteration.

Inspired by the work of Jia and Li \cite{jia2014inner,jia2015harmonic},
under the assumption that the current
approximate singular triplet is reasonably good,
we make a rigorous one-step analysis on the two inexact JDSVD type algorithms.
We first establish an intimate connection between the solution accuracy of the
correction equation and the accuracy of the expansion vectors. Then
we derive the accuracy relationship between the inexact and exact expanded
subspaces, based on which we establish the accuracy requirement
for inner iterations. The results show that each inexact JDSVD mimics the
corresponding exact JDSVD well once all the correction equations are solved
with the low or modest accuracy $10^{-4}\sim 10^{-3}$ during outer iterations.
These results are similar to those on the inexact JD type methods for general
eigenvalue problems \cite{jia2014inner,jia2015harmonic}. Furthermore, we prove
that these results can be extended to the JDSVD type methods with deflation
that are used to compute $\ell>1$ singular triplets of $A$.
We consider practical issues and design practical stopping criteria for
inner iterations in the JDSVD methods. It is important to remind
that all the results
hold for the proposed methods in this paper and the methods
in \cite{hochstenbach2001jacobi,hochstenbach2004harmonic} because the correction
equations in them have the same form, as mentioned previously.
Finally, we report numerical
experiments to justify that our theory works well.

The paper is organized as follows. In Section \ref{section:2},
we describe the harmonic and refined harmonic JDSVD methods.
In Section \ref{section:3}, we derive relationships between
the solution accuracy of the correction equation and the accuracy
of the expansion vectors. We prove that the inexact JDSVD methods
can mimic the exact JDSVD methods well when the solution accuracy of
the correction equation is low or modest, i.e., $10^{-4}\sim 10^{-3}$,
during all outer iterations. In Section~\ref{section:7}, we describe the JDSVD type
methods with deflation for computing more than one singular triplets and
extend our results to this case. In Section \ref{section:4},
we design practical stopping criteria for inner iterative solvers.
In Section \ref{section:5}, we report numerical experiments to
confirm our theory. Finally, we conclude the paper in Section \ref{section:6}.

Throughout the paper, denote by $\|\cdot\|$ the 2-norm and $\kappa(X)=
\sigma_{\max}(X)/\sigma_{\min}(X)$ the condition number of a matrix $X$
with $\sigma_{\max}(X)$ and $\sigma_{\min}(X)$ being the largest and
smallest singular values of $X$, by $I_k$ the identity matrix
of order $k$, and by $X^T$ the transpose
of $X$.

\section{Harmonic and refined harmonic JDSVD methods} \label{section:2}

Assume that we have a pair of $m$-dimensional searching
subspaces $\mathcal{U}$ and $\mathcal{V}$, from which we compute
an approximate singular triplet $(\theta,u,v)$ satisfying
$u\in\mathcal{U},v\in\mathcal{V}$ and $||u||=||v||=1$.
Define the residual of $(\theta,u,v)$ as
\begin{equation}\label{residual}
r=r(\theta, u,v):=\begin{bmatrix}Av-\theta u\\ A^Tu-\theta v\end{bmatrix}.
\end{equation}
We require that $r\perp \perp (u, v)$, where $\perp\perp$
means that $Av-\theta u\perp u$ and $A^Tu-\theta v\perp v$,
which is called the double orthogonality in \cite{hochstenbach2001jacobi}.
We mention that the so called ordinary orthogonality condition
means $r\perp [u^T,v^T]^T$.

Assume that the columns of $U$ and $V$ form orthonormal bases of
$\mathcal{U}$ and $\mathcal{V}$, respectively, and
$(\theta,u,v)$ is reasonably good but not converged yet.
Then a JDSVD type method
expands $\mathcal{U}$ and $\mathcal{V}$ in the following way:

Solve the correction equation
\begin{equation}\label{equation:1}
  \begin{bmatrix} I_M-uu^T & \\ & I_N-vv^T \end{bmatrix}
\begin{bmatrix} -\tau I_M  & A\\A^T& -\tau I_N \end{bmatrix}
\begin{bmatrix} I_M-uu^T & \\ & I_N-vv^T\end{bmatrix}
\begin{bmatrix} s\\t \end{bmatrix}=-r
\end{equation}
for $(s,t)\perp\perp (u,v)$, where $s\in \mathbb{R}^M$ and $t\in\mathbb{R}^N$.
Orthonormalize $s$ and $t$ against $U$ and $V$ to get $u_{+}$
and $v_{+}$, respectively. The normalized subspace
expansion vectors are
\begin{equation}\label{exvector}
u_{+}=\frac{(I_M-P_U)s}{\|(I_M-P_U)s\|},
\quad  \quad
v_{+}=\frac{(I_N-P_V)t}{\|(I_N-P_V)t\|},
\end{equation}
where $P_U=UU^T$ and $P_V=VV^T$ are the orthogonal projectors
onto $\mathcal{U}$ and $\mathcal{V}$, respectively. The columns
of $U_{+}=[U, u_{+}]$ and $V_{+}=[V,v_{+}]$ are the orthonormal bases of
the expanded subspaces $\mathcal{U}_+$ and $\mathcal{V}_+$, from which
we can compute new approximate singular triplet.

\subsection{Harmonic Jacobi-Davidson SVD method}\label{subsection:HJDSVD}

It is known that $(\sigma,w^{*}=\frac{1}{\sqrt2}\begin{bmatrix}\begin{smallmatrix}
u^{*}\\v^{*}\end{smallmatrix}\end{bmatrix})$
is an eigenpair of the augmented matrix
\begin{align}\label{augment}
C&=\begin{bmatrix}
0&A\\A^T&0
\end{bmatrix}.
\end{align}
We use the harmonic extraction
described in \cite{jia2015harmonic,niu2012harmonic}
to compute an approximation of $(\sigma,w^*)$. Notice that the projection or searching
subspace is spanned by the columns of
$\begin{bmatrix}\begin{smallmatrix}U&\\&V\end{smallmatrix}\end{bmatrix}$.
Given the target $\tau>0$, let the approximate eigenvector be
$\widetilde{w}=\begin{bmatrix}\begin{smallmatrix}U&\\&V\end{smallmatrix}\end{bmatrix}
\widetilde f\in\mathbb{R}^{M+N}$, and define
\begin{equation}\label{defG}
G=\begin{bmatrix} U^T&\\&V^T \end{bmatrix}
\begin{bmatrix} -\tau I_M&A\\
A^T&-\tau I_N \end{bmatrix}^2
\begin{bmatrix} U&\\&V\end{bmatrix}
\end{equation}
and
\begin{equation}\label{defF}
F=\begin{bmatrix} U^T&\\&V^T \end{bmatrix}
\begin{bmatrix} -\tau I_M&A\\A^T&-\tau I_N \end{bmatrix}
\begin{bmatrix} U&\\&V \end{bmatrix}
=\begin{bmatrix} -\tau I_m &H\\ H^T&-\tau I_m \end{bmatrix}
\end{equation}
with $H=U^TAV$. Then the harmonic extraction on $C$ amounts to solving the
following generalized symmetric eigenvalue problem of the matrix pencil $(F, G)$:
\begin{equation}\label{equation:17}\left\{\begin{aligned}
F\widetilde f&=\frac{1}{\nu}G\widetilde f \ \mbox{with}\ \|\widetilde f\|=1\\
\vartheta&=\nu+\tau
\end{aligned}\right.\end{equation}
for the largest eigenvalue $\frac{1}{\nu}$ in magnitude and the
corresponding eigenvector $\widetilde f=\begin{bmatrix}\begin{smallmatrix}
\widetilde c^T,\widetilde d^T\end{smallmatrix}
\end{bmatrix}^T$, where $\widetilde c,\widetilde d\in\mathbb{R}^m$.
We take ($|\vartheta|,\widetilde u=U\frac{\widetilde c}{\|\widetilde c\|},\
\widetilde v=V\frac{\widetilde d}{\|\widetilde d\|})$ to
approximate $(\sigma,u^{*},v^{*})$ if $\widetilde u^T A\widetilde v\geq 0$;
otherwise, we take ($|\vartheta|,\widetilde u=U\frac{\widetilde c}{\|\widetilde c\|},\
\widetilde v=-V\frac{\widetilde d}{\|\widetilde d\|})$.

If $(|\vartheta|,\widetilde u,\widetilde v)$ has not yet converged,
we need to expand the subspaces $\mathcal{U}$ and $\mathcal{V}$.
Let $u=\widetilde u$ and $v=\widetilde v$,
and replace $|\vartheta|$ by the Rayleigh quotient
\begin{equation}\label{equation:30}
\theta=\rho=|\widetilde u^TA\widetilde v|=
\frac{|\widetilde c^TU^TAV\widetilde d|}{\|\widetilde c\|
\|\widetilde d\|}=\frac{|\widetilde c^TH\widetilde d|}
{\|\widetilde c\|\|\widetilde d\|}.
\end{equation}
Then we obtain a consistent equation \eqref{equation:1} with
$r=r(\rho,\widetilde u,\widetilde v)$. We use the solution vectors
$s$ and $t$ to compute the expansion vectors
$u_+$ and $v_+$ defined by \eqref{exvector} and obtain
$\mathcal{U}_{+}$ and $\mathcal{V}_{+}$. The new projection subspace
is spanned by the columns of
$\begin{bmatrix}\begin{smallmatrix}U_+&\\&V_+
\end{smallmatrix}\end{bmatrix}$, and its dimension is thus increased by two.
The resulting method is called the harmonic Jacobi-Davidson SVD (HJDSVD) method.

We mention in passing that in the JD type methods applied to the eigenvalue
problem of $\begin{bmatrix} \begin{smallmatrix}0 &A\\A^T&0
\end{smallmatrix} \end{bmatrix}$ directly,
the current projection subspace is spanned by the orthonormal columns
of $[U^T,V^T]^T$
and the expanded one is spanned by those of $[U_+^T,V_+^T]^T$,
whose dimension
is increased by one. Another mathematically essential distinction is that the
whole expanding vector $[u_+^T,v_+^T]^T$ is obtained by orthonormalizing
$[s^T,t^T]^T$ against the column orthonormal $[U^T,V^T]^T$,
where $[s^T,t^T]^T$ is the solution to a corresponding correction
equation in the {\em different} form from \eqref{equation:1}.

It is easily justified by the optimality of the Rayleigh quotient
that $\rho$ is more accurate than $|\vartheta|$ in the sense that
$\|r(\rho,\widetilde u,\widetilde  v)\|\leq \|r(|\vartheta|,\widetilde u,
\widetilde v)\|$. Notice that in computation, by \eqref{equation:30}
we will take
$\widetilde u=U\frac{\widetilde c}{\|\widetilde c\|},\
\widetilde v=V\frac{\widetilde d}{\|\widetilde d\|}$
for $\widetilde c^T H\widetilde d\geq 0$ and
$\widetilde u=U\frac{\widetilde c}{\|\widetilde c\|},\
\widetilde v=-V\frac{\widetilde d}{\|\widetilde d\|}$
for $\widetilde c^T H\widetilde d<0$. This guarantees that
$\rho=\widetilde u^TA\widetilde v \geq 0$ and that $\widetilde u$ and
$\widetilde v$ are correct approximate left and right singular vectors.

\begin{algorithm}\label{algorithm:1}
\caption{HJDSVD and RHJDSVD methods with the fixed target $\tau$}
\begin{algorithmic}[1]
\STATE{\textbf{Input:\ }Devices to compute $A\underline v$ and
$A^T\underline u$ for arbitrary vectors $\underline u$ and
$\underline v$, unit-length starting vectors $u_0$ and $v_0$,
the target $\tau$ and convergence tolerance $tol$.}

\STATE{\textbf{Output:\ } A converged approximation $(\theta,  u ,  v )$
 to the desired
 $(\sigma, u^{*},v^{*})$ with $\sigma$ closest to $\tau$ satisfying
 $$
\|r\|= \left\|\begin{bmatrix}\begin{smallmatrix}A  v-
\theta   u\\ A^T  u-\theta   v\end{smallmatrix}\end{bmatrix}
\right\|\leq \|A\|_1\cdot tol.
 $$}

\STATE{Initialization:\ $u_{+}=u_0$, $v_{+}=v_0$; $U =[\ ]$, $V =[\ ]$.}

\FOR{$m=1,2,\ldots$}


\STATE{Set $U =[U, u_{+}]$, $V=[V,v_{+}]$,
 and update $H =U^TAV$, $G^{(1)}=U^TAA^TU$, $G^{(2)}=V^TA^TAV$, $F=\begin{bmatrix}\begin{smallmatrix}-\tau I_m&H\\H^T&-\tau I_m
 \end{smallmatrix}\end{bmatrix}$
 and $G=\begin{bmatrix}\begin{smallmatrix}G^{(1)}+
\tau^2I_m&-2\tau H\\-2\tau H^T&G^{(2)}+\tau^2I_m\end{smallmatrix}\end{bmatrix}$.}

\STATE{Compute the largest eigenvalue $\frac{1}{\nu}$ in magnitude of $(F,G)$,
its associated eigenvector $\widetilde f=\begin{bmatrix}\begin{smallmatrix}
\widetilde c^T,\ \widetilde d^T\end{smallmatrix}\end{bmatrix}^T$
and $\rho=\frac{|\widetilde c^TH\widetilde d|}{\|\widetilde c\|
\|\widetilde d\|}$.}


\STATE{RHJDSVD: Form $G^{\prime}=\begin{bmatrix}\begin{smallmatrix}G^{(1)}+\rho^2I_m&-2\rho H\\
-2\rho H^T &G^{(2)}+\rho^2I_m\end{smallmatrix}\end{bmatrix}$. Compute the eigenvector $\widehat f=\begin{bmatrix}\begin{smallmatrix}
\widehat c^T,\widehat d^T\end{smallmatrix}\end{bmatrix}^T$ corresponding to the smallest eigenvalue of $G^{\prime}$ and $\rho^{\prime}=
\tfrac{|\widehat c^TH\widehat d|}{\|\widehat c\|\|\widehat d\|}$.}

\STATE{Compute the approximate singular triplet
\small{$$(\theta,u,v)=\left\{\begin{aligned}
(\rho, \widetilde u=U\tfrac{\widetilde c}{\|\widetilde c\|},\widetilde v=
V\tfrac{\widetilde d}{\|\widetilde d\|}(\mbox{or} -
V\tfrac{\widetilde d}{\|\widetilde d\|})) \quad  \quad &\mbox{for HJDSVD,}\\
(\rho^{\prime}, \widehat u=U\tfrac{\widehat c}{\|\widehat c\|},
\widehat v=V\tfrac{\widehat d}{\|\widehat d\|}(\mbox{or}-V\tfrac{\widehat d}
{\|\widehat d\|})) \quad  \quad & \mbox{for RHJDSVD,}
\end{aligned}\right.$$}
as well as the residual $r=\begin{bmatrix}\begin{smallmatrix}A v-
 \theta u\\ A^T  u-\theta v
 \end{smallmatrix}\end{bmatrix}$.}

\STATE{\textbf{if} $\|r\|\leq \|A\|_1\cdot tol$ \textbf{then} return $(\theta,u,v)$ and stop.}

\STATE{Solve the correction equation
 \begin{displaymath}
 \begin{bmatrix}\begin{matrix}I_M-  u  u^T& \\ &
 I_N-  v  v^T\end{matrix}\end{bmatrix}
 \begin{bmatrix}\begin{matrix}-\tau I_M & A\\A^T& -
 \tau I_N\end{matrix}\end{bmatrix}
 \begin{bmatrix}\begin{matrix}I_M-  u  u^T& \\ &
 I_N-  v  v^T\end{matrix}\end{bmatrix}
 \begin{bmatrix}\begin{matrix}s\\t\end{matrix}\end{bmatrix}=-r,
 \end{displaymath}
 where $(s,t)\perp\perp(  u,  v)$.}
\STATE{Orthonormalize $s$ and $t$ against $U$ and $V$ to get $u_{+}$ and $v_{+}$.}
\ENDFOR
\end{algorithmic}
\end{algorithm}

\subsection{Refined harmonic Jacobi-Davidson SVD method}\label{subsection:RHJDSVD}

We have already obtained an approximate singular value $\rho$ of $A$
using the HJDSVD method. With $\rho$ available,
we can use the refined extraction to compute new and better approximations
to $u^{*}$ and $v^{*}$. Such approach is called the refined
harmonic extraction, which seeks a unit vector
$\widehat w=\begin{bmatrix}\begin{smallmatrix}U&\\
&V\end{smallmatrix}\end{bmatrix}\widehat f\in \mathbb{R}^{M+N}$ such that
\begin{equation}\label{equation:19}
\|C\widehat w-\rho \widehat w\|=\min_{w\in span\{U,V\}, \|w\|=1}\|Cw-\rho w\|,
\end{equation}
where $C$ is defined as \eqref{augment} and
$\mbox{span}\{U,V\}$ denotes the range of
$\begin{bmatrix}\begin{smallmatrix}U&\\&V\end{smallmatrix}\end{bmatrix}$.

For the approximate eigenvector $\widetilde w$ obtained by HJDSVD,
it has been proved by Jia \cite{jia2004some}
that $\|(C-\rho I)\widehat w\|<\|(C-\rho I)\widetilde w\|$, provided that
$(\rho, \widetilde w)$ is not an exact eigenpair of $C$; moreover,
if $(\rho, \widetilde w)$ is not an exact eigenpair of $C$ and
there is another harmonic Ritz value of $C$ close to $\rho$,
then it may occur that
\begin{displaymath}
\|(C-\rho I)\widehat w\|\ll\|(C-\rho I)\widetilde w\|,
\end{displaymath}
meaning that $\widehat w$ can be a much more accurate approximation to
$w^{*}=\frac{1}{\sqrt2}\begin{bmatrix}\begin{smallmatrix}
u^{*}\\v^{*}\end{smallmatrix}\end{bmatrix}$.

(\ref{equation:19}) is equivalent to
\begin{equation}\label{formulate}
\left\|\begin{bmatrix} -\rho U &AV\\A^TU&-\rho V \end{bmatrix}
\widehat f\right\|=\min_{f\in\mathbb{R}^{2m}, \|f\|=1}\left\|
\begin{bmatrix} -\rho U &AV\\A^TU&-\rho V \end{bmatrix}f\right\|.
\end{equation}
Therefore, $\widehat f$ is the right singular vector of $D=\begin{bmatrix}
\begin{smallmatrix}-\rho U &AV\\A^TU&-\rho V\end{smallmatrix}\end{bmatrix}$
corresponding to its smallest singular value, that is,
$\widehat f$ is an eigenvector of
\begin{equation}\label{gprime}
G^{\prime}=D^TD=\begin{bmatrix} U^TAA^TU+\rho^2I_m&-2\rho H\\
-2\rho H^T &V^TA^TAV+\rho^2I_m
 \end{bmatrix}
\end{equation}
associated with its smallest eigenvalue. Jia \cite{jia2006using} proves
that the smallest singular value and associated right singular
vector $\widehat f$ of $D$ can be computed accurately to the working precision by
applying the QR algorithm to the eigenvalue problem of
the cross-product matrix $G^{\prime}$,
provided that the smallest singular
value of $D$ is not close to its second smallest one. So instead
of computing the SVD of $D$, we work on $G^{\prime}$ and
compute its eigenvector $\widehat f$. In this way,
we reduce the cost substantially by solving a small $2m\times 2m$ matrix
eigenvalue problem of $G^{\prime}$ other than computing SVD of the $(M+N)\times 2m$
matrix $D$.

Let $\widehat f=\begin{bmatrix}\begin{smallmatrix}\widehat c^T,\
\widehat d^T\end{smallmatrix}
\end{bmatrix}^T$, where
$\widehat c,\ \widehat d\in\mathbb{R}^{m}$.
We take $\widehat u=U\frac{\widehat c}{\|\widehat c\|}$
and $\widehat v=V\frac{\widehat d}{\|\widehat d\|}$ (or $\widehat v=-
V\frac{\widehat d}{\|\widehat d\|}$ ) as new approximations
to $u^{*}$ and $v^{*}$, respectively.

If $(\rho,\widehat u,\widehat v)$
has not yet converged, we need to expand $\mathcal{U}$ and $\mathcal{V}$ to
$\mathcal{U}_{+}$ and $\mathcal{V}_{+}$
in the same manner as done in HJDSVD.
Since the consistency of the correction equation \eqref{equation:1},
where $u=\widehat u$ and $v=\widehat v$,
requires the double orthogonality
of the residual $r$, we take
the Rayleigh quotient
\begin{equation} \label{equation:31}
\theta=\rho^{\prime}=|\widehat u^TA\widehat v|=\frac{|\widehat c^TU^TAV\widehat d|}
{\|\widehat c\|\|\widehat d\|}=\frac{|\widehat c^TH\widehat d|}{\|\widehat c\|
\|\widehat d\|}
\end{equation}
and compute the residual $r=r(\rho^{\prime},\widehat u,\widehat{v})$.
The solution vectors $s$ and $t$ are then used to expand $\mathcal{U}$
and $\mathcal{V}$, respectively, and the resulting method
is called the refined harmonic Jacobi-Davidson SVD (RHJDSVD) method.

Similar to HJDSVD, we take $\widehat u=U\frac{\widehat c}{\|\widehat c\|}$
and $\widehat v=V\frac{\widehat d}{\|\widehat d\|}$ for
$\widehat c^TH\widehat d\geq 0$ and $\widehat u=U\frac{\widehat c}{\|\widehat c\|}$,
$\widehat v=-V\frac{\widehat d}{\|\widehat d\|}$ for
$\widehat c^TH\widehat d<0$, which guarantees that $\rho^{\prime}=
\widehat u^TA\widehat v\geq 0$ and that $\widehat u$ and
$\widehat v$ are correct approximate left and right singular vectors.

Algorithm \ref{algorithm:1} describes the HJDSVD and RHJDSVD methods,
where, and in the next section, to unify our presentation, we denote by $(\theta, u,v)$
the current approximation $(\rho,\widetilde u,\widetilde v)$ or
$(\rho^{\prime},\widehat u,\widehat v)$  to $(\sigma, u^{*}, v^{*})$.

\section{A convergence analysis}\label{section:3}

In this section, we make a convergence analysis on the inexact
HJDSVD and RHJDSVD methods.
We should stress that in the analysis of this section, we
have no restriction to the way that an approximate singular triplet
$(\theta,u,v)$ is extracted, which means that all our analysis of this section
is applicable to the standard and refined extraction based JDSVD methods.
Before proceeding, we must point out that
our analysis is asymptotic and a rigorous one-step one, i.e., it is a local other
than global analysis since we must assume that the current $(\theta,u,v)$ is
reasonably good but not yet converged, whose precise meaning will be clear later.

Making use of the fact that
$\begin{bmatrix}\begin{smallmatrix}I_M-uu^T& \\ &I_N-vv^T\end{smallmatrix}\end{bmatrix}
\begin{bmatrix}\begin{smallmatrix}s\\t\end{smallmatrix}\end{bmatrix}
=\begin{bmatrix}\begin{smallmatrix}s \\ t \end{smallmatrix}\end{bmatrix}$,
we transform (\ref{equation:1}) into
\begin{eqnarray}\label{equation:20}
\begin{bmatrix} -\tau I_M  & A\\A^T& -\tau I_N \end{bmatrix}
\begin{bmatrix} s\\t \end{bmatrix} &
=& \begin{bmatrix} uu^T&\\ & vv^T \end{bmatrix}
\begin{bmatrix} -\tau I_M  & A\\A^T&  -\tau I_N \end{bmatrix}
\begin{bmatrix} s\\t \end{bmatrix}
- \begin{bmatrix} Av-\theta u\\ A^Tu-\theta v \end{bmatrix} \nonumber\\
&=& \begin{bmatrix} (u^TAt - \tau u^Ts)u\\ (v^TA^Ts - \tau v^Tt)v \end{bmatrix} -
\begin{bmatrix} -\tau I_M & A\\ A^T& -\tau I_N \end{bmatrix}
\begin{bmatrix} u\\ v \end{bmatrix}
 + \begin{bmatrix} (\theta - \tau)u\\ (\theta - \tau)v \end{bmatrix}\nonumber\\
&=& - \begin{bmatrix} -\tau I_M  & A\\ A^T&  -\tau I_N
 \end{bmatrix}
\begin{bmatrix} u\\ v \end{bmatrix} +
\begin{bmatrix} (\theta - \tau + u^TAt)u\\
(\theta - \tau + v^TA^Ts)v
 \end{bmatrix}
\end{eqnarray}
with the last equation holding since $(s,t)\perp\perp (u,v$) means $u^Ts=v^Tt=0$.

Assume that $\tau>0$ is not a singular value of $A$. Since the eigenvalues of
$\begin{bmatrix}\begin{smallmatrix}-\tau I_M &A \\ A^T&-\tau I_N \end{smallmatrix}\end{bmatrix}$
 are $\pm\sigma_i-\tau,\ i=1,2,\ldots,N$ and $-\tau$,
$\begin{bmatrix}\begin{smallmatrix}-\tau I_M &A \\ A^T&-\tau I_N \end{smallmatrix}\end{bmatrix}$
is nonsingular.
Introduce the matrix
\begin{equation}\label{defB}
B=\begin{bmatrix} -\tau I_M & A\\A^T& -\tau I_N \end{bmatrix} ^{-1}.
\end{equation}
Then
\eqref{equation:20} means that the exact solution to (\ref{equation:1}) is
\begin{equation}\label{equation:3}
\begin{bmatrix} s\\t \end{bmatrix}
=-\begin{bmatrix} u\\v \end{bmatrix}+B
\begin{bmatrix} \alpha u\\ \beta v \end{bmatrix}
\quad\mbox{with} \quad
\left\{\begin{aligned}
&\alpha=\theta-\tau+u^TAt,\\
&\beta=\theta-\tau+v^TA^Ts.
\end{aligned}\right.
\end{equation}


Let $(\widetilde s,\widetilde t)$ be an approximate solution
of ($\ref{equation:1})$, and define its relative error by
\begin{equation}\label{equation:2}
 \varepsilon =\frac{
 \left\|\begin{bmatrix} \widetilde{s}\\ \widetilde {t} \end{bmatrix}-
 \begin{bmatrix}  s\\t   \end{bmatrix}\right\|}{\sqrt{\|s\|^2+\|t\|^2}}.
\end{equation}
Then we can write
\begin{displaymath}
 \begin{bmatrix} \widetilde{s}\\ \widetilde {t} \end{bmatrix}
 =\begin{bmatrix} s\\t \end{bmatrix}+\varepsilon \left\|
 \begin{bmatrix}  s\\t \end{bmatrix}\right\|
 \begin{bmatrix} g\\h \end{bmatrix},
 \end{displaymath}
 where $[g^T,h^T]^T$ is the normalized error direction vector with
 $\|g\|^2+\|h\|^2=1$. As a result, we obtain
 \begin{equation}\label{equation:5}
 \begin{bmatrix} (I_M-P_U)\widetilde{s}\\(I_N-P_V)\widetilde{t}\end{bmatrix}=
 \begin{bmatrix} (I_M-P_U) {s}\\(I_N-P_V){t} \end{bmatrix}+\varepsilon
 \left\|\begin{bmatrix} s\\t \end{bmatrix}\right\|
 \begin{bmatrix} g_{\perp}\\h_{\perp} \end{bmatrix},
 \end{equation}
 where $g_{\perp}=(I_M-P_U)g$ and $h_{\perp}=(I_N-P_V)h$. Define
 \begin{equation}\label{expansion}
 \widetilde{u}_{+}=\frac{(I_M-P_U)\widetilde s}{\|(I_M-P_U)\widetilde s\|}
 \quad  \mbox{and}  \quad
 \widetilde{v}_{+}=\frac{(I_N-P_V)\widetilde t}{\|(I_N-P_V)\widetilde t\|},
\end{equation}
 which are the normalized expansion vectors used to expand the current
 subspaces $\mathcal{U}$ and $\mathcal{V}$, respectively.

We measure the differences between $(I_M-P_U)\widetilde s$ and $(I_M-P_U)s$
and between $(I_N-P_V)\widetilde t$ and $(I_N-P_V)t$ by the relative errors
 \begin{eqnarray}
 \widetilde{\varepsilon}_s&=&\frac{\|(I_M-P_U)\widetilde s-(I_M-P_U)s\|}{\|(I_M-P_U)  s\|},
 \label{var1}\\
 \widetilde{\varepsilon}_t&=&\frac{\|(I_N-P_V)\widetilde t-(I_N-P_V)t\|}{\|(I_N-P_V)  t\|},
 \label{var2}
 \end{eqnarray}
 respectively, or, equivalently, by the sines $\sin \angle (\widetilde u_{+},u_{+})$
 and $\sin \angle (\widetilde v_{+},v_{+})$. As a matter of fact,
 from Lemma 1 of \cite{jia2014inner}, it holds that
 \begin{align*}
& \sin \angle (\widetilde u_{+},u_{+})=\widetilde{\varepsilon}_s
\sin\angle (\widetilde u_{+},g_{\perp}),\\
& \sin\angle (\widetilde v_{+},v_{+})=\widetilde{\varepsilon}_t
\sin\angle(\widetilde v_{+},h_{\perp}).
\end{align*}
Note that $\sin \angle (\widetilde u_{+},g_{\perp})$ and
$\sin\angle (\widetilde v_{+},h_{\perp})$ are generally modest since
$\widetilde u_{+}$ and $\widetilde v_{+}$ lie
in the orthogonal complements of $\mathcal{U}$ and $\mathcal{V}$, respectively,
and $g_{\perp}$ and $h_{\perp}$ are general vectors in the corresponding
orthogonal complements. The above two relations show that
$\sin \angle (\widetilde u_{+},u_{+})$ and
$\sin\angle (\widetilde v_{+},v_{+})$ play the same role
as $\widetilde{\varepsilon}_s$ and $\widetilde{\varepsilon}_t$ when measuring
the differences between the inexact and exact expansion vectors.

Denote $\widetilde U_{+}=[U, u_{+}]$ and $\widetilde V_{+}=[V, v_{+}]$,
whose columns form orthonormal bases of
$\widetilde{\mathcal{U}}_{+}=span\{\widetilde U_{+}\}$ and
$\widetilde{\mathcal{V}}_{+}=span\{\widetilde V_{+}\}$, respectively.
In order to make the inexact JDSVD methods mimic the exact JDSVD methods
well, a {\em necessary} condition is that the two pairs of expanded subspaces
$\widetilde{\mathcal{U}}_{+}$, $\widetilde{\mathcal{V}}_{+}$ and
$\mathcal{U}_{+}$, $\mathcal{V}_{+}$ have very comparable qualities,
namely, $\sin\angle(\widetilde {\mathcal{U}}_{+},u^{*})\approx
\sin\angle( {\mathcal{U}}_{+},u^{*})$ and $\sin\angle
(\widetilde{\mathcal{V}}_{+},v^{*})\approx\sin\angle({\mathcal{V}}_{+},v^{*})$.
This condition is also {\em sufficient} for the inexact and exact RHJDSVD, though
it is not for the inexact and exact HJDSVD \cite{jia2015harmonic,wu17}. We
will describe the precise meaning of '$\approx$' afterwards. The following lemma
is adapted from Theorem 4.1 of \cite{jia2014inner}.

\begin{lemma}\label{lemma1}
Assume that $\sin\angle(u_{+},u^{*}_{\perp})\not=0$ and
$\sin\angle(v_{+},v^{*}_{\perp})\not=0$
with $u^{*}_{\perp}=(I_M-P_{U})u^{*}$ and
$v^{*}_{\perp}=(I_N-P_{V})v^{*}$\footnote{If either condition fails to
hold, it is seen from (\ref{equation:32}) that
$\sin\angle(\mathcal{U}_{+},u^{*})=0$
or $\sin\angle(\mathcal{V}_{+},v^{*})=0$. In this case, $\mathcal{U}_{+}$
contains $u^{*}$ or $\mathcal{V}_{+}$ contains $v^{*}$, that is,
$\mathcal{U}_{+}$ or $\mathcal{V}_{+}$
is already the best, and the expansion of
the other one subspace still satisfies
the lemma.}, and let $\widetilde{\varepsilon}_s$ and
$\widetilde{\varepsilon}_t$ be defined by \eqref{var1} and
\eqref{var2}. Then we have
\begin{equation}\label{equation:32}\begin{aligned}
&\sin\angle(\mathcal{U}_{+},u^{*})=\sin\angle(\mathcal{U},u^{*})
\sin\angle(u_{+},u^{*}_{\perp}),\\
&\sin\angle(\mathcal{V}_{+},v^{*})=\sin\angle(\mathcal{V},v^{*})
\sin\angle(v_{+},v^{*}_{\perp}),
\end{aligned}
\end{equation}
and
\begin{equation}\label{relation}
\frac{\sin\angle(\widetilde{\mathcal{U}}_{+},u^{*})}
{\sin\angle(\mathcal{U}_{+},u^{*})} =\frac{\sin
\angle(\widetilde u_{+},u^{*}_{\perp})}{\sin\angle(u_{+},u^{*}_{\perp})},  \quad  \quad
\frac{\sin\angle(\widetilde{\mathcal{V}}_{+},v^{*})}
{\sin\angle(\mathcal{V}_{+},v^{*})} =\frac{\sin\angle
(\widetilde v_{+},v^{*}_{\perp})}{\sin\angle(v_{+},v^{*}_{\perp})}.
\end{equation}
If $\tau_s=\frac{2\widetilde{\varepsilon}_s}{\sin\angle(u_{+},u^{*}_{\perp})}<1$ and
$\tau_t=\frac{2\widetilde{\varepsilon}_t}{\sin\angle(v_{+},v^{*}_{\perp})}<1$, we have
\begin{equation}\label{equation:33}\begin{aligned}
&1-\tau_s\leq\frac{\sin\angle(\widetilde{\mathcal{U}}_{+},u^{*})}
{\sin\angle(\mathcal{U}_{+},u^{*})}\leq 1+\tau_s,\\
&1-\tau_t\leq\frac{\sin\angle(\widetilde{\mathcal{V}}_{+},v^{*})}
{\sin\angle(\mathcal{V}_{+},v^{*})}\leq 1+\tau_t.
\end{aligned}\end{equation}
\end{lemma}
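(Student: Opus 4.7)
The plan is to prove the three claims of the lemma in sequence: (\ref{equation:32}) is the substantive identity, (\ref{relation}) falls out immediately by division, and (\ref{equation:33}) requires a short perturbation step in addition.

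First I would prove (\ref{equation:32}). Since $u_+\perp\mathcal{U}$ by construction, the columns of $U_+=[U,u_+]$ are orthonormal and the orthogonal projector onto $\mathcal{U}_+$ splits as $P_{U_+}=P_U+u_+u_+^T$. Writing $u^{*}=P_Uu^{*}+u^{*}_\perp$ and using $u_+^TP_Uu^{*}=0$, a direct computation gives $(I_M-P_{U_+})u^{*}=u^{*}_\perp-(u_+^Tu^{*}_\perp)u_+$, hence
\[
\|(I_M-P_{U_+})u^{*}\|^2=\|u^{*}_\perp\|^2-(u_+^Tu^{*}_\perp)^2=\|u^{*}_\perp\|^2\sin^2\angle(u_+,u^{*}_\perp).
\]
Because $\|u^{*}\|=1$ one has $\|u^{*}_\perp\|=\sin\angle(\mathcal{U},u^{*})$, and taking square roots gives the first identity of (\ref{equation:32}); the second is obtained identically with $V$, $v_+$, $v^{*}_\perp$. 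The hypothesis $\sin\angle(u_+,u^{*}_\perp)\neq 0$ is used only to keep the formula nondegenerate, as noted in the footnote.

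Second, I would derive (\ref{relation}) by applying (\ref{equation:32}) twice: to $\mathcal{U}_+$ built from $\mathcal{U}$ and $u_+$, and to $\widetilde{\mathcal{U}}_+$ built from $\mathcal{U}$ and $\widetilde u_+$. The common factor $\sin\angle(\mathcal{U},u^{*})$ cancels, leaving
\[
\frac{\sin\angle(\widetilde{\mathcal{U}}_+,u^{*})}{\sin\angle(\mathcal{U}_+,u^{*})}=\frac{\sin\angle(\widetilde u_+,u^{*}_\perp)}{\sin\angle(u_+,u^{*}_\perp)},
\]
and likewise for the $v$-side. No new estimates are needed.

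Third, for (\ref{equation:33}) I would bound the ratio above. Using the identity quoted immediately after (\ref{expansion}), namely $\sin\angle(\widetilde u_+,u_+)=\widetilde\varepsilon_s\sin\angle(\widetilde u_+,g_\perp)\leq\widetilde\varepsilon_s$, together with the standard triangle inequality for angles between unit vectors under the acute-angle convention (so that $\sin$ is monotone on all angles at hand), I would obtain $|\sin\angle(\widetilde u_+,u^{*}_\perp)-\sin\angle(u_+,u^{*}_\perp)|\leq\sin\angle(\widetilde u_+,u_+)\leq\widetilde\varepsilon_s$. Dividing by $\sin\angle(u_+,u^{*}_\perp)$ and invoking $\tau_s=2\widetilde\varepsilon_s/\sin\angle(u_+,u^{*}_\perp)$ yields
\[
\left|\frac{\sin\angle(\widetilde{\mathcal{U}}_+,u^{*})}{\sin\angle(\mathcal{U}_+,u^{*})}-1\right|\leq\frac{\tau_s}{2}<\tau_s,
\]
which is (\ref{equation:33}); the $v$-side is symmetric.

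The main obstacle I anticipate is cosmetic rather than structural: the factor of two in the definition of $\tau_s$ leaves visible slack in the bound, suggesting the author may wish to keep room for secondary contributions (for instance the normalization correction $\|(I_M-P_U)\widetilde s\|$ versus $\|(I_M-P_U)s\|$ in passing between $\widetilde u_+$ and $u_+$, which one could handle via $\sin\angle(\widetilde u_+,u_+)\leq\widetilde\varepsilon_s/(1-\widetilde\varepsilon_s)$). The hypothesis $\tau_s<1$ forces $\widetilde\varepsilon_s<\tfrac{1}{2}\sin\angle(u_+,u^{*}_\perp)\leq\tfrac{1}{2}$, which is more than enough to justify those refinements and keep the ratio safely inside $1\pm\tau_s$, so the bookkeeping is routine.
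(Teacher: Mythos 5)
Your proposal is correct, and it is worth noting that the paper itself contains no proof of this lemma: it is stated as ``adapted from Theorem 4.1 of \cite{jia2014inner}'' and justified by citation only, so your argument supplies the self-contained proof that the paper delegates to Jia and Li. Your route is the natural one and parallels the cited result: the projector splitting $P_{U_{+}}=P_U+u_{+}u_{+}^T$ (valid because $u_{+}\perp\mathcal{U}$ by construction, and equally valid for $\widetilde u_{+}$) gives \eqref{equation:32} exactly as you compute, and \eqref{relation} follows by cancelling the common factor $\sin\angle(\mathcal{U},u^{*})$. For \eqref{equation:33}, your chain is sound, with two small points worth making explicit. First, the inequality $|\sin\angle(\widetilde u_{+},u^{*}_{\perp})-\sin\angle(u_{+},u^{*}_{\perp})|\leq\sin\angle(\widetilde u_{+},u_{+})$ needs slightly more than monotonicity of $\sin$, since the sum of two acute angles may exceed $\pi/2$; the clean fix is the identity $\sin\angle(a,b)=\|aa^T-bb^T\|$ for unit vectors, after which the claim is just the triangle inequality for the spectral norm (or one checks directly that $\sin x+\sin y\geq 1$ whenever $x+y\geq\pi/2$ with $x,y\in[0,\pi/2]$). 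Second, the normalization worry in your closing paragraph is unfounded: the identity $\sin\angle(\widetilde u_{+},u_{+})=\widetilde{\varepsilon}_s\sin\angle(\widetilde u_{+},g_{\perp})$ quoted after \eqref{expansion} is exact because the residual angle is measured against $\widetilde u_{+}$, so no correction of the form $\widetilde{\varepsilon}_s/(1-\widetilde{\varepsilon}_s)$ is needed, and $\sin\angle(\widetilde u_{+},u_{+})\leq\widetilde{\varepsilon}_s$ holds outright. Consequently your argument actually proves the sharper two-sided bound
\begin{equation*}
1-\tfrac{\tau_s}{2}\leq\frac{\sin\angle(\widetilde{\mathcal{U}}_{+},u^{*})}
{\sin\angle(\mathcal{U}_{+},u^{*})}\leq 1+\tfrac{\tau_s}{2},
\end{equation*}
which implies \eqref{equation:33} with room to spare; the factor $2$ in the definition of $\tau_s$ is an artifact of the looser estimation in the cited theorem, not a necessity of the statement.
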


(\ref{equation:32}) indicates that
$\sin\angle(u_{+},u^{*}_{\perp})$ and $\sin\angle(v_{+},v^{*}_{\perp})$
are precisely the decreasing factors of one step subspace improvements
when $\mathcal{U}$ and $\mathcal{V}$ are expanded to
$\mathcal{U}_{+}$ and $\mathcal{V}_{+}$, respectively.
\eqref{relation} establishes the accuracy relationships between
the exact and inexact expanded subspaces.
(\ref{equation:33}) shows that, in order to make $\sin\angle
(\widetilde{\mathcal{U}}_{+},u^{*})\approx\sin
\angle( {\mathcal{U}}_{+},u^{*})$ and $\sin\angle
(\widetilde{\mathcal{V}}_{+},v^{*})\approx\sin\angle({\mathcal{V}}_{+},v^{*})$,
$\tau_s$ and $\tau_t$ should be considerably smaller than one.
Clearly, fairly small $\tau_s$
and $\tau_t$, e.g., $\tau_s,\tau_t=0.001$ or $0.01$, are enough
since we will have
\begin{equation}\label{equation:48}
\frac{\sin\angle(\widetilde{\mathcal{U}}_{+},u^{*})}
{\sin\angle(\mathcal{U}_{+},u^{*})},\
\frac{\sin\angle(\widetilde{\mathcal{V}}_{+},v^{*})}
{\sin\angle(\mathcal{V}_{+},v^{*})}\in[0.999,1.001]  \ \mbox{or} \ [0.99,1.01]
\end{equation}
and the differences between the lower and upper bounds are marginal,
which means that
$\widetilde{\mathcal{U}}_{+}$ and $\widetilde{\mathcal{V}}_{+}$ have
essentially the same quality as $\mathcal{U}_{+}$ and $\mathcal{V}_{+}$.
In the sense of \eqref{equation:48}, we claim that
 $\sin\angle(\widetilde {\mathcal{U}}_{+},u^{*})\approx
\sin\angle( {\mathcal{U}}_{+},u^{*})$ and $\sin\angle
(\widetilde{\mathcal{V}}_{+},v^{*})\approx\sin\angle({\mathcal{V}}_{+},v^{*})$.

From the definition of $\tau_s$ and $\tau_t$, we have
\begin{equation}\label{equation:49}
\widetilde{\varepsilon}_s=\frac{\tau_s}{2}\sin\angle(u_{+},u^{*}_{\perp}), \quad  \quad
\widetilde{\varepsilon}_t=\frac{\tau_t}{2}\sin\angle(v_{+},v^{*}_{\perp}).
\end{equation}
Notice that $\sin\angle(u_{+},u^{*}_{\perp})$ and $\sin\angle(v_{+},v^{*}_{\perp})$
are a-prior quantities and unknown during computation. For symmetric matrices,
the analysis of \cite{jia2014inner} has shown that the sizes of
$\sin\angle(u_{+},u^{*}_{\perp})$ and $\sin\angle(v_{+},v^{*}_{\perp})$
uniquely depend on the eigenvalue distribution of $B$, and the gap of the
desired singular value $\sigma^*$ and the other singular values of $A$:
the better $\sigma^*$ is separated from the others, the smaller these
two quantities, so that, by (\ref{equation:32}),
the more effectively the subspaces are expanded and the faster the JDSVD type
methods converge.

Generally speaking, we should not expect
that a practical SVD problem is too well conditioned,
that is, $\sin\angle(u_{+},u^{*}_{\perp})$ and $\sin\angle(v_{+},v^{*}_{\perp})$
are not very small. In applications we may well assume
that $\sin\angle(u_{+},u^{*}_{\perp})$ and $\sin\angle(v_{+},v^{*}_{\perp})$
are no smaller than $0.2$. We should be aware that the value 0.2 means
that $\sigma^*$ is quite well separated with the other singular values of $A$.
Indeed, it is instructive to see that $0.2^{15}\approx 3.3\times 10^{-11}$,
which is small enough and, by Lemma~\ref{lemma1}, means
that after {\em only} 15 outer iterations the subspaces $\mathcal{U}$ and $\mathcal{V}$
already contain sufficiently accurate approximations to the
desired singular vectors $u^*$ and $v^*$ since, at this time,
$\sin\angle (\mathcal{U},u^*)\leq 0.2^{15}\times \sin\angle (u_0,u^*)$
and $\sin\angle (\mathcal{V},v^*)\leq 0.2^{15}\times \sin\angle (v_0,v^*)$,
with $u_0$ and $v_0$ being starting vectors, are small enough. As we have elaborated,
fairly small $\tau_s,\tau_t\in [10^{-3},10^{-2}]$ will make the inexact and
exact expanded subspaces have the same quality, which should generally
make the inexact JDSVD methods behave like their exact counterparts, especially
for RHJDSVD, independent of the separation of $\sigma^*$ from the others.
As a result, suppose
that $\sin\angle(u_{+},u^{*}_{\perp})$ and $\sin\angle(v_{+},v^{*}_{\perp})$
are no smaller than $0.2$. In order to make the inexact
JDSVD methods mimic their exact counterparts well,
by \eqref{equation:48} and \eqref{equation:49} it is enough to take
\begin{equation}\label{tildevar}
\widetilde{\varepsilon}_s,\widetilde{\varepsilon}_t\in [10^{-4},10^{-3}]
\end{equation}
when $\sigma^*$ is well separated from the other singular values of $A$.

On the other hand, a very important point is that if
$\sin\angle(u_{+},u^{*}_{\perp})$  and $\sin\angle(v_{+},v^{*}_{\perp})$
are not small, that is, the SVD problem is not so well conditioned,
then JDSVD type methods may need many outer iterations to generate
sufficiently accurate subspaces
$\mathcal{U}$ and $\mathcal{V}$. In this case, in order to make the inexact
JDSVD methods mimic their exact counterparts well, by \eqref{equation:48}
and \eqref{equation:49}
we can {\em relax} $\widetilde{\varepsilon}_s$ and $\widetilde{\varepsilon}_t$
and take them {\em bigger} than those in \eqref{tildevar}, that is,
the more poorly is $\sigma^*$ separated from the others, the more easily
the inexact JDSVD mimics the exact JDSVD since we are allowed to take bigger
$\widetilde{\varepsilon}_s$ and $\widetilde{\varepsilon}_t$ than those in
\eqref{tildevar}, that is, we are allowed that $\widetilde{s}$ and $\widetilde{t}$
have poorer accuracy as approximations to $s$ and $t$.
In the sequel, we denote
\begin{equation}\label{widevarepsion}
\widetilde{\varepsilon}=\max\{\widetilde{\varepsilon}_s,\widetilde{\varepsilon}_t\}.
\end{equation}
The above analysis shows that
\begin{equation}\label{tildevarsize}
\widetilde{\varepsilon}\in [10^{-4}, 10^{-3}]
\end{equation}
is generally robust and reliable for the inexact JDSVD to mimic the exact JDSVD.

Our next goal is to derive relationships between the accuracy requirement
$\varepsilon$,  defined by (\ref{equation:2}), of
the approximate solution of the correction equation \eqref{equation:1}
and the accuracy $\widetilde\varepsilon$ of the expansion vectors
$\widetilde{s}$ and $\widetilde{t}$. Define
 \begin{equation}\label{hatvar}
 \widehat{\varepsilon}=\frac{\left\|\begin{bmatrix}\begin{smallmatrix}(I_M-P_U)
 \widetilde s\\ (I_N-P_V)\widetilde t\end{smallmatrix}\end{bmatrix}-
 \begin{bmatrix}\begin{smallmatrix}(I_M-P_U)  s\\ (I_N-P_V)  t
 \end{smallmatrix}\end{bmatrix}\right\|}{\left\|\begin{bmatrix}
 \begin{smallmatrix}(I_M-P_U)  s\\ (I_N-P_V)  t\end{smallmatrix}\end{bmatrix}\right\|},
 \end{equation}
which is the relative error of the {\em whole} inexact expansion vector
$[((I_M-P_U)\widetilde s)^T, ((I_N-P_V)\widetilde t)^T]^T$  and
the exact $[((I_M-P_U)s)^T,((I_N-P_V)t)^T]^T$. We have the following result.

\begin{lemma}\label{lemma2}
With $\widetilde{\varepsilon}$ and $\widehat{\varepsilon}$ defined by
\eqref{widevarepsion} and \eqref{hatvar}, it holds that
\begin{equation}\label{equation:35}
\widehat{\varepsilon}\leq \widetilde{\varepsilon}.
\end{equation}
\end{lemma}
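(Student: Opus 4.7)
The plan is to unpack both $\widehat{\varepsilon}$ and $\widetilde{\varepsilon}$ into their block-norm components and then apply a one-line inequality via the definition of the max.

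First I would introduce shorthand $a=\|(I_M-P_U)\widetilde s-(I_M-P_U)s\|$, $b=\|(I_N-P_V)\widetilde t-(I_N-P_V)t\|$, $c=\|(I_M-P_U)s\|$, $d=\|(I_N-P_V)t\|$, so that by \eqref{var1}--\eqref{var2} we have $\widetilde\varepsilon_s=a/c$ and $\widetilde\varepsilon_t=b/d$. The key observation, valid because the two blocks live in orthogonal coordinate subspaces of $\mathbb{R}^{M+N}$, is that the norm of a stacked vector equals the Euclidean combination of the norms of its blocks. Applying this to both the numerator and the denominator of \eqref{hatvar} gives
\begin{equation*}
\widehat{\varepsilon}=\frac{\sqrt{a^2+b^2}}{\sqrt{c^2+d^2}}.
\end{equation*}

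Next I would invoke the definition \eqref{widevarepsion}: since $a\leq\widetilde\varepsilon\,c$ and $b\leq\widetilde\varepsilon\,d$, squaring and adding yields $a^2+b^2\leq\widetilde\varepsilon^{\,2}(c^2+d^2)$, so that $\sqrt{a^2+b^2}\leq\widetilde\varepsilon\sqrt{c^2+d^2}$, which is exactly \eqref{equation:35}. A minor point to check is that the denominator $\sqrt{c^2+d^2}$ is nonzero, which follows from the standing assumption $\sin\angle(u_+,u^*_\perp)\neq 0$ and $\sin\angle(v_+,v^*_\perp)\neq 0$ used throughout this section (at least one of $c,d$ is strictly positive, and in fact generically both are).

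There is essentially no obstacle here; the lemma is a direct consequence of the Pythagorean structure of the block norm and the definition of $\widetilde\varepsilon$ as the maximum. The only mild subtlety, which I would mention explicitly for clarity, is that the inequality is typically strict unless $\widetilde\varepsilon_s=\widetilde\varepsilon_t$, which explains why $\widehat{\varepsilon}$ can be a genuinely tighter gauge of the joint expansion-vector error than either $\widetilde\varepsilon_s$ or $\widetilde\varepsilon_t$ individually; this observation will be useful when the bound is invoked later to translate inner-iteration accuracy into subspace-quality guarantees.
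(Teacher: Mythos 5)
Your proposal is correct and is essentially the paper's own proof: the paper likewise writes $\widehat{\varepsilon}$ via the Pythagorean block decomposition as $\sqrt{(\widetilde{\varepsilon}_s^2\,c^2+\widetilde{\varepsilon}_t^2\,d^2)/(c^2+d^2)}$ (in your notation) and bounds this weighted mean by the maximum $\widetilde{\varepsilon}$. Your added remarks (nonvanishing of the denominator, which is already implicit in the well-definedness of \eqref{var1}--\eqref{var2} and of the normalized expansion vectors, and strictness unless $\widetilde{\varepsilon}_s=\widetilde{\varepsilon}_t$) are sound but do not change the argument.
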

\begin{proof}
From \eqref{var1},
\eqref{var2} and \eqref{widevarepsion} we obtain
 \begin{align*}
 \widehat{\varepsilon}
 &=\sqrt{\frac{\|(I_M-P_U)(\widetilde s- s)\|^2+  \|(I_N-P_V)
 (\widetilde t-t)\|^2}{\|(I_M-P_U)s\|^2+ \|(I_N-P_V)t\|^2}}\\
 &=\sqrt{\frac{\widetilde\varepsilon_s^2\|(I_M-P_U)s\|^2+
 \widetilde\varepsilon_t^2\|(I_N-P_V)t\|^2}{\|(I_M-P_U)s\|^2+ \|(I_N-P_V)t\|^2}}\\
 &\leq\widetilde\varepsilon.
 \end{align*}
\end{proof}

With the help of this lemma, we can establish the first result on the relationship
between the solution accuracy $\varepsilon$ 
of correction equation \eqref{equation:1} and
$\widetilde{\varepsilon}$. 

\begin{theorem}\label{theorem:2}
 Let $(u,v)$ be the current approximation to $(u^{*},v^{*})$,
 $\varepsilon$ and $\widetilde{\varepsilon}$ be defined by \eqref{equation:2}
 and \eqref{widevarepsion}, and $\alpha$ and $\beta$ defined by
 \eqref{equation:3}. Then
 \begin{equation}\label{equation:25}
 \varepsilon\leq\frac{2\sqrt{\alpha^2+\beta^2}\sin_{\max}}
 {|\sigma-\tau|\left\|B\begin{bmatrix}
 \begin{smallmatrix} \alpha u\\ \beta v\end{smallmatrix}\end{bmatrix}-
 \begin{bmatrix}\begin{smallmatrix}u\\
 v\end{smallmatrix}\end{bmatrix}\right\|\sqrt{\|g_{\perp}\|^2+\|h_{\perp}\|^2}}
 \widetilde{\varepsilon},
 \end{equation}
 where
 \begin{equation}\label{sinmax}
 \sin_{\max}=\max\{\sin\varphi, \sin\psi\}
  \end{equation}
 with the acute angles
 \begin{equation*}
 \varphi=\angle(u,u^{*}), \quad  \quad  \psi=\angle(v,v^{*}).
 \end{equation*}
 \end{theorem}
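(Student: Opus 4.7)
The plan is to chain three estimates. First, I would connect $\varepsilon$ to the projected error $\widehat{\varepsilon}$ of (\ref{hatvar}). Taking the norm of the identity (\ref{equation:5}) yields
$$\left\|\begin{bmatrix}(I_M-P_U)(\widetilde s-s)\\(I_N-P_V)(\widetilde t-t)\end{bmatrix}\right\|=\varepsilon\left\|\begin{bmatrix}s\\t\end{bmatrix}\right\|\sqrt{\|g_\perp\|^2+\|h_\perp\|^2},$$
and dividing by the denominator in (\ref{hatvar}) produces an exact relation expressing $\widehat\varepsilon$ as the product of $\varepsilon$ with a known geometric ratio. Solving for $\varepsilon$ and invoking Lemma~\ref{lemma2} to upper bound $\widehat\varepsilon$ by $\widetilde\varepsilon$ reduces the theorem to the purely geometric inequality
$$\left\|\begin{bmatrix}(I_M-P_U)s\\(I_N-P_V)t\end{bmatrix}\right\|\leq\frac{2\sqrt{\alpha^2+\beta^2}\,\sin_{\max}}{|\sigma-\tau|}.$$

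Second, I would rewrite the left-hand side using the closed form $[s^T,t^T]^T=B[(\alpha u)^T,(\beta v)^T]^T-[u^T,v^T]^T$ from (\ref{equation:3}). Since $u\in\mathcal U$ and $v\in\mathcal V$, the block projector $\mathrm{diag}(I_M-P_U,I_N-P_V)$ annihilates $[u^T,v^T]^T$, so the remaining task reduces to bounding $\|\mathrm{diag}(I_M-P_U,I_N-P_V)\,B[\alpha u;\beta v]\|$ by the right-hand side above.

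Third, this estimate is the heart of the proof. I would split around the exact singular triplet,
$$\begin{bmatrix}\alpha u\\\beta v\end{bmatrix}=\begin{bmatrix}\alpha u^*\\\beta v^*\end{bmatrix}+\begin{bmatrix}\alpha(u-u^*)\\\beta(v-v^*)\end{bmatrix}.$$
For the perturbation piece I would use $\|B\|=1/|\sigma-\tau|$, which follows from (\ref{equation:43}) because the eigenvalues of $B^{-1}$ are $\pm\sigma_i-\tau$ and $-\tau$ with $|\sigma-\tau|$ the smallest among them in absolute value, together with the chord estimates $\|u-u^*\|\le\sqrt 2\sin\varphi$ and $\|v-v^*\|\le\sqrt 2\sin\psi$. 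For the singular-triplet piece I would exploit the explicit eigenstructure of $B$: the orthonormal pair $[u^*;v^*]/\sqrt 2$ and $[u^*;-v^*]/\sqrt 2$ are eigenvectors of $B$ with eigenvalues $1/(\sigma-\tau)$ and $-1/(\sigma+\tau)$. Expanding $[\alpha u^*;\beta v^*]$ in this basis gives $B[\alpha u^*;\beta v^*]$ in closed form, after which the block projector introduces the factors $\|(I_M-P_U)u^*\|\le\sin\varphi$ and $\|(I_N-P_V)v^*\|\le\sin\psi$, producing the $\sin_{\max}$ dependence once $\sqrt{\sin^2\varphi+\sin^2\psi}\le\sqrt 2\sin_{\max}$ is invoked.

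The main obstacle will be tracking the constant $2$ in this final bound rather than a loose multiple. It requires carefully grouping the $\alpha,\beta$ contributions through a Cauchy--Schwarz-type collapse of $|\alpha\tau+\beta\sigma|$ and $|\alpha\sigma+\beta\tau|$ against $\sqrt{\alpha^2+\beta^2}$, and exploiting $\sqrt{\sigma^2+\tau^2}\le\sigma+\tau$ so as to absorb the contribution from the eigenvalue $-1/(\sigma+\tau)$ of $B$ into the dominant $1/(\sigma-\tau)$ scaling; without this care one obtains only the looser constant $2\sqrt 2$.
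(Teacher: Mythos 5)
Your first two reductions coincide exactly with the paper's proof: combining \eqref{equation:5} with Lemma~\ref{lemma2} gives $\varepsilon\leq\frac{\left\|\left[((I_M-P_U)s)^T,((I_N-P_V)t)^T\right]^T\right\|}{\|[s^T,t^T]^T\|\sqrt{\|g_{\perp}\|^2+\|h_{\perp}\|^2}}\,\widetilde{\varepsilon}$, and after substituting \eqref{equation:3} and killing $[u^T,v^T]^T$ with the block projector, everything reduces to bounding the projected vector $B[\alpha u^T,\beta v^T]^T$, just as in \eqref{equation:24}. The genuine gap is in your third step. With the difference split $[\alpha u^T,\beta v^T]^T=[\alpha u^{*T},\beta v^{*T}]^T+[\alpha(u-u^*)^T,\beta(v-v^*)^T]^T$, the singular-vector piece can indeed be brought down to $\frac{\sin_{\max}}{|\sigma-\tau|}\sqrt{\alpha^2+\beta^2}$: your eigenbasis computation gives coefficients $a=\frac{\alpha\tau+\beta\sigma}{\sigma^2-\tau^2}$ and $b=\frac{\alpha\sigma+\beta\tau}{\sigma^2-\tau^2}$, and the joint estimate $a^2+b^2=\frac{(\alpha^2+\beta^2)(\sigma^2+\tau^2)+4\alpha\beta\sigma\tau}{(\sigma^2-\tau^2)^2}\leq\frac{\alpha^2+\beta^2}{(\sigma-\tau)^2}$ holds, which is just the statement that the inverse of the $2\times2$ matrix in \eqref{dimension2} has spectral norm $1/|\sigma-\tau|$. (Note that the per-coordinate Cauchy--Schwarz route you name, $|\alpha\tau+\beta\sigma|\leq\sqrt{\alpha^2+\beta^2}\sqrt{\sigma^2+\tau^2}$ together with $\sqrt{\sigma^2+\tau^2}\leq\sigma+\tau$, is itself lossy by $\sqrt2$, so it delivers precisely the $2\sqrt2$ you were trying to avoid.) But even granting the sharp bound on that piece, the perturbation piece is stuck at $\frac{1}{|\sigma-\tau|}\sqrt{\alpha^2\|u-u^*\|^2+\beta^2\|v-v^*\|^2}$ with $\|u-u^*\|=2\sin(\varphi/2)$, and the chord estimate $2\sin(\varphi/2)\leq\sqrt2\sin\varphi$ is tight at $\varphi=\pi/2$; so your route proves the theorem only with the constant $1+\sqrt2\approx 2.41$ in place of $2$, and no regrouping of $\alpha,\beta$ against $\sigma,\tau$ can repair this, because the loss sits in the chord-versus-sine discrepancy, not in the inversion of $B$.

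The repair is to abandon the difference split for the orthogonal decomposition the paper uses, $u=u^*\cos\varphi+p\sin\varphi$ and $v=v^*\cos\psi+q\sin\psi$ as in \eqref{equation:10}. The component of $u-u^*$ along $u^*$, of size $1-\cos\varphi$, is then absorbed into the singular-vector piece \emph{before} any norm bound is taken: its coefficients become $\alpha\cos\varphi$ and $\beta\cos\psi$, are solved exactly through \eqref{dimension2}, and since $\cos\varphi,\cos\psi\leq1$ and the projectors supply $\|u^*_{\perp}\|,\|v^*_{\perp}\|\leq\sin_{\max}$, that piece contributes at most $\frac{\sin_{\max}}{|\sigma-\tau|}\sqrt{\alpha^2+\beta^2}$; the genuinely orthogonal remainder $[\alpha\sin\varphi\,p^T,\beta\sin\psi\,q^T]^T$ has norm $\sqrt{\alpha^2\sin^2\varphi+\beta^2\sin^2\psi}\leq\sin_{\max}\sqrt{\alpha^2+\beta^2}$ and, via $\|B\|=1/|\sigma-\tau|$, contributes the same amount, the two summing to exactly the factor $2$ in \eqref{equation:25}. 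Apart from this, your treatment of the $u^*,v^*$ piece via the explicit eigenpairs of $B$ with eigenvalues $\frac{1}{\sigma-\tau}$ and $-\frac{1}{\sigma+\tau}$ is a perfectly sound equivalent of the paper's $2\times2$ linear system \eqref{dimension2}, and your verification of $\|B\|=1/|\sigma-\tau|$ matches the paper's own usage.
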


 \begin{proof}
 From (\ref{equation:5}) and \eqref{hatvar}, we obtain
 \begin{displaymath}
 \varepsilon=\frac{\left\|\begin{bmatrix}\begin{smallmatrix}
 (I_M-P_U)\widetilde{s}\\(I_N-P_V)\widetilde{t}\end{smallmatrix}
 \end{bmatrix}-\begin{bmatrix}\begin{smallmatrix}(I_M-P_U) {s}\\
 (I_N-P_V){t}\end{smallmatrix}\end{bmatrix}\right\|}
 {\left\|\begin{bmatrix}\begin{smallmatrix}s\\t\end{smallmatrix}
 \end{bmatrix}\right\|\sqrt{\|g_{\perp}\|^2+\|h_{\perp}\|^2}}=
 \frac{\left\|\begin{bmatrix}\begin{smallmatrix} (I_M-P_U)& \\ &(I_N-P_V)
 \end{smallmatrix}\end{bmatrix}\begin{bmatrix}\begin{smallmatrix} s\\
  t\end{smallmatrix}\end{bmatrix}\right\|}{\left\|\begin{bmatrix}
  \begin{smallmatrix}s\\ t\end{smallmatrix}\end{bmatrix}
  \right\|\sqrt{\|g_{\perp}\|^2+\|h_{\perp}\|^2}}
 \widehat{\varepsilon}.
 \end{displaymath}
By (\ref{equation:3}), substituting $\begin{bmatrix}\begin{smallmatrix}s\\t
\end{smallmatrix}\end{bmatrix}= -\begin{bmatrix}\begin{smallmatrix}u\\v
\end{smallmatrix}\end{bmatrix}+ B\begin{bmatrix}\begin{smallmatrix}\alpha u\\
\beta v\end{smallmatrix}\end{bmatrix}$ into the above and
making use of $u\in\mathcal{U}$ and $v\in\mathcal{V}$, we obtain $(I_M-P_U)u=0,\
(I_N-P_V)v=0$ and
\begin{equation}\label{equation:24}
\begin{aligned}
 \varepsilon &=\frac{\left\|\begin{bmatrix}\begin{smallmatrix} (I_M-P_U)& \\
 &(I_N-P_V)\end{smallmatrix}\end{bmatrix}B\begin{bmatrix}\begin{smallmatrix}\alpha u\\
 \beta v\end{smallmatrix}\end{bmatrix}\right\|}{\left\|B
 \begin{bmatrix}\begin{smallmatrix}\alpha u\\ \beta v
 \end{smallmatrix}\end{bmatrix}-\begin{bmatrix}
 \begin{smallmatrix}u\\v\end{smallmatrix}\end{bmatrix}
 \right\|\sqrt{\|g_{\perp}\|^2+ \|h_{\perp}\|^2}}
 \widehat{\varepsilon}.
 \end{aligned}
 \end{equation}

 Decompose $u$ and $v$ into the orthogonal direct sums
 \begin{equation}\label{equation:10}
 \left\{\begin{aligned}
 &u=u^{*}\cos\varphi+p\sin\varphi\\
 &v=v^{*}\cos\psi+q\sin\psi
 \end{aligned}\right.
 \end{equation}
 with $p\perp u^{*},\ q\perp v^{*}$ and $\|p\|=\| q\|=1$.

For $\sigma\neq\tau$, there exist unique $\alpha^{\prime}$ and $\beta^{\prime}$
that satisfy the $2\times 2$ linear system
 \begin{equation}\label{dimension2}
 \begin{bmatrix} -\tau&\sigma\\ \sigma&-\tau \end{bmatrix}
 \begin{bmatrix} \alpha^{\prime}\\ \beta^{\prime} \end{bmatrix}=
 \begin{bmatrix} \alpha \cos\varphi \\ \beta\cos\psi \end{bmatrix}, \quad
 \mbox{i.e.,} \quad
 \begin{bmatrix}  \alpha^{\prime}\\ \beta^{\prime}  \end{bmatrix}=
 \begin{bmatrix} -\tau&\sigma\\ \sigma&-\tau \end{bmatrix}^{-1}
 \begin{bmatrix} \alpha \cos\varphi \\ \beta\cos\psi \end{bmatrix}.
 \end{equation}

Since $(\sigma,u^*,v^*)$ is a singular triplet of $A$, it follows
from \eqref{dimension2} and the  definition \eqref{defB} of $B$ that
 \begin{displaymath}
 \begin{bmatrix} -\tau I_M&A\\A^T &   -\tau I_N
  \end{bmatrix}
 \begin{bmatrix} \alpha^{\prime}u^{*}\\
  \beta^{\prime}v^{*} \end{bmatrix}=
  \begin{bmatrix} \alpha\cos\varphi u^{*}\\
 \beta\cos\psi v^{*} \end{bmatrix}, \quad \mbox{i.e.,} \quad
 \begin{bmatrix} \alpha^{\prime} u^{*}\\ \beta^{\prime}v^{*}
 \end{bmatrix}=B\begin{bmatrix} \alpha \cos\varphi u^{*}\\ \beta\cos\psi v^{*}
  \end{bmatrix}.
 \end{displaymath}
Therefore, from \eqref{equation:10} and the above relation we obtain
 \begin{equation}\label{equation:22}
 B\begin{bmatrix} \alpha u\\ \beta v \end{bmatrix}=
 B\begin{bmatrix} \alpha \cos\varphi u^{*}\\
  \beta\cos\psi v^{*} \end{bmatrix}+
 B\begin{bmatrix} \alpha \sin\varphi p\\
 \beta\sin\psi q \end{bmatrix}=
 \begin{bmatrix} \alpha^{\prime} u^{*}\\
 \beta^{'}v^{*} \end{bmatrix}+B
 \begin{bmatrix} \alpha \sin\varphi p\\
 \beta\sin\psi q \end{bmatrix}.
 \end{equation}

 Taking norms on both sides of the second relation in
 \eqref{dimension2}, we obtain
 \begin{equation}\label{equation:50}
 \sqrt{(\alpha^{\prime})^2 + (\beta^{\prime})^2}
  \leq  \frac{1}{|\sigma -  \tau|}\sqrt{( \alpha\cos\varphi )^2
 + ( \beta\cos\psi )^2}
 \leq \frac{1}{|\sigma-\tau|}
 \sqrt{\alpha^2 + \beta^2}.
 \end{equation}

For the numerator of \eqref{equation:24},  exploiting (\ref{equation:22}),
we have
 \begin{align}
&\left\|\begin{bmatrix}  I_M - P_U &\\ & I_N - P_V \end{bmatrix}B
 \begin{bmatrix} \alpha u\\ \beta v \end{bmatrix} \right\| \nonumber \\
& \quad =\left\|\begin{bmatrix}  \alpha^{\prime}(I_M-P_U)u^{*}\\
 \beta^{\prime}(I_N-P_V)v^{*} \end{bmatrix}+
 \begin{bmatrix} I_M -  P_U &\\ & I_N - P_V \end{bmatrix}B
 \begin{bmatrix} \alpha \sin\varphi p\\ \beta
 \sin\psi q \end{bmatrix}\right\|  \nonumber \\
& \quad \leq \left\| \begin{bmatrix}\alpha^{\prime}(I_M-P_U)u^{*}\\
 \beta^{\prime}(I_N-P_V)v^{*} \end{bmatrix} \right\| +
 \left\| \begin{bmatrix} I_M - P_U &\\ & I_N - P_V
  \end{bmatrix} B \begin{bmatrix}
  \alpha \sin\varphi p\\ \beta \sin\psi q
  \end{bmatrix} \right\| \nonumber  \\
& \quad \leq\sqrt{(\alpha^{\prime})^2\|u^{*}_{\perp}\|^2 + (\beta^{\prime})^2
 \| v^{*}_{\perp}\|^2} + \frac{1}{|\sigma-\tau|}\sqrt{\alpha^2\sin^2\varphi +
 \beta^2\sin^2\psi}, \label{eqtm3:1}
 \end{align}
 where we have used
 $\|B\|=\frac{1}{|\sigma-\tau|}$ in the last inequality and
 $u^{*}_{\perp}=(I_M-P_U)u^{*}$, $v^{*}_{\perp}=(I_N-P_V)v^{*}$,
 as defined in Lemma~\ref{lemma1}. Notice from \eqref{sinmax} that
 \begin{align*}
 &\|u^{*}_{\perp}\|=\sin\angle(\mathcal{U},u^{*})\leq\sin\angle(u,u^{*})
 =\sin\varphi\leq  \sin_{\max},\\
 &\|v^{*}_{\perp}\|=\sin\angle(\mathcal{V},v^{*})\leq\sin\angle(v,v^{*})=
 \sin\psi\leq \sin_{\max}.
 \end{align*}
 Therefore, from the above relations and \eqref{equation:50} we obtain
 \begin{align}
 \left\| \begin{bmatrix}I_M-P_U &\\ & I_N-P_V
 \end{bmatrix} B \begin{bmatrix} \alpha u\\
 \beta v \end{bmatrix}  \right\|
  &\leq  \left(\sqrt{(\alpha^{\prime})^2 + (\beta^{\prime})^2
 } + \frac{1}{|\sigma-\tau|}\sqrt{\alpha^2 + \beta^2}\right) \sin_{\max} \nonumber \\
 &=\frac{2}{|\sigma - \tau|}\sqrt{\alpha^2 + \beta^2}\sin_{\max}\label{eqtm3:2}
 \end{align}
 Then from (\ref{equation:35}) and (\ref{equation:24}) it follows
 that (\ref{equation:25}) holds.
 \end{proof}

In the following we analyze Theorem~\ref{theorem:2}
and obtain a more compact and insightful form by estimating $\alpha$ and $\beta$
accurately. To this end,
assume that the current $u$ and $v$ are reasonably good approximations to the
desired $u^{*}$ and $v^{*}$ with the same order accuracy, that is,
\begin{equation}\label{assum1}
\sin\varphi=\sin\angle(u,u^{*})=\mathcal{O}(\epsilon),  \quad  \quad
\sin\psi=\sin\angle(v,v^{*})=\mathcal{O}( \epsilon)
\end{equation}
with $\epsilon$ reasonably small, which means that
the exact solutions $[s^T,t^T]^T$
of the correction equations in Algorithm~\ref{algorithm:1} satisfy
\begin{equation}\label{assum2}
\|s\|=\mathcal{O}(\epsilon), \quad  \quad \|t\|=\mathcal{O}(\epsilon).
\end{equation}
Define the quantity
\begin{equation}\label{gamma}
\gamma=sign(\theta-\tau)\frac{\sqrt2}{\sqrt{\alpha^2+\beta^2}}
\end{equation}
with $sign(\cdot)$ being the sign function.

In what follows we estimate $\alpha,\ \beta$ and $\gamma$ accurately.

\begin{theorem}\label{alphabeta}
For $\alpha,\beta$ and $\gamma$ defined by \eqref{equation:3} and \eqref{gamma},
we have
\begin{align}
\alpha&=\theta-\tau+\mathcal{O}(\epsilon^2),\label{alpha}\\
\beta&=\theta-\tau+
\mathcal{O}(\epsilon^2),\label{beta}\\
\gamma&=\frac{1}{\theta-\tau}+\mathcal{O}(\epsilon^2).\label{gammaeq}
\end{align}
\end{theorem}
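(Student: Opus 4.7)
The plan is to prove the three estimates in turn, with \eqref{alpha} and \eqref{beta} being essentially the same calculation and \eqref{gammaeq} following from them by a Taylor expansion. The main leverage point is the definition $\alpha=\theta-\tau+u^{T}At$ and $\beta=\theta-\tau+v^{T}A^{T}s$ together with the hypotheses \eqref{assum1}--\eqref{assum2}, so the heart of the matter is to show $u^{T}At=\mathcal{O}(\epsilon^{2})$ and $v^{T}A^{T}s=\mathcal{O}(\epsilon^{2})$ rather than the naive $\mathcal{O}(\epsilon)$ estimate one would obtain from $\|t\|,\|s\|=\mathcal{O}(\epsilon)$ and a generic bound on $u^{T}A$, $v^{T}A^{T}$.

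To gain the extra order of $\epsilon$, I would combine the orthogonal decompositions \eqref{equation:10}, namely $u=u^{*}\cos\varphi+p\sin\varphi$ with $p\perp u^{*}$ and $v=v^{*}\cos\psi+q\sin\psi$ with $q\perp v^{*}$, with the singular value identities $(u^{*})^{T}A=\sigma(v^{*})^{T}$ and $Av^{*}=\sigma u^{*}$ and, crucially, with the double orthogonality $(s,t)\perp\perp(u,v)$, i.e., $u^{T}s=0$ and $v^{T}t=0$. From $v^{T}t=0$ and the decomposition of $v$ we get $\cos\psi\,(v^{*})^{T}t=-\sin\psi\,q^{T}t$, so $(v^{*})^{T}t=-\tan\psi\cdot q^{T}t=\mathcal{O}(\epsilon)\cdot\mathcal{O}(\epsilon)=\mathcal{O}(\epsilon^{2})$ using $|q^{T}t|\le\|t\|$. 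Plugging into
\[
u^{T}At=\cos\varphi\,(u^{*})^{T}At+\sin\varphi\,p^{T}At=\sigma\cos\varphi\,(v^{*})^{T}t+\sin\varphi\,p^{T}At
\]
yields $u^{T}At=\sigma\cos\varphi\cdot\mathcal{O}(\epsilon^{2})+\mathcal{O}(\epsilon)\cdot\|A\|\cdot\mathcal{O}(\epsilon)=\mathcal{O}(\epsilon^{2})$, establishing \eqref{alpha}. The same argument with $u\leftrightarrow v$, $s\leftrightarrow t$, $\varphi\leftrightarrow\psi$, $p\leftrightarrow q$, $A\leftrightarrow A^{T}$ gives \eqref{beta}.

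For \eqref{gammaeq}, from \eqref{alpha}--\eqref{beta} we obtain $\alpha^{2}+\beta^{2}=2(\theta-\tau)^{2}+\mathcal{O}(\epsilon^{2})=2(\theta-\tau)^{2}\bigl(1+\mathcal{O}(\epsilon^{2})\bigr)$, where I would note that $|\theta-\tau|$ is bounded below away from zero since $\tau$ is not a singular value of $A$ and $\theta$ is close to $\sigma$. Taking square roots and substituting into the definition of $\gamma$ gives
\[
\gamma=\frac{\mathrm{sign}(\theta-\tau)\sqrt{2}}{|\theta-\tau|\sqrt{2}\,\sqrt{1+\mathcal{O}(\epsilon^{2})}}=\frac{1}{\theta-\tau}\bigl(1+\mathcal{O}(\epsilon^{2})\bigr)=\frac{1}{\theta-\tau}+\mathcal{O}(\epsilon^{2}),
\]
which is \eqref{gammaeq}.

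The main obstacle, and the only nontrivial step, is recognizing that the double orthogonality condition is what forces the cancellation that turns the seemingly $\mathcal{O}(\epsilon)$ terms $u^{T}At$ and $v^{T}A^{T}s$ into $\mathcal{O}(\epsilon^{2})$; a method that used only ordinary orthogonality of $r$ would not give the same structural simplification and the subsequent analysis (in particular the sharpness of Theorem~\ref{theorem:2}) would degrade. Once this observation is made, everything else is bookkeeping of the big-$\mathcal{O}$ terms.
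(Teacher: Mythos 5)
Your proof is correct and follows essentially the same route as the paper: both exploit the double orthogonality $u^{T}s=v^{T}t=0$ together with the decompositions of the approximate vectors along the exact singular vectors to show $u^{T}At,\,v^{T}A^{T}s=\mathcal{O}(\epsilon^{2})$, and then deduce the estimate for $\gamma$. The only cosmetic difference is that you extract $(v^{*})^{T}t=-\tan\psi\,q^{T}t$ from the decomposition of $v$, whereas the paper introduces the reverse decompositions $u^{*}=u\cos\varphi+p^{\prime}\sin\varphi$, $v^{*}=v\cos\psi+q^{\prime}\sin\psi$ and reads off $(v^{*})^{T}t=\sin\psi\,t^{T}q^{\prime}$ directly (avoiding the division by $\cos\psi$); your explicit remark that $|\theta-\tau|$ stays bounded away from zero is a point the paper leaves implicit.
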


 \begin{proof}
From \eqref{equation:3} and \eqref{equation:10}, we have
\begin{displaymath}\quad\quad\left\{\begin{aligned}
&\alpha=\theta-\tau+t^TA^T(u^{*}\cos\varphi+p\sin\varphi),\\
&\beta=\theta-\tau+s^TA(v^{*}\cos\psi+q\sin\psi),
\end{aligned}\right.
\end{displaymath}
i.e.,
\begin{displaymath}\quad\quad\left\{\begin{aligned}
&\alpha-(\theta-\tau)=t^TA^Tp\sin\varphi+\sigma t^Tv^{*}\cos\varphi,\\
&\beta-(\theta-\tau)=s^TAq\sin\psi+\sigma s^Tu^{*}\cos\psi.
\end{aligned}\right.\end{displaymath}

Keep in mind that $\varphi=\angle(u,u^{*}),\ \ \psi=\angle(v,v^{*})$. Similarly to
\eqref{equation:10}, we now decompose $u^{*}$ and $v^{*}$ into the orthogonal
direct sums
\begin{equation*}\left\{\begin{aligned}
&u^{*}=u\cos\varphi+p^{\prime}\sin\varphi,\\
&v^{*}=v\cos\psi+q^{\prime}\sin\psi
\end{aligned}\right.\end{equation*}
with $p^{\prime}\perp u, q^{\prime}\perp v$ and $\|p^{\prime}\|=\|q^{\prime}\|=1$.
Since $s\perp u$ and $t\perp v$, we have
\begin{displaymath}\left\{\begin{aligned}
&\alpha-(\theta-\tau)=t^TA^Tp\sin\varphi+\sigma t^Tq^{\prime}\cos\varphi\sin\psi,\\
&\beta-(\theta-\tau)=s^TAq\sin\psi+\sigma s^Tp^{\prime}\cos\psi\sin\varphi.
\end{aligned}\right.
\end{displaymath}
Therefore, from \eqref{sinmax}, \eqref{assum1} and \eqref{assum2} we obtain
$$
\left\{\begin{aligned}
&|\alpha-(\theta-\tau)|\leq (\|A\|+\sigma)\|t\|\sin_{\max}=\mathcal{O}(\epsilon^2),\\
&|\beta-(\theta-\tau)|\leq(\|A\|+\sigma)\|s\|\sin_{\max}=\mathcal{O}(\epsilon^2).
\end{aligned}\right.
$$
Consequently, we obtain
$$
\alpha=\theta-\tau+\mathcal{O}(\epsilon^2), \quad  \quad \beta=\theta-\tau+
\mathcal{O}(\epsilon^2),
$$
which proves \eqref{alpha}, \eqref{beta}
and
\begin{equation}\label{alphabetabound}
|\alpha-\beta|\leq (\|A\|+\sigma)(\|t\|+\|s\|)\sin_{\max}
\leq 2\|A\|(\|t\|+\|s\|)\sin_{\max}=\mathcal{O}(\epsilon^2).
\end{equation}
From \eqref{alpha} and \eqref{beta} it is seen that
$\alpha$ and $\beta$ have the same signs as $\theta-\tau$
for $\epsilon$ reasonably small.
Moreover, $\alpha,\beta$ and $\gamma$ have the same signs, and
\begin{equation*}
\alpha\gamma=1+\mathcal{O}(\epsilon^2)  \quad \mbox{and} \quad
\beta\gamma=1+\mathcal{O}(\epsilon^2),
\end{equation*}
which yields \eqref{gammaeq} by combining \eqref{gamma} with
\eqref{alpha} or \eqref{beta}.
\end{proof}

By simple justification, we find from this theorem that
\begin{subequations}\label{equations}\begin{align*}
 \frac{1}{\sqrt{\alpha^2 + \beta^2}}\left\|B
 \begin{bmatrix}  \alpha u\\ \beta v \end{bmatrix} -
 \begin{bmatrix} u\\v\end{bmatrix} \right\|
 &= \frac{1}{\sqrt{2}}\left\| B
 \begin{bmatrix} u\\v \end{bmatrix} - \gamma
 \begin{bmatrix} u\\v \end{bmatrix}
 \right\| + \mathcal{O}(\epsilon^2) \label{equation:34}\\
  &= \frac{1}{\sqrt{2}}\left\| B
  \begin{bmatrix}u\\v \end{bmatrix} - \frac{1}{\theta - \tau}
 \begin{bmatrix} u\\v \end{bmatrix}
 \right\| + \mathcal{O}(\epsilon^2). \end{align*}
\end{subequations}
Therefore, within $\mathcal{O}(\epsilon^2)$,
the above left-hand side, which is a factor of the right-hand side
in \eqref{equation:25}, is the residual
norm of regarding $\gamma$ or $\frac{1}{\theta-\tau}$ as an
approximation to the eigenvalue $\frac{1}{\sigma-\tau}$
of $B$ and $\frac{1}{\sqrt{2}}
\begin{bmatrix}\begin{smallmatrix}u\\v
 \end{smallmatrix}\end{bmatrix}$ as the corresponding
normalized approximate eigenvector.

With the help of Theorem~\ref{alphabeta}, we are able to make Theorem~\ref{theorem:2}
clearer, more compact and insightful. Before doing so,
we need the following lemma, which is direct from
Theorem 6.1 of \cite{jia2001analysis}.

\begin{lemma}\label{lemma:1}
Let $\left(\frac{1}{\sigma-\tau},w=\frac{1}{\sqrt{2}}
\begin{bmatrix}\begin{smallmatrix}u^*\\v^*
 \end{smallmatrix}\end{bmatrix}\right)$
 be a simple eigenpair of $B$ and
 $[w,W_{\perp}]$ be orthogonal. Then
 \begin{displaymath}
 \begin{bmatrix} w^T\\ W_{\perp}^T \end{bmatrix}
 B[w, W_{\perp}]
 =\begin{bmatrix} \frac{1}{\sigma-\tau}&0\\0&L \end{bmatrix},
 \end{displaymath}
 where $L=W_{\perp}^TBW_{\perp}$ is a symmetric matrix.
 Let $\left(\gamma,z=\frac{1}{\sqrt{2}} \begin{bmatrix}\begin{smallmatrix}u\\v
 \end{smallmatrix}\end{bmatrix}\right)$
 be an approximation to $(\frac{1}{\sigma-\tau},w)$. Assume that
 $\gamma$ is not an eigenvalue of $L$, and define
 \begin{displaymath}
 {\rm sep}(\gamma,L)=\|(L-\gamma I)^{-1}\|^{-1}>0.
 \end{displaymath}
 Then
 \begin{equation}\label{equation:6}
 \sin\angle(z,w)\leq\frac{\|Bz-\gamma z\|}{{\rm sep}(\gamma,L)}.
 \end{equation}
 \end{lemma}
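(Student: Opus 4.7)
The plan is to prove this by the standard residual-based bound for an approximate eigenpair of a symmetric matrix, in the spirit of Davis--Kahan or Saad. First I would verify that $B$ is symmetric, which follows because it is the inverse of the symmetric matrix $\begin{bmatrix}-\tau I_M & A\\ A^T & -\tau I_N\end{bmatrix}$. Given the hypothesis $Bw=\tfrac{1}{\sigma-\tau}w$ with $w$ of unit norm (here $\|w\|=1$ follows from $\|u^*\|=\|v^*\|=1$ and the $\tfrac{1}{\sqrt 2}$ normalization), together with $W_\perp^Tw=0$ and the orthogonality of $[w,W_\perp]$, we immediately get $W_\perp^TBw=\tfrac{1}{\sigma-\tau}W_\perp^Tw=0$ and, by symmetry of $B$, also $w^TBW_\perp=0$. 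This yields the stated block-diagonal form with $L=W_\perp^TBW_\perp$ symmetric.

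Next I would expand $z$ in the orthonormal basis $[w,W_\perp]$ as
\begin{equation*}
z=w\cos\angle(z,w)+W_\perp y\,\sin\angle(z,w),
\end{equation*}
where $y$ is a unit vector in the orthogonal complement of $w$. Applying $B-\gamma I$ and using the block-diagonal decomposition established in the first step gives
\begin{equation*}
(B-\gamma I)z=\left(\tfrac{1}{\sigma-\tau}-\gamma\right)w\cos\angle(z,w)+W_\perp(L-\gamma I)y\,\sin\angle(z,w).
\end{equation*}
Because $W_\perp^Tw=0$, the two terms on the right are orthogonal, so the Pythagorean identity yields
\begin{equation*}
\|Bz-\gamma z\|^2=\left(\tfrac{1}{\sigma-\tau}-\gamma\right)^2\cos^2\angle(z,w)+\|(L-\gamma I)y\|^2\sin^2\angle(z,w).
\end{equation*}

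The final step is to invoke the definition $\mathrm{sep}(\gamma,L)=\|(L-\gamma I)^{-1}\|^{-1}=\min_{\|y\|=1}\|(L-\gamma I)y\|$, which is well defined since $\gamma\notin\mathrm{spec}(L)$. Dropping the first squared term and using $\|(L-\gamma I)y\|\geq\mathrm{sep}(\gamma,L)$ delivers $\|Bz-\gamma z\|\geq\mathrm{sep}(\gamma,L)\sin\angle(z,w)$, and dividing through gives the asserted bound \eqref{equation:6}. I do not anticipate a genuine obstacle: the only ingredients are the symmetry of $B$, the simplicity of the eigenvalue $\tfrac{1}{\sigma-\tau}$ (used implicitly in the one-dimensional/complementary block split), and the standard definition of $\mathrm{sep}$; the lemma is a classical Davis--Kahan style residual bound specialized to $B$ and so the proof is essentially a direct verification rather than a construction.
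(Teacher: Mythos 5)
Your proof is correct, and the verification goes through: $B$ is symmetric, $Bw=\frac{1}{\sigma-\tau}w$ follows from $(\sigma,u^*,v^*)$ being a singular triplet, the block-diagonal form and the Pythagorean splitting of $(B-\gamma I)z$ are valid, and $\mathrm{sep}(\gamma,L)=\min_{\|y\|=1}\|(L-\gamma I)y\|$ gives the bound. The paper itself does not prove this lemma---it cites Theorem 6.1 of \cite{jia2001analysis}---but your argument (expand $z$ in the orthonormal basis $[w,W_{\perp}]$, apply $B-\gamma I$, drop the component along $w$, and bound the $W_{\perp}$-block below by $\mathrm{sep}(\gamma,L)$) is essentially the same computation the paper later carries out to prove its deflated generalization, Lemma \ref{lemma:2}, which reduces to this statement when $k=0$; so your proof matches the paper's own technique rather than introducing a genuinely different route.
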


Based on Theorem \ref{theorem:2}
and Lemma \ref{lemma:1}, we can establish the following close relationship
between $\varepsilon$ and $\widetilde\varepsilon$ defined
by \eqref{equation:2} and \eqref{widevarepsion}, respectively.

\begin{theorem}\label{theorem:3}
Assume that $\gamma=sign(\theta-\tau)\frac{\sqrt2}
{\sqrt{\alpha^2+\beta^2}}$ is an approximation to $\frac{1}{\sigma-\tau}$
and is not an eigenvalue of $L$, and
let $\varepsilon$ and $\widetilde{\varepsilon}$ be defined by \eqref{equation:2}
and \eqref{widevarepsion}. Then
\begin{equation}\label{equation:15}
\varepsilon\leq\frac{2\sqrt2\delta}{{\rm sep}(\gamma,L) |\sigma-\tau|
\sqrt{\|g_{\perp}\|^2+\|h_{\perp}\|^2}}\widetilde{\varepsilon},
\end{equation}
where
\begin{equation}\label{equation:26}
\delta=\frac{\left\|B\begin{bmatrix}\begin{smallmatrix}u\\v
 \end{smallmatrix}\end{bmatrix}-\gamma \begin{bmatrix}\begin{smallmatrix}u\\v
\end{smallmatrix}\end{bmatrix}\right\|}{\left\|B\begin{bmatrix}
\begin{smallmatrix}\alpha\gamma u\\ \beta\gamma v \end{smallmatrix}
\end{bmatrix}-\gamma \begin{bmatrix}\begin{smallmatrix}u\\v
\end{smallmatrix}\end{bmatrix}\right\|}.\end{equation}
\end{theorem}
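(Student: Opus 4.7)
The plan is to reshape the bound \eqref{equation:25} of Theorem~\ref{theorem:2} into the form \eqref{equation:15} by introducing the unit vector $z = \tfrac{1}{\sqrt{2}}\begin{bmatrix}u\\ v\end{bmatrix}$ and regarding $(\gamma,z)$ as an approximate eigenpair of $B$ for $(\tfrac{1}{\sigma-\tau},\, w)$ with $w = \tfrac{1}{\sqrt{2}}\begin{bmatrix}u^{*}\\ v^{*}\end{bmatrix}$. Two ingredients will do all the work: (a) rewriting the factor $\sqrt{\alpha^2+\beta^2}\big/\bigl\|B\begin{bmatrix}\alpha u\\ \beta v\end{bmatrix}-\begin{bmatrix}u\\ v\end{bmatrix}\bigr\|$ appearing in \eqref{equation:25} exactly as $\delta/\|Bz-\gamma z\|$, and (b) proving the geometric inequality $\sin_{\max}\leq \sqrt{2}\,\sin\angle(z,w)$. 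Lemma~\ref{lemma:1} then converts $\sin\angle(z,w)$ into $\|Bz-\gamma z\|/\mathrm{sep}(\gamma,L)$, the $\|Bz-\gamma z\|$ factors cancel between numerator and denominator, and the bound \eqref{equation:15} drops out with the exact constant $2\sqrt{2}$.

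For step (a) I would exploit linearity of $B$ to observe
\[
B\begin{bmatrix}\alpha\gamma u\\ \beta\gamma v\end{bmatrix} - \gamma\begin{bmatrix}u\\ v\end{bmatrix} = \gamma\!\left(B\begin{bmatrix}\alpha u\\ \beta v\end{bmatrix} - \begin{bmatrix}u\\ v\end{bmatrix}\right),
\]
together with $\begin{bmatrix}u\\ v\end{bmatrix} = \sqrt{2}\,z$, so that the definition \eqref{equation:26} of $\delta$ rearranges to $\bigl\|B\begin{bmatrix}\alpha u\\ \beta v\end{bmatrix} - \begin{bmatrix}u\\ v\end{bmatrix}\bigr\| = \sqrt{2}\,\|Bz-\gamma z\|/(|\gamma|\delta)$. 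Combined with the identity $|\gamma|\sqrt{\alpha^{2}+\beta^{2}} = \sqrt{2}$ read off from \eqref{gamma}, this immediately yields the stated rewriting of the prefactor.

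For step (b), since $z$ and $w$ are unit vectors and all of $u,v,u^{*},v^{*}$ are unit vectors, a direct computation gives $\cos\angle(z,w) = z^{T} w = \tfrac{1}{2}(\cos\varphi+\cos\psi)$, hence $4\sin^{2}\angle(z,w) = 4-(\cos\varphi+\cos\psi)^{2}$. Taking WLOG $\sin_{\max} = \sin\varphi$ and writing $a = \cos\varphi$, the desired inequality $\sin_{\max}^{2}\leq 2\sin^{2}\angle(z,w)$ reduces to $(\cos\varphi+\cos\psi)^{2}\leq 2(1+a^{2})$, which follows from $\cos\psi\leq 1$ and the elementary $(a+1)^{2}-2(1+a^{2}) = -(a-1)^{2}\leq 0$. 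Feeding the resulting bound $\sin_{\max}\leq \sqrt{2}\,\|Bz-\gamma z\|/\mathrm{sep}(\gamma,L)$ (from Lemma~\ref{lemma:1}) together with the identity from (a) into \eqref{equation:25} completes the proof.

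The main obstacle, I expect, is not any single piece of algebra but rather recognizing the correct Ritz-like pair $(\gamma,z)$ of $B$ to which Lemma~\ref{lemma:1} should be applied. Theorem~\ref{alphabeta} suggests that $1/(\theta-\tau)$ is already an $\mathcal{O}(\epsilon^{2})$-accurate approximation to $1/(\sigma-\tau)$, but using it literally produces a messier bound; the particular scaling $\gamma = sign(\theta-\tau)\sqrt{2}/\sqrt{\alpha^{2}+\beta^{2}}$ is precisely the normalization that turns the prefactor in \eqref{equation:25} into the clean ratio $\delta/\|Bz-\gamma z\|$, with $\delta$ quantifying exactly how much $\gamma$ differs from a genuine Rayleigh quotient of $B$ in the direction $z$.
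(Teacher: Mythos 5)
Your proof is correct and follows essentially the same path as the paper's: starting from \eqref{equation:25}, both arguments use the identity $B\begin{bmatrix}\begin{smallmatrix}\alpha\gamma u\\ \beta\gamma v\end{smallmatrix}\end{bmatrix}-\gamma\begin{bmatrix}\begin{smallmatrix}u\\ v\end{smallmatrix}\end{bmatrix}=\gamma\bigl(B\begin{bmatrix}\begin{smallmatrix}\alpha u\\ \beta v\end{smallmatrix}\end{bmatrix}-\begin{bmatrix}\begin{smallmatrix}u\\ v\end{smallmatrix}\end{bmatrix}\bigr)$ together with $|\gamma|\sqrt{\alpha^{2}+\beta^{2}}=\sqrt{2}$ to bring in $\delta$, then bound $\sin_{\max}\leq\sqrt{2}\sin\angle(z,w)$ and invoke Lemma~\ref{lemma:1} so that the $\|Bz-\gamma z\|$ factors cancel. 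The only (harmless) difference is that the paper obtains $\sin_{\max}\leq\sqrt{2}\sin\phi$ by citing the inequality $\sin^{2}\varphi+\sin^{2}\psi\leq2\sin^{2}\phi$ from Theorem 2.3 of \cite{jia2003implicitly}, whereas you verify the needed weaker bound by a direct elementary computation, which is valid here because $\varphi$ and $\psi$ are acute, so $z^{T}w=\tfrac{1}{2}(\cos\varphi+\cos\psi)\geq0$.
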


\begin{proof}
Denote $\phi=\angle\left(\begin{bmatrix}\begin{smallmatrix}u\\v\end{smallmatrix}
\end{bmatrix},
 \begin{bmatrix}\begin{smallmatrix}u^{*}\\v^{*}\end{smallmatrix}\end{bmatrix}
 \right).
$
Then $\phi= \angle(z,w)$ by noting the definition of $z$ and $w$
in Lemma~\ref{lemma:1}. From \eqref{equation:6} we obtain
\begin{displaymath}
\sin\phi\leq \frac{\left\|B\begin{bmatrix}\begin{smallmatrix}u\\v\end{smallmatrix}
\end{bmatrix}-
\gamma\begin{bmatrix}\begin{smallmatrix}u\\v\end{smallmatrix}\end{bmatrix}\right\|}
{\sqrt{2}{\rm sep}(\gamma,L)}.
\end{displaymath}
From Theorem 2.3 of \cite{jia2003implicitly}, we have
\begin{displaymath}
\sin^2\varphi+\sin^2\psi\leq2\sin^2\phi.
\end{displaymath}
Hence
\begin{equation}\label{equation:8}
\sin_{\max}=\max\{\sin\varphi,\sin\psi\}\leq\sqrt2\sin\phi\leq\frac{
\left\|B\begin{bmatrix} \begin{smallmatrix}u\\v\end{smallmatrix}\end{bmatrix}-
\gamma\begin{bmatrix}
\begin{smallmatrix}u\\v\end{smallmatrix}\end{bmatrix}\right\|}{{\rm sep}(\gamma, L)}.
\end{equation}
Making use of this relation and $\delta$ defined by (\ref{equation:26}),
we obtain from \eqref{equation:25} that
\begin{align*}
\varepsilon&\leq\frac{2\sqrt2\delta\sin_{\max}}
{|\sigma-\tau|\left\|B\begin{bmatrix} \begin{smallmatrix}u\\v
\end{smallmatrix}\end{bmatrix}- \gamma\begin{bmatrix}
\begin{smallmatrix}u\\v\end{smallmatrix}\end{bmatrix}\right\|
\sqrt{\|g_{\perp}\|^2+\|h_{\perp}\|^2}}\widetilde{\varepsilon} \nonumber\\
&\leq\frac{2\sqrt2\delta}{{\rm sep}(\gamma,L)|\sigma-\tau|
\sqrt{\|g_{\perp}\|^2 +\|h_{\perp}\|^2}}\widetilde{\varepsilon}.\nonumber
\end{align*}\end{proof}

Theorem~\ref{theorem:3} shows that once $\widetilde \varepsilon$ is
given we can determine the {\em least} or {\em lowest} accuracy
requirement $\varepsilon$ for the correction equation (\ref{equation:1}).

We will prove that $\delta=1+\mathcal{O}(\epsilon)$ afterwards.
(\ref{equation:5}) indicates that $\sqrt{\|g_{\perp}\|^2+\|h_{\perp}\|^2}$ is modest
since $g$ and $h$ are general vectors satisfying $\|g\|^2+\|h\|^2=1$
and $\|g_{\perp}\|\leq \|g\|$, $\|h_{\perp}\|\leq \|h\|$.
If $\sqrt{\|g_{\perp}\|^2+\|h_{\perp}\|^2}$ is small, the solution
accuracy requirement $\varepsilon$ can be relaxed for a
fixed small $\widetilde{\varepsilon}$, that is, the
correction equation (\ref{equation:1}) is allowed to be solved with {\em lower}
accuracy.
So a small $\sqrt{\|g_{\perp}\|^2+\|h_{\perp}\|^2}$ is a {\em lucky} event.

If $\gamma$ is well separated from the other eigenvalues
of $B$ than $\frac{1}{\sigma-\tau}$, then we have
${\rm sep}(\gamma,L)\approx{\rm sep}(\frac{1}{\sigma-\tau},L)\leq 2\|B\|
=\frac{2}{|\sigma-\tau|}$, which is tight. In this case, noting
that $\sqrt{\|g_{\perp}\|^2+\|h_{\perp}\|^2}\leq 1$, we obtain
 \begin{displaymath}
 \varepsilon\leq\frac{2\sqrt2\delta} {{\rm sep}(\gamma,L)|\sigma-\tau|
 \sqrt{\|g_{\perp}\|^2+\|h_{\perp}\|^2}}
 \widetilde{\varepsilon}=\frac{\sqrt2} {\mathcal{O}(1)}
 \widetilde\varepsilon=\mathcal{O}(\widetilde\varepsilon).
 \end{displaymath}
On the other hand, if $\frac{1}{\sigma-\tau}$ is not well separated from the
other eigenvalues of $B$, then
${\rm sep}(\gamma,L)\approx{\rm sep}(\frac{1}{\sigma-\tau},L)$
is considerably smaller than $\|B\|=\frac{1}{|\sigma-\tau|}$,
which may make $\varepsilon$ considerably bigger than $\widetilde\varepsilon$,
meaning that we are allowed to solve (\ref{equation:1}) with {\em lower} accuracy.
Therefore, as far as solving correction equation \eqref{equation:1} is concerned,
the bad separation of $\frac{1}{\sigma-\tau}$ from the other
eigenvalues of $B$ is a {\em lucky} event.

Summarizing the above, we conclude that, in any event,
$\varepsilon=c\widetilde{\varepsilon}$ with a fairly modest constant
$c=\mathcal{O}(1)$ is a reliable and general-purpose choice.

Now, we analyze $\delta$ and estimate its size accurately.
Recall \eqref{gamma}, and denote
\begin{equation}x=B\begin{bmatrix} \alpha u\\ \beta v\end{bmatrix}-
\begin{bmatrix}  u\\  v \end{bmatrix}, \quad  \quad
y=\frac{1}{\gamma}\left( B  \begin{bmatrix} u\\ v \end{bmatrix}-
 \gamma\begin{bmatrix}  u\\   v \end{bmatrix} \right).
 \end{equation}
Then it is known from \eqref{equation:26} that
$\delta=\frac{\|y\|}{\|x\|}$ and
\begin{equation}\label{equation:11}
\left|1-\frac{1}{\delta}\right|=\frac{|\|y\|-\|x\||}{\|y\|}\leq\frac{\|y-x\|}{\|y\|}.
\end{equation}
When the right-hand side of (\ref{equation:11}) is smaller than one,
we will have
\begin{equation}\label{lowup}
\frac{1}{1+\frac{\|y-x\|}{\|y\|}}\leq\delta\leq\frac{1}{1-\frac{\|y-x\|}{\|y\|}}.
\end{equation}

We next estimate the right-hand side of (\ref{equation:11}) and prove that
it is $\mathcal{O}(\epsilon)$.

Write
$z=\frac{1}{\sqrt2}\begin{bmatrix}\begin{smallmatrix} u\\ v
\end{smallmatrix}\end{bmatrix}$ and
$w=\frac{1}{\sqrt2}\begin{bmatrix}\begin{smallmatrix} u^*\\ v^*
\end{smallmatrix}\end{bmatrix}$. Then $y=\frac{\sqrt2}{\gamma}(Bz-\gamma z)$,
and from \eqref{equation:6} we have
\begin{displaymath}
\|y\|=\frac{\sqrt2}{|\gamma|}\|Bz-\gamma z\|\geq\frac{\sqrt2}{|\gamma|}
\sin\angle(z,w){\rm sep}(\gamma,L).
\end{displaymath}
Notice from (\ref{equation:8}) that $\sqrt2 \sin\angle(z,w)\geq \sin_{\max}$.
Then the above relation gives
\begin{equation}\label{equation:13}
\|y\|\geq\frac{ {\rm sep}(\gamma, L)\sin_{\max}}{|\gamma|}.
\end{equation}

As for $\|y-x\|$, from \eqref{gamma} we obtain
\begin{subequations}\begin{align}
\|y-x\|&=\left\|B\begin{bmatrix} (\frac{1}{\gamma}-\alpha)u \nonumber\\
(\frac{1}{\gamma}-\beta)v \end{bmatrix}\right\|
\leq \|B\| \sqrt{(\frac{1}{\gamma}-\alpha)^2+(\frac{1}{\gamma}-\beta)^2}\nonumber\\
&=\frac{1}{|\sigma-\tau|} \sqrt{2(\alpha^2+\beta^2)-\sqrt{2(\alpha^2+\beta^2)}{\rm sign}
(\theta-\tau)(\alpha+\beta)}\nonumber\\
&=\frac{1}{|\sigma-\tau|} \sqrt{ \sqrt{2(\alpha^2 + \beta^2)}[\sqrt{2(\alpha^2 + \beta^2)} - {\rm sign}
(\theta - \tau)(\alpha + \beta)]}\nonumber\\
&=\frac{1}{|\sigma-\tau|} \sqrt{\frac{\sqrt{2\alpha^2 + 2\beta^2}(\alpha - \beta)^2}
{\sqrt{2\alpha^2 + 2\beta^2} + {\rm sign} (\theta - \tau)(\alpha + \beta)}}.\nonumber
\end{align}
\end{subequations}
It is known from Theorem~\ref{alphabeta} that
$sign(\alpha)=sign(\beta)=sign(\theta-\tau)$ for $\epsilon$ reasonably small.
Consequently,
we obtain $sign(\theta-\tau)(\alpha+\beta)>0$. Therefore,
\begin{equation}\label{equation:14}
\|y-x\|\leq\frac{1}{|\sigma-\tau|} |\alpha-\beta|.
\end{equation}
From \eqref{alphabetabound}, \eqref{equation:13} and (\ref{equation:14}) we obtain
\begin{equation}\label{equation:46}
\frac{\|y-x\|}{\|y\|}\leq\frac{|\gamma||\alpha-\beta|}{|\sigma-\tau|\sin_{\max}
{\rm sep}(\gamma,L)}\leq \frac{2|\gamma|\|A\|(\|s\|+\|t\|)}{|\sigma-\tau|{\rm sep}
(\gamma,L)}=\mathcal{O}({\epsilon}).
\end{equation}

Summarizing the above derivation, we have established
the following results.

\begin{theorem}\label{theorem:4}
  Assume that $\sin\varphi=\mathcal{O}({\epsilon})$,
  $\sin\psi=\mathcal{O}({\epsilon})$,  $\|s\|=\mathcal{O}({\epsilon})$
  and $\|t\|=\mathcal{O}({\epsilon})$. If $\epsilon$ is sufficiently small
  such that  $\|s\|+\|t\|< \frac{|\sigma-\tau|{\rm sep}(\gamma,L)}{2|\gamma|\|A\|}$,
  we have
  \begin{equation}\label{equation:16}
\frac{1}{1+\frac{2|\gamma|\|A\|(\|s\|+\|t\|)}{|\sigma-\tau|{\rm sep}(\gamma,L)}}
\leq\delta
\leq\frac{1}{1-\frac{2|\gamma|\|A\| (\|s\|+\|t\|)}{|\sigma-\tau|{\rm sep}(\gamma,L)}},
\end{equation}
i.e.,
\begin{equation}\label{equation:52}
1-\mathcal{O}({\epsilon}) \leq \delta\leq  1+\mathcal{O}({\epsilon}).
\end{equation}
\end{theorem}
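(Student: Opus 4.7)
The proof will essentially be a synthesis of the bounds already derived in the paragraphs immediately preceding the theorem statement, so the plan is one of assembly rather than fresh calculation. First I would start from the elementary inequality \eqref{equation:11}, which expresses $|1-1/\delta|$ in terms of $\|y-x\|/\|y\|$, and then invoke the two key quantitative estimates that have just been obtained: the lower bound \eqref{equation:13} on $\|y\|$ (a consequence of Lemma~\ref{lemma:1} together with the relation $\sqrt{2}\sin\angle(z,w)\ge \sin_{\max}$ from \eqref{equation:8}), and the upper bound \eqref{equation:14} on $\|y-x\|$ in terms of $|\alpha-\beta|$.

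Next, I would combine these with the estimate \eqref{alphabetabound} for $|\alpha-\beta|$ coming from Theorem~\ref{alphabeta}, which yields the ready-made inequality \eqref{equation:46}, namely $\|y-x\|/\|y\| \le 2|\gamma|\|A\|(\|s\|+\|t\|)/(|\sigma-\tau|\,{\rm sep}(\gamma,L))$. The hypothesis $\|s\|+\|t\| < |\sigma-\tau|\,{\rm sep}(\gamma,L)/(2|\gamma|\|A\|)$ then guarantees that this ratio is strictly less than one, so one may legitimately pass from \eqref{equation:11} to the two-sided bound \eqref{lowup}. Substituting the explicit estimate of \eqref{equation:46} for $\|y-x\|/\|y\|$ produces \eqref{equation:16} directly.

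Finally, to derive the asymptotic statement \eqref{equation:52}, I would observe that under the standing assumptions $\sin\varphi,\sin\psi,\|s\|,\|t\|=\mathcal{O}(\epsilon)$, the quantities $|\sigma-\tau|$, $\|A\|$ and ${\rm sep}(\gamma,L)$ are all $\mathcal{O}(1)$, while \eqref{gammaeq} of Theorem~\ref{alphabeta} gives $|\gamma|=1/|\theta-\tau|+\mathcal{O}(\epsilon^2)=\mathcal{O}(1)$ (using that $\theta$ is near $\sigma\ne\tau$, so $|\theta-\tau|$ is bounded away from zero). Hence the bounding ratio in \eqref{equation:46} is $\mathcal{O}(\epsilon)$, and \eqref{equation:52} follows from \eqref{equation:16} by the elementary expansion $(1\pm\mathcal{O}(\epsilon))^{-1}=1\pm\mathcal{O}(\epsilon)$.

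There is no real obstacle, since the genuine technical work has already been done in the preamble: bounding $\|y\|$ from below through ${\rm sep}(\gamma,L)$, showing $|\alpha-\beta|=\mathcal{O}(\epsilon^2)$, and verifying the sign relations that allowed simplification in \eqref{equation:14}. The only subtlety to be explicit about is checking that $|\gamma|$ stays of order one, so that the implicit constant in the $\mathcal{O}(\epsilon)$ term is harmless, and that the smallness hypothesis on $\|s\|+\|t\|$ does justify both the positivity of the denominator in \eqref{equation:16} and the quadratic-to-first-order reduction leading to \eqref{equation:52}.
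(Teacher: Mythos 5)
Your proposal is correct and follows essentially the same route as the paper, whose proof simply combines \eqref{lowup} with \eqref{equation:46} to get $\left|1-\frac{1}{\delta}\right|\leq\frac{2|\gamma|\|A\|(\|s\|+\|t\|)}{|\sigma-\tau|{\rm sep}(\gamma,L)}=\mathcal{O}(\epsilon)$, from which \eqref{equation:16} and \eqref{equation:52} follow at once. Your added remark that $|\gamma|=\mathcal{O}(1)$ (via \eqref{gammaeq}) is a sensible explicit check that the paper leaves implicit, but it does not change the argument.
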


\begin{proof}
Combining \eqref{lowup} and \eqref{equation:46} gives
\begin{displaymath}
\left|1-\frac{1}{\delta}\right|\leq\frac{2|\gamma|\|A\|(\|s\|+\|t\|)}
{|\sigma-\tau|{\rm sep}(\gamma,L)}
=\mathcal{O}({\epsilon}),
\end{displaymath}
which means \eqref{equation:16} and \eqref{equation:52}.
\end{proof}

This theorem indicates that, in practical implementations, we can take $\delta=1$
when determining $\varepsilon$ from a given $\widetilde{\varepsilon}$.

\section{JDSVD type methods with deflation}\label{section:7}

Suppose that the $\ell$ singular values $\sigma_i$ closest to the target $\tau$
and the associated singular vectors $u_i$ and $v_i$ are required, $i=1,2,\dots,\ell$. We
assume that the $\sigma_i$ satisfy
\begin{equation}\label{equation:mult}
|\sigma_1-\tau|< |\sigma_2-\tau| < \dots < |\sigma_{\ell}-\tau|
<|\sigma_{\ell+1}-\tau|\leq\cdots\leq|\sigma_N-\tau|.
\end{equation}
Our JDSVD type methods
described in Section \ref{section:2} combined with some
deflation technique can meet the demands,
and the theoretical results established in Section \ref{section:3}
can be extended to the variants.

In this section, we assume that $k<\ell$ approximate singular values
and the corresponding approximate singular vectors written as
\begin{equation}
(\Theta_c,U_c,V_c)=\left(
\begin{bmatrix}\begin{smallmatrix}\theta_{(1,c)}&&\\&\ddots&\\&&\theta_{(k,c)}
\end{smallmatrix}\end{bmatrix},
[u_{(1,c)},\dots,u_{(k,c)}],[v_{(1,c)},\dots,v_{(k,c)}]\right)
\end{equation}
have already converged to the desired singular triplets $(\sigma_i,u_i,v_i),\
i=1,2,\ldots,k$ with the $\sigma_i$ labeled as \eqref{equation:mult}, that is,
the norms of the approximate singular triplets satisfy
\begin{equation}\label{stoppingcretiria}
\|r_{(i,c)}\|=\left\|\begin{bmatrix}Av_{(i,c)}-\theta_{(i,c)}
u_{(i,c)}\\A^Tu_{(i,c)}-\theta_{(i,c)} v_{(i,c)}\end{bmatrix}\right\|
\leq \|A\|_1\cdot tol, \quad\quad i=1,\dots,k,
\end{equation}
where $tol$ is the user prescribed tolerance.
The columns of $U_c$ and $V_c$ are orthogonal and $(\Theta_c,U_c,V_c)$
is the approximation to the partial SVD of $A$:
\begin{equation}\label{partialsvd}
(\Sigma_k,U_k,V_k)=\left(
\begin{bmatrix}\begin{smallmatrix}\sigma_{1}&&\\&\ddots&\\&&\sigma_{k}
\end{smallmatrix}\end{bmatrix}
[u_{1},\dots,u_{k}],[v_{1},\dots,v_{k}]\right).\end{equation}

Our goal is to compute the next singular triplet
$(\sigma_{k+1},u_{k+1},v_{k+1})$, which, is denoted as
$(\sigma_,u^{*},v^{*})$ for simplicity, as done in Section \ref{introduction}.

\subsection{Deflation}

The deflation applied to our JDSVD type methods works as follows.
Given a pair of $m$-dimensional searching subspaces $\mathcal{U}$ and $\mathcal{V}$
orthogonal to $U_c$ and $V_c$,
we use each of the extraction methods described in
Sections~\ref{subsection:HJDSVD}--\ref{subsection:RHJDSVD}
to extract an approximate singular triplet
$(\theta, u, v)$ satisfying the double orthogonality condition,
whose residual $r=r(\theta, u, v)$ is defined by \eqref{residual}.
When expanding the subspaces, different from \eqref{equation:1},
we instead solve a new correction equation of form
\begin{equation}\label{correction}
  \begin{bmatrix}\begin{matrix}I_M-  Q  Q^T& \\ &
 I_N-  Z  Z^T\end{matrix}\end{bmatrix}
 \begin{bmatrix}\begin{matrix}-\tau I_M & A\\A^T& -
 \tau I_N\end{matrix}\end{bmatrix}
 \begin{bmatrix}\begin{matrix}I_M-  Q  Q^T& \\ &
 I_N-  Z  Z^T\end{matrix}\end{bmatrix}
 \begin{bmatrix}\begin{matrix}s\\t\end{matrix}\end{bmatrix}=-r,
 \end{equation}
for $(s,t)\perp\perp(Q,Z)$, where the columns of $Q=[U_c,u]$ and
$Z=[V_c,v]$ are orthonormal. Then we orthonormalize $s$ and $t$
against $U$ and $V$, whose columns are orthonormal bases
of $\mathcal{U}$ and $\mathcal{V}$, respectively,
to obtain the expansion vectors
$u_{+}$ and $v_{+}$.
Expand $U$ and $V$ to $U_{+}=[U,u_{+}]$ and $V_{+}=[V,v_{+}]$,
respectively, whose columns form orthonormal bases of the expanded
searching subspaces $\mathcal{U}_{+}$ and $\mathcal{V}_{+}$,
from which we continue to extract a new approximate singular
triplet.

Obviously, new approximate left and right singular vectors
are always orthogonal to the already converged $U_c$ and $V_c$
since the latter ones are orthogonal to $\mathcal{U}_{+}$ and $\mathcal{V}_{+}$,
respectively. Such orthogonality is guaranteed to working
precision in finite precision arithmetic, provided that
$s$ and $t$ are orthonormalized against $U$ and $V$ to working precision
when expanding $\mathcal{U}$ and $\mathcal{V}$ to
$\mathcal{U}_{+}$ and $\mathcal{V}_{+}$, respectively.

However, we notice that,
under our assumption on $\mathcal{U}$ and $\mathcal{V}$,
although the current approximate singular vectors $(\theta,u,v)$ naturally
satisfies $(u,v)\perp\perp(U_c,V_c)$, the relation
$r(\theta,u,v)\perp\perp (U_c,V_c)$ holds only when $(U_c,V_c)=(U_k,V_k)$.
In order to guarantee the consistency of
the correction equation \eqref{correction}, we should use the projected residual
\begin{equation}\label{replace}
r_p=\begin{bmatrix}\begin{smallmatrix}I_M-U_cU_c^T&\\
 &I_N-V_cV_c^T\end{smallmatrix}\end{bmatrix}r(\theta,u,v)
\end{equation}
to replace the right-hand side $r(\theta,u,v)$ in \eqref{correction}.


Notice that current $m$ dimensional searching
subspaces $\mathcal{U}$ and $\mathcal{V}$ contain reasonable information
on the next desired $(u^*,v^*)=(u_{k+1},v_{k+1})$. Therefore, when computing
$(\sigma_{k+1},u_{k+1},v_{k+1})$ with the current $(\theta,u,v)$
already converged to $(\sigma_k,u_k,v_k)$,
we may benefit a lot from $\mathcal{U}$ and $\mathcal{V}$
and find a better initial approximation to $(u^*,v^*)$ rather than the one
generated in some random way.
We will purge the newly converged left and
right singular vectors from
$\mathcal{U}$ and $\mathcal{V}$, and obtain the new $(m-1)$
dimensional searching subspaces which are orthogonal to
the converged left and right singular vectors, respectively, from which
we use harmonic or refined harmonic extraction to compute
approximate left and right singular vectors as the initial approximation
to the desired $(u^*,v^*)$. Then we proceed
to expand the searching subspaces in the regular way as
done in Algorithm~\ref{algorithm:1}.

Precisely, we formally construct the desired $(m-1)$ dimensional left
and right searching subspaces as follows:
Let the columns of $U$ and $V$ form the orthonormal bases of $\mathcal{U}$
and $\mathcal{V}$, and the converged left and right singular
vectors $u=Uc$ and $v=Vd$ with $\|c\|=\|d\|=1$.
Then we augment $c$ and $d$ such that $[c,C]$
and $[d,D]$ are orthogonal, in which the $m\times (m-1)$
orthonormal $C$ and $D$ are obtained by computing
the full QR factorizations of the $m\times 1$ matrices $c$ and $d$
at cost of $\mathcal{O}(m^3)$ flops. The orthonormal columns of
$U_{new}=UC$ and $V_{new}=VD$ form bases of the desired
$(m-1)$ dimensional searching subspaces $\mathcal{U}_{new}$
and $\mathcal{V}_{new}$, which are orthogonal to the converged
$u$ and $v$, respectively. Computationally, however, we do
not need to form $U_{new}=UC$ and $V_{new}=VD$ and
project $A$ onto $\mathcal{U}_{new}$ and $\mathcal{V}_{new}$ explicitly,
which is somehow expensive.
The key is that what we need is
to form the new matrices
\begin{eqnarray*}
H_{new}&:=&U_{new}^TAV_{new}=C^T(U^TAV)D=C^T H D,\\
G^{(1)}_{new}&:=&U_{new}^TAA^TU_{new}=C^T(U^TAA^TU)C=C^TG^{(1)} C,\\
G^{(2)}_{new}&:=&V_{new}^TA^TAV_{new}=D^T(V^TA^TAV)D=D^TG^{(2)} D.
\end{eqnarray*}
Note that $H$, $G^{(1)}$ and $G^{(2)}$ are already
available when computing the converged approximation
to the $(k+1)$th left and right singular vector pair $(u_{k+1},v_{k+1})$ of $A$.
Therefore, we only need to update $H=C^T H D$, $G^{(1)}=C^TG^{(1)} C$ and
$G^{(2)}=D^TG^{(2)} D$,
whose costs are negligible,
compared with the explicit computations of $U_{new}^TAV_{new}$, $U_{new}^TAA^TU_{new}$
and $V_{new}^TA^TAV_{new}$.

\subsection{Theoretic extensions}

Assuming that the first $k$ singular triplets $(\sigma_i,u_i,v_i)$ have
been computed exactly, i.e.,
\begin{equation}\label{assumption}
(\Theta_c,U_c,V_c)=(\Sigma_k,U_k,V_k),
\end{equation}
we will prove that the theory established in Section~\ref{section:3} works
for the JDSVD methods with deflation described above.
That is, as far as the correction equation \eqref{correction} with the
right-hand side \eqref{replace} is concerned, there are the same
relationships between
the solution accuracy $\varepsilon$ defined by \eqref{equation:2}
and the accuracy $\widetilde \varepsilon$ of the expansion vectors
$\widetilde{s}$ and $\widetilde{t}$ defined by \eqref{widevarepsion}.

First of all, we need to prove that
the solution $[s^T,t^T]^T$ of the correction equation \eqref{correction}
still has the expression \eqref{equation:3}. By using the same derivation
as \eqref{equation:20} and noticing that both $(s,t)$ and $(u,v)$
are double orthogonal to $(U_c,V_c)$, it is direct to
justify that this is true.

With Lemma \ref{lemma2} and the expression \eqref{equation:3}, we can
extend Theorem~\ref{theorem:2} to the solution accuracy $\varepsilon$
of \eqref{correction} and the accuracy $\widetilde \varepsilon$ of
the expansion vectors $\widetilde{s}$ and $\widetilde{t}$.
To this end, we only need to modify the
proof followed slightly:
When decomposing $u$ and $v$ into the orthogonal direct sums
\eqref{equation:10}, we have $(p,q)\perp\perp(U_c,V_c)$ since
both $(u,v)$ and $(u_{*},v_{*})$ are double orthogonal to $(U_c,V_c)$.
As a result, for the second term in the right-hand side
of \eqref{equation:22} we have
\begin{equation}
  \left\|B\begin{bmatrix}\alpha \sin\varphi p\\
 \beta\sin\psi q\end{bmatrix}\right\| \leq\frac{1}{|\sigma-\tau|}
 \sqrt{\alpha^2\sin^2\varphi + \beta^2\sin^2\psi},
 \footnote{Since span$\{U_k,V_k\}$ is an invariant subspace of $B$,
 it is easy to verify that
 $\|Bx\|\leq\frac{1}{|\sigma-\tau|}\|x\|$ for any $x\perp\perp(U_k,V_k)$.}
\end{equation}
starting with which we repeat the remaining proof of Theorem \ref{theorem:2}
step by step and extend the theorem to the solution of \eqref{correction}.

To make Theorem \ref{theorem:2} clearer, apart from Theorem \ref{alphabeta},
which trivially holds for $\alpha$, $\beta$ defined by \eqref{equation:3} and
$\gamma$ defined by \eqref{gamma},
we need the following lemma, which is
a generalization of Lemma \ref{lemma:1} and reduces to
it when $k=0$.

 \begin{lemma}\label{lemma:2}
Let $\left(\frac{1}{\sigma-\tau},w=\frac{1}{\sqrt{2}}
\begin{bmatrix}\begin{smallmatrix}u^*\\v^*
 \end{smallmatrix}\end{bmatrix}\right)$
 be a simple eigenpair of $B$ and $(\Sigma_k,U_k,V_k)$ be defined as
 \eqref{partialsvd}, and let
 $W=\begin{bmatrix}\begin{smallmatrix}U_k&\\&V_k  \end{smallmatrix}\end{bmatrix}$
 and $[w,W,W_{\perp}]$ be orthogonal. Then
 \begin{equation}\label{eqlemme5}
 \begin{bmatrix} w^T\\W^T\\ W_{\perp}^T \end{bmatrix}
 B[w, W, W_{\perp}]
 =\begin{bmatrix} \frac{1}{\sigma-\tau}&&\\&
 \begin{bmatrix}\begin{smallmatrix}-\tau I_k &\Sigma_k\\ \Sigma_k& -
 \tau I_k\end{smallmatrix}
 \end{bmatrix}^{-1} & \\&&L \end{bmatrix},
 \end{equation}
 where $L=W_{\perp}^TBW_{\perp}$.
 Suppose that $\left(\gamma,z=\frac{1}{\sqrt{2}}
 \begin{bmatrix}\begin{smallmatrix}u\\v \end{smallmatrix}\end{bmatrix}\right)$
 is an approximation to $(\frac{1}{\sigma-\tau},w)$ satisfying $z\perp\perp (U_k,V_k)$
and $\gamma$ is not an eigenvalue of $L$. Then
 \begin{equation}\label{angle1}
 \sin\angle(z,w)\leq\frac{\|Bz-\gamma z\|}{{\rm sep}(\gamma,L)}.
 \end{equation}
 \end{lemma}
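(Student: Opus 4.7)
The plan is to first verify the block-diagonalization \eqref{eqlemme5} of the symmetric matrix $B$ with respect to the orthonormal basis $[w,W,W_\perp]$, and then exploit this structure together with the hypothesis $z\perp\perp(U_k,V_k)$ to derive the sine bound \eqref{angle1} in essentially the same way as Lemma~\ref{lemma:1}. The key new ingredient, compared with the $k=0$ case, is identifying the middle diagonal block arising from the already-computed singular triplets.

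I would first check the three diagonal blocks. The $(1,1)$-block is immediate since $Bw=\tfrac{1}{\sigma-\tau}w$. For the $(2,2)$-block, I would apply $B^{-1}=\begin{bmatrix}-\tau I_M&A\\A^T&-\tau I_N\end{bmatrix}$ columnwise to $W=\mathrm{diag}(U_k,V_k)$, using $Av_i=\sigma_iu_i$ and $A^Tu_i=\sigma_iv_i$, to obtain
\begin{displaymath}
B^{-1}W=W\begin{bmatrix}-\tau I_k&\Sigma_k\\ \Sigma_k&-\tau I_k\end{bmatrix},
\end{displaymath}
so $W^TBW$ equals the inverse of this $2k\times 2k$ matrix, matching the middle block of \eqref{eqlemme5}; the $(3,3)$-block is $L=W_\perp^TBW_\perp$ by definition. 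To kill the off-diagonal blocks I would note that $\sigma=\sigma_{k+1}$ being simple and distinct from $\sigma_1,\ldots,\sigma_k$ forces $u^*\perp U_k$ and $v^*\perp V_k$, hence $W^Tw=0$. Since both $\mathrm{span}\{w\}$ and $\mathrm{span}(W)$ are invariant under the symmetric matrix $B$ and are mutually orthogonal, their common orthogonal complement $\mathrm{span}(W_\perp)$ is also $B$-invariant, giving $W^TBw=0$, $W_\perp^TBw=0$ and $W_\perp^TBW=0$. This establishes \eqref{eqlemme5}.

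For the sine bound, the double-orthogonality $z\perp\perp(U_k,V_k)$ translates into $u\perp U_k$ and $v\perp V_k$, hence $W^Tz=0$. Expanding the unit vector $z$ in the basis $[w,W,W_\perp]$, the middle component drops out and we can write $z=w\cos\phi+W_\perp y\sin\phi$ with $\phi=\angle(z,w)$ and $\|y\|=1$. Applying $B-\gamma I$ and using the block-diagonal form yields
\begin{displaymath}
Bz-\gamma z=\left(\tfrac{1}{\sigma-\tau}-\gamma\right)w\cos\phi+W_\perp(L-\gamma I)y\sin\phi,
\end{displaymath}
whose two summands are mutually orthogonal, so
\begin{displaymath}
\|Bz-\gamma z\|^2\ge\sin^2\phi\,\|(L-\gamma I)y\|^2\ge\sin^2\phi\cdot\mathrm{sep}(\gamma,L)^2,
\end{displaymath}
which is \eqref{angle1}.

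The main obstacle is the identification of the $2k\times 2k$ middle block in \eqref{eqlemme5}: one must work through $B^{-1}$ rather than $B$ directly and match against the SVD relations of $A$. Once this block is pinned down, the orthogonality $w\perp W$ follows from simplicity of $\sigma$, and the assumption $z\perp\perp(U_k,V_k)$ is precisely what is needed to eliminate the middle component of $z$, reducing the proof to the two-block argument familiar from Lemma~\ref{lemma:1}.
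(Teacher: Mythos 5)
Your proof is correct and, for the bound \eqref{angle1}, follows essentially the same argument as the paper: expand $z$ in the orthonormal basis $[w,W,W_\perp]$, use $z\perp\perp(U_k,V_k)$ to kill the middle component, and bound $\|Bz-\gamma z\|$ from below by ${\rm sep}(\gamma,L)\,\|W_\perp^Tz\|={\rm sep}(\gamma,L)\sin\angle(z,w)$ via the orthogonality of the two summands. You go slightly beyond the paper in that you explicitly verify the block-diagonalization \eqref{eqlemme5} (via $B^{-1}W=W\begin{bmatrix}\begin{smallmatrix}-\tau I_k&\Sigma_k\\ \Sigma_k&-\tau I_k\end{smallmatrix}\end{bmatrix}$, the orthogonality $W^Tw=0$ from distinctness of the singular values, and invariance of the complement under the symmetric $B$), which the paper states without proof; that verification is sound.
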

 \begin{proof}
 Since $z\perp\perp (U_k,V_k)$, we have $W^Tz=0$
 and
 \begin{equation}\label{decomz}
 z=ww^Tz+WW^Tz+W_{\perp}W_{\perp}^Tz=ww^Tz+W_{\perp}W_{\perp}^Tz.
 \end{equation}
 By \eqref{eqlemme5}, since $W_{\perp}^Tw=0$ and the smallest singular value
 of $L-\gamma I$ is ${\rm sep}(\gamma,L)$,
 it holds that
 \begin{eqnarray}
   \|(B-\gamma I)z\|&=&\|(\frac{1}{\sigma-\tau}-\gamma)
   (w^Tz)w+W_{\perp}(L-\gamma I)W_{\perp}^Tz\| \nonumber\\
   &\geq& \|W_{\perp}(L-\gamma I)W_{\perp}^Tz\|= \|(L-\gamma I)W_{\perp}^Tz\|
   \geq {\rm sep}(\gamma,L) \|W_{\perp}^Tz\|. \label{inequ}
 \end{eqnarray}
It follows from \eqref{decomz} that $\|W_{\perp}^Tz\|=
\|W_{\perp}W_{\perp}^Tz\|=\|(I-ww^T)z\|=\sin\angle(z,w)$,
which and \eqref{inequ} establish \eqref{angle1}.
 \end{proof}

With the help of Lemma \ref{lemma:2}, it is straightforward to derive
Theorem \ref{theorem:3} for the solution accuracy $\varepsilon$ of
\eqref{correction} and the
accuracy $\widetilde \varepsilon$ of the expansion vectors,
where $\delta$ satisfies the estimates in Theorem \ref{theorem:4}.

It is necessary to point out that our theoretical extensions above are
established under the assumption \eqref{assumption}.
For $tol>0$, we can prove that the bounds in
Theorem~\ref{theorem:2} and Theorems~\ref{theorem:3}--\ref{theorem:4}
hold within $\mathcal{O}(tol)$. The derivation
is routine but tedious, and we do not give details in this paper.

\section{Practical stopping criteria for inner iterations}\label{section:4}
In this section, we use the results established in the last two sections
to determine a practical $\varepsilon$ from a given $\widetilde{\varepsilon}$
and derive practical stopping criteria for the inner iterations involved in the
inexact JDSVD type algorithms.
To this end, we will restore the notations used for HJDSVD and RHJDSVD
in Section \ref{section:2}
in order to treat the two inexact algorithms separately.

From Theorem \ref{theorem:3}, we take $\delta=1$ in computation.
Since we cannot compute $\sqrt{\|g_{\perp}\|^2+\|h_{\perp}\|^2}$ directly,
we simply replace it by its upper bound one, which makes $\varepsilon$
as small as possible, so that the inexact JDSVD methods are more reliable to mimic
their exact counterparts.
Also, in Theorem~\ref{theorem:3}, we replace the a-prior quantity
$\sigma$ by the current approximate singular value,
i.e., $\rho$ for HJDSVD and $\rho^{\prime}$ for RHJDSVD.

For ${\rm sep}(\gamma,L)$, we see from Theorem~\ref{alphabeta}
that $\gamma\approx \frac{1}{\alpha}
\approx\frac{1}{\beta}= \frac{1}{\theta-\tau}+ \mathcal{O}(\epsilon^2)$.
So we use ${\rm sep}(\frac{1}
{\rho-\tau},L)$ and ${\rm sep}(\frac{1}{\rho^{\prime}-\tau},L)$ to estimate
${\rm sep}(\gamma,L)$ in HJDSVD and RHJDSVD, respectively. However, since $L$
is unavailable, it is impossible to compute
${\rm sep}(\frac{1}{\rho-\tau},L)$ or ${\rm sep}(\frac{1}{\rho^{\prime}-\tau},L)$.
 We can exploit the eigenvalues of the matrix pencil
$(F,G)$ to estimate ${\rm sep}(\frac{1}{\rho-\tau},L)$ or
${\rm sep}(\frac{1}{\rho^{\prime}-\tau},L)$. Let $\frac{1}{\nu_i}$ be
the eigenvalues of $(F,G)$ in (\ref{equation:17}) other than
its largest $\frac{1}{\nu}$ in magnitude
and such that $\theta^{\prime}_i=\nu_i+\tau$ are positive.
Then $\theta^{\prime}_i=\nu_i+\tau$ are approximate singular values of $A$.
Therefore, we have the estimates
\begin{displaymath}\begin{aligned}
{\rm sep}(\frac{1}{\rho-\tau},L) &\approx \min_{\theta^{\prime}_i\not=\rho}
|\frac{1}{\rho-\tau} -\frac{1}{\theta^{\prime}_i-\tau}|  \quad  \quad \ \mbox{for\ HJDSVD},\\
{\rm sep}(\frac{1}{\rho^{\prime}-\tau},L) &\approx\min_{\theta^{\prime}_i\not=\rho}|
\frac{1}{\rho^{\prime}-\tau}-\frac{1}{\theta^{\prime}_i-\tau}| \quad  \quad \mbox{for\ RHJDSVD}.
\end{aligned}\end{displaymath}

Taking the equality in the bound of (\ref{equation:15}) with all the above estimates,
we obtain
\begin{equation}
\varepsilon_{H}
=2\sqrt2\widetilde\varepsilon\max_{\nu_i\not=\nu}
\frac{|\nu_i|} {|\nu_i+\tau-\rho|}
\end{equation}
for HJDSVD and
\begin{equation}
\varepsilon_{RH}
=2\sqrt2\widetilde\varepsilon\max_{\nu_i\not=\nu}
\frac{|\nu_i|}{|\nu_i+\tau-\rho^{\prime}|}
\end{equation}
for RHJDSVD, respectively.

Based on the above results, we can now write $\varepsilon\leq c\widetilde\varepsilon$
in a unified form, where
\begin{equation}
c=\left\{\begin{aligned}
&2\sqrt2\max_{\nu_i\not=\nu}
\frac{|\nu_i|}{|\nu_i+\tau-\rho|} \quad  \quad \ \mbox{for HJDSVD,} \\
&2\sqrt2\max_{\nu_i\not=\nu}
\frac{|\nu_i|}{|\nu_i+\tau-\rho^{\prime}|} \quad  \quad \mbox{for RHJDSVD }\end{aligned}\right.
\end{equation}
for $m>1$, and $c=1$ for $m=1$ when solving (\ref{equation:1}) approximately.

For a not very small $\widetilde\varepsilon$, we may have $\varepsilon\geq1$
in case $c$ is large,
which will make $(\widetilde s,\widetilde t)$ have no accuracy as an approximation
to the exact solution $(s,t)$ to (\ref{equation:1}), so that
$\widetilde {\mathcal{U}}_{+}$ and $\widetilde{\mathcal{V}}_{+}$ may have no
improvement over $\mathcal{U}$ and $\mathcal{V}$. As a guard remedy,
in order to make
$\widetilde {\mathcal{U}}_{+}$ and $\widetilde{\mathcal{V}}_{+}$ have some
improvements, we propose to use
\begin{equation}\label{equation:39}
\varepsilon\leq \min\{c\widetilde \varepsilon,0.01\}.
\end{equation}

However,
$\varepsilon=\frac{\left\| \begin{bmatrix} \begin{smallmatrix} \widetilde{s}\\
\widetilde {t} \end{smallmatrix} \end{bmatrix}- \begin{bmatrix}
\begin{smallmatrix}s\\t \end{smallmatrix} \end{bmatrix}\right\|}
{\left\| \begin{bmatrix} \begin{smallmatrix}s\\t \end{smallmatrix}
\end{bmatrix}\right\|}$
is an a-prior error and uncomputable in practice, so that we cannot
determine whether or not \eqref{equation:39} is fulfilled
for a prescribed $\widetilde\varepsilon$.
Nevertheless, it is easy to justify that
 \begin{equation}\label{equation:40}
 \frac{\varepsilon}{\kappa(B^{\prime})}
 \leq r_{in}
 \leq \kappa(B^{\prime})\varepsilon,
 \end{equation}
 where
 \begin{equation}\label{equation:47}
 r_{in}=\frac{1}{\|r\|}\left\|-r\!-\!
 \begin{bmatrix}
  I_M\!-\!P_{u}&\\&\!\!\!\!\!I_N\!-\!P_{v}
 \end{bmatrix}\begin{bmatrix} \!-\tau I_M&\!A\\\!A^T&\!-\tau I_N
   \end{bmatrix}\! \begin{bmatrix}
 I_M\!-\!P_{u}&\\ &\!\!\!\!\!I_N\!-\!P_{v}   \end{bmatrix}\!
 \begin{bmatrix}  \widetilde s\\\widetilde t
  \end{bmatrix}\right\|
\end{equation}
 is the computable relative residual norm of
 the approximate solution $[\widetilde s^T,\widetilde t^T]^T$
 of (\ref{equation:1}),
 and $\kappa(B^{\prime})=
 \| B^{\prime}\|\|(B^{\prime})^{-1}\|$ with
 $$B^{\prime}=B|_{(Q,Z)^{\perp\perp}}=\begin{bmatrix} -
 \tau I_M & A\\A^T& -\tau I_N \end{bmatrix}^{-1}
 \Big|_{(Q,Z)^{\perp\perp}}$$
  being the restriction of $B$ to the double orthogonal complement\footnote{For
  arbitrary $Q\in \mathbb{R}^{M\times l},
  Z\in \mathbb{R}^{N\times l}$, the double orthogonal complement of $(Q,Z)$
  is defined as $(Q,Z)^{\perp\perp}:=\{(q,z)|q\in
  \mathbb{R}^M, z\in \mathbb{R}^N, q\perp Q,z\perp Z\}$.} of $(Q,Z)$.
  Based on the two bounds in (\ref{equation:40}),
  we practically stop the inner iterations at each outer iteration when
\begin{equation}\label{equation:41}
r_{in}\leq \min\{c\widetilde \varepsilon,0.01\}
\end{equation}
for a given $\widetilde \varepsilon$. (\ref{equation:40}) indicates that $r_{in}$
is a reasonable replacement for $\varepsilon$
when $\kappa(B^{\prime})$ is fairly modest. We should remark that the lower and
upper bounds in \eqref{equation:40}
are the estimates in the worst case for the a-posterior relative residual
norm $r_{in}$ in terms of the a-prior relative error $\varepsilon$.

Let us have a closer look at $\kappa(B^{\prime})$, whose size has two effects:
(i) it decides the convergence speed of the Krylov iterative solver MINRES \cite{saad03},
and the larger it is, the more  slowly MINRES converges
generally \cite{greenbaum}; (ii) it decides how
the a-posterior relative residual norm $r_{in}$ differs from the a-prior error
$\varepsilon$.
The smaller $\kappa(B^{\prime})$ is, the more reliable \eqref{equation:41} is;
conversely, for $\kappa(B^{\prime})$ large, the a-posterior replacement
\eqref{equation:41} may not be reliable. Notice that,
when $(u,v)=(u^{*},v^{*})$ and $(U_c,V_c)=(U_k,V_k)$, we have
  \begin{equation}\label{equation:44}
  \kappa(B^{\prime})=\frac{ \sigma_{\max}(B^{\prime})}
  {\sigma_{\min}(B^{\prime})}=\frac{\max_{i=k+2,k+3,\dots,n}
  |\pm\sigma_i-\tau|}{\min_{i=k+2,k+3,\dots,n} |\pm\sigma_i-\tau|}
  =\frac{\sigma_{\max}+\tau}{|\sigma_{k+2}-\tau|}
\end{equation}
where $\sigma_{\max}=\max\{\sigma_{k+2},\sigma_{k+3},\ldots,\sigma_n\}$.
Therefore, the continuity tells us that
$\kappa(B^{\prime})\approx \frac{\sigma_{\max}+\tau}{|\sigma_{k+2}-\tau|}$
for $\sin\angle(u^{*},u)=\mathcal{O}(\epsilon)$ and
$\sin\angle(v^{*},v)=\mathcal{O}(\epsilon)$. This result shows that
the correction equation becomes (asymptotically) better conditioned as
$k$ increases.

\section{Numerical experiments}\label{section:5}

We report numerical experiments to confirm our theory.
Table \ref{table0} lists the test matrices from \cite{davis2011university}
together with some of their basic properties, where $\kappa(B^{\prime})$ is
the right-hand side of \eqref{equation:44} with $k=0$. For the matrices with $M<N$,
we apply the algorithms to their transposes. We aim to show
two points:
(\romannumeral1) for fairly small $\widetilde\varepsilon=10^{-3}$ and $10^{-4}$,
the non-restarted and restarted inexact HJDSVD and RHJDSVD algorithms behave
(very) like their exact counterparts;
(\romannumeral2) regarding the total inner iterations and overall efficiency,
the inexact JDSVD type algorithms are substantially more efficient than their
exact counterparts. We will compute the $\ell$ singular triplets for given $\tau$'s,
where we take $\ell=1$ and $\ell=5$; we will report the experiments
on two $\ell$'s, separately.

\begin{table}[tbhp]
{\footnotesize
\caption{Properties of test matrices, where $nnz(A)$ is the number of nonzero entries
in $A$, $\kappa(B^{\prime})$ is defined by \eqref{equation:44} for the chosen target
$\tau$ when $k=0$, and $\sigma_{\max}$ and $\sigma_2$ are
estimated by the MATLAB function {\sf svds.m}.
The notation $+\infty$ indicates that $A$ is rank deficient.}\label{table0}
\begin{center}
\begin{tabular}{|c|c|c|c|c|c|} \hline
{\rm Matrix}      & $M\times N$   &$nnz(A)$& $\|A\|$    &$\kappa(A)$
&$\kappa(B^{\prime})$         \\ \hline
{\rm deter4}      &$3235\times 9133$  &$19,231$    &$10.2$
&$3.71e\mbox{+}{2}$ &$13.7$      \\
{\rm lp\_bnl2}    &$2324\times4486$  &$14,996$    &$2.12e\mbox{+}2$
&$7.77e\mbox{+}{3}$ &$4.90e\mbox{+}{2}$       \\
{\rm r05 }        &$5190\times 9690$  &$104,145$    &$18.2 $          &$1.22e\mbox{+}{2}$
&$17.3$                  \\
{\rm large}       &$4282\times 8617$  &$20,635$    &$4.04e\mbox{+}3$ &$4.94e\mbox{+}5$
&$2.25e\mbox{+}3$         \\
{\rm gemat1}      &$4929\times 10595$ &$46,591$    &$2.34e\mbox{+}4$ &$1.17e\mbox{+}8$
&$9.97e\mbox{+}5$         \\
{\rm tomographic1}&$73159\times 59498$ &$647,495$   &$6.98$  &$+\infty$&$12.30$
       \\
{\rm watson\_1}   &$201155 \times 386992$&$1,055,093$&$20.59$ &$8.64e\mbox{+}2$    &$7.58e\mbox{+}3$
       \\
{\rm degme}       &$185501\times 659415$&$8,127,528$  &$2.24e\mbox{+}3$ & $5.42e\mbox{+2}$
&$1.52e+3$ \\ \hline
\end{tabular}
\end{center}
}
\end{table}

All the numerical experiments were performed on an Intel (R) Core (TM) i7-7700
CPU 3.60GHz with the main memory 8GB using the Matlab R2017a with the machine
precision $\epsilon_{\rm mach}=2.22\times 10^{-16}$ under the Windows 10 operating
system.

We denote by HJDSVD$(\widetilde\varepsilon)$ and RHJDSVD$(\widetilde\varepsilon)$ the
inexact JDSVD algorithms for a given $\widetilde\varepsilon$.
We use MINRES to solve the correction equation (\ref{equation:1}) or \eqref{correction}
by taking the ($M+N$)-dimensional zero vector
as an initial approximate solution. The code {\sf minres.m} is from Matlab R2017a.
At each outer iteration of HJDSVD or RHJDSVD, we stop
inner iterations when the stopping criterion \eqref{equation:41} is fulfilled.

For outer iterations, we always take the initial vectors $u_0$ and $v_0$ to be the
normalized $M$- and $N$-dimensional vectors whose elements are equal.
We restart outer iterations after the maximum dimensions of $\mathcal{U}$ and
$\mathcal{V}$ reach 20. The restarting technique used here is the thick-restart,
i.e., when restarting, instead of one, we compute the best three approximate triplets
$(\theta_i, u_i, v_i), i=1,2,3$, in steps 5-7 of
Algorithm \ref{algorithm:1}. To do that, we compute three eigenvectors
$f_i,i=1,2,3,$ of $(F, G)$ associated with the largest three
eigenvalues in magnitude, and each $f_i$ corresponds to an approximate
singular triplet $(\rho_i, \widetilde u_i,\widetilde v_i)$ for HJDSVD.
As for RHJDSVD, we compute the eigenvector $\widehat f_i$
associated with the smallest eigenvalue of $G^{\prime}$
by taking $\rho=\rho_i, \ i=1,2,3$
in \eqref{gprime}, and each $\widehat f_i$ corresponds to an
approximate singular triplet $(\rho^{\prime}_i, \widehat u_i,\widehat v_i)$
for RHJDSVD. Then in the next restart cycle
we use the computed three pairs of approximate left and right singular vectors
to construct new initial $\mathcal{U}$ and $\mathcal{V}$ of dimension three,
respectively, and expand them in the way described by Algorithm~\ref{algorithm:1}.

An approximate singular triplet $(\theta, u=Uc, v=Vd)$ obtained by
the JDSVD type algorithms is claimed to have
converged if the relative residual norm
\begin{displaymath}
\|r\|=\|r(\theta,u,v)\|\leq \|A\|_1\cdot tol=\|A\|_1\cdot 10^{-10}.
\end{displaymath}
We stop outer iterations
if all $\ell$ desired singular triplets have been found or the maximum
outer iterations have been used.

In each of the exact algorithms, for the experimental purpose, we have also solved
the correction equations by applying the LU factorization of
$\begin{bmatrix}\begin{smallmatrix}-\tau I_{M} & A\\A^T &-\tau I_{N}
\end{smallmatrix}\end{bmatrix}$ to \eqref{equation:3},
where $\alpha$ and $\beta$ are computed by the requirement of
double orthogonality of $(s,t)$ and $(u,v)$.
However, due to excessive storage and/or computational cost,
we must remind that it is generally
unrealistic to perform the LU factorization
when $A$ is really large, as confirmed by our experiments.
So in the exact JDSVD type algorithms
it is practical to solve the correction equation
(\ref{equation:1}) or \eqref{correction} by iterative solvers, in which
we stop inner iterations when
\begin{equation}\label{equation:42}
r_{in}\leq10^{-14},
\end{equation}
where $r_{in}$ is defined by (\ref{equation:47}), indicating as if
(\ref{equation:1}) or \eqref{correction} is solved exactly in finite precision
arithmetic. In this case, we call the JDSVD algorithms  iterative exact,
denoted by ''Iter. Exact''.

In all the tables, we denote by $I_{out}$ and $I_{in}$ the
total numbers of outer and inner iterations, respectively,
and by $T_{cpu}$ the CPU time (in seconds) counted by the
Matlab recommended commands {\sf tic} and {\sf toc}. We point out that
$I_{in}$ is a reasonable measure of the overall efficiency of
the JDSVD type algorithms, and it equals the total
number of the matrices $A$ and $A^T$-vector products used by MINRES.
In contrast, $T_{cpu}$ may be an unreliable measure of overall
efficiency since it heavily depends on many factors,
such as the computer used, the programming language used,
the programming optimization,
the computing environment, and the operating system.

\emph{We first test the matrix $A=$ deter4 with $\tau=7$. The desired
singular value $\sigma^*\approx5.74$ is an interior singular value of $A$.}

\begin{figure}[tbhp]
\centering
\subfloat[]{\label{fig1a}\includegraphics[width=0.48\textwidth]{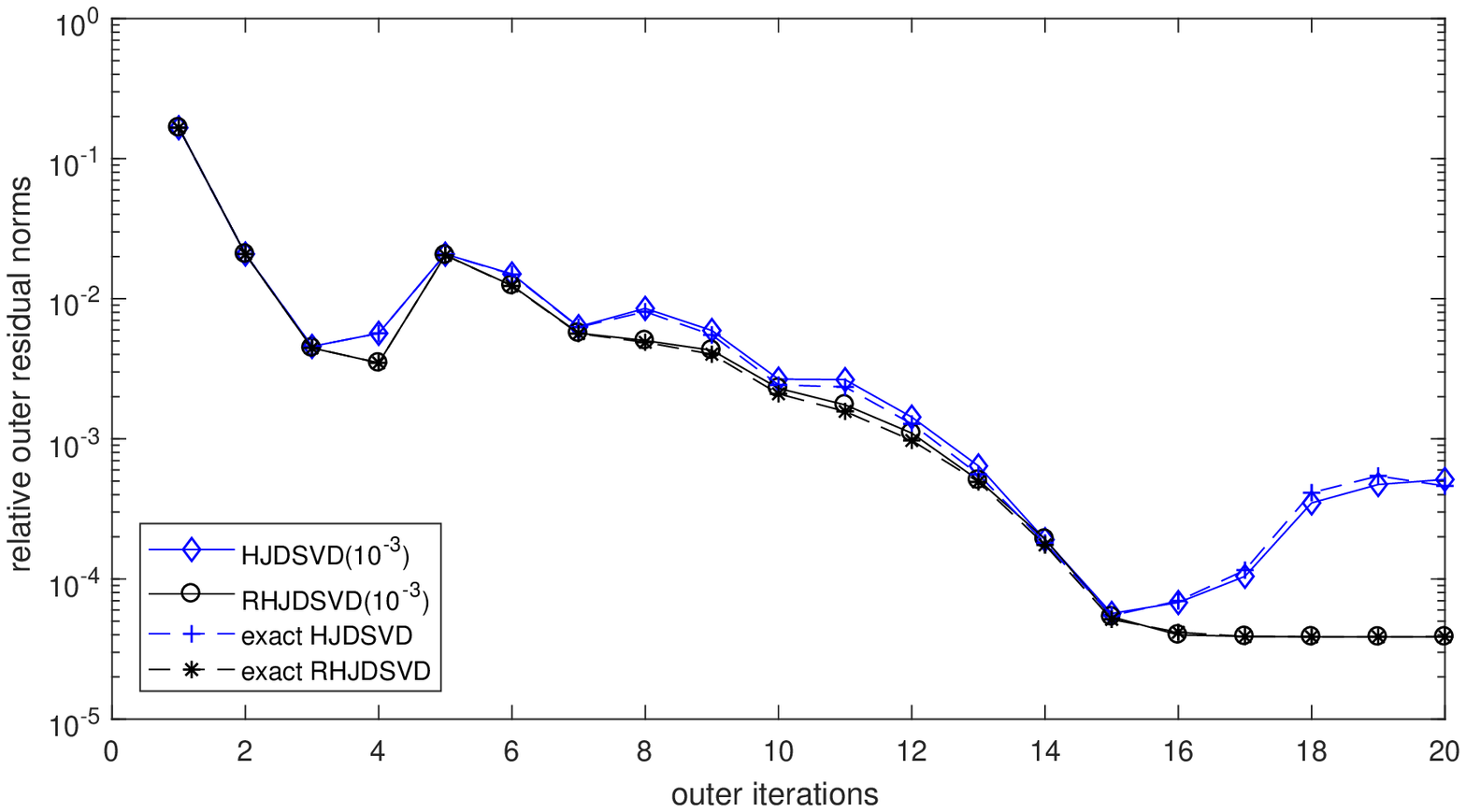}}
\ \ \ \
\subfloat[]{\label{fig1b}\includegraphics[width=0.48\textwidth]{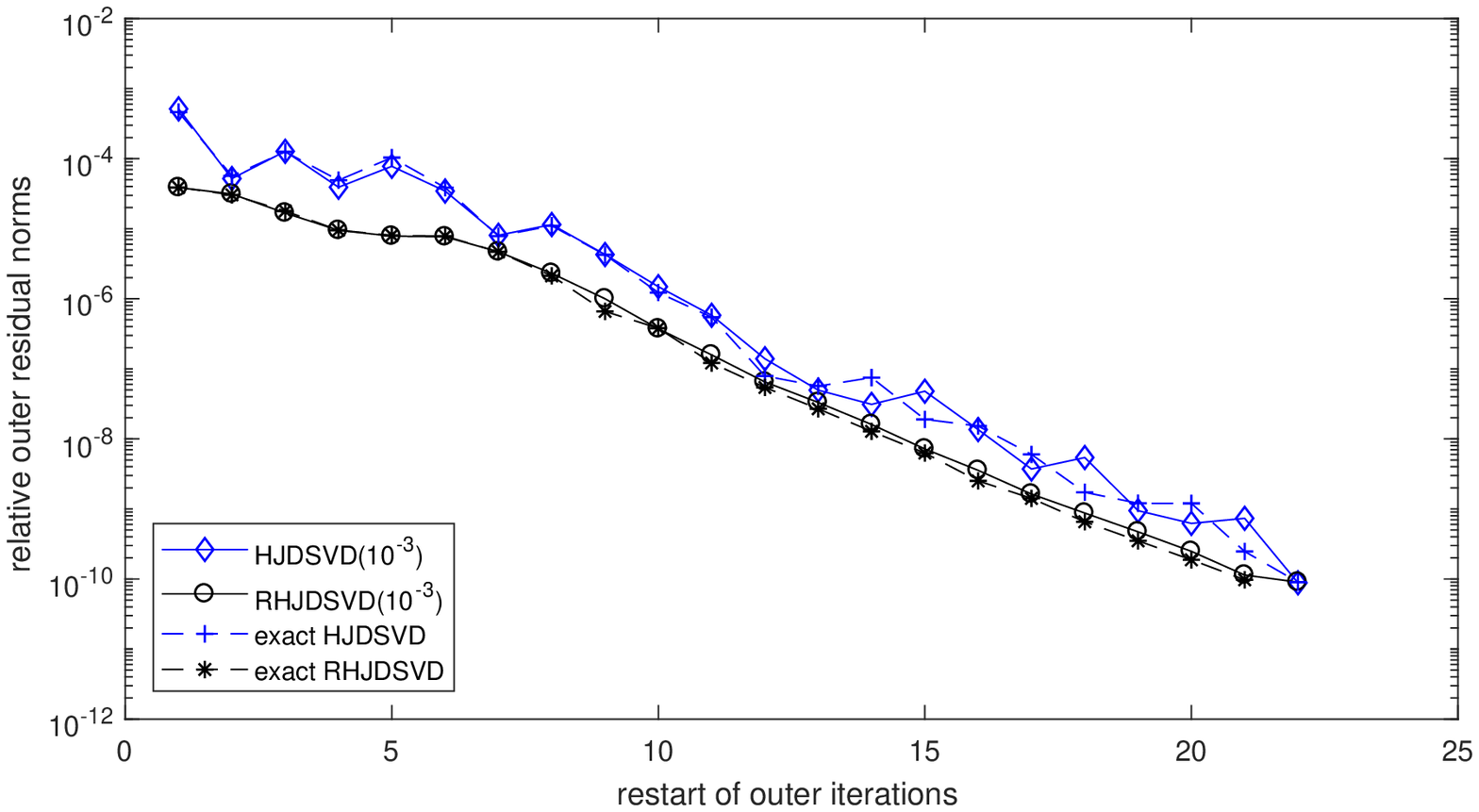}}
\caption{deter4 with $\tau=7$.}
\label{fig1}
\end{figure}
\begin{table}[tbhp]
{\footnotesize
\caption{deter4 with $\tau=7$.}\label{table1}
\begin{center}
\begin{tabular}{|c|c|c|c|c|c|c|c|} \hline
\multicolumn{4}{|c|}{Accuracy: $\widetilde\varepsilon=10^{-3}$}
&\multicolumn{4}{|c|} {Accuracy: Iter. Exact}         \\ \hline
Algorithm      &$I_{out}$    &$I_{in}$    &$T_{cpu}$    &Algorithm     &$I_{out}$   &$I_{in}$      &$T_{cpu}$  \\ \hline
\mbox{HJDSVD}  &430          &7800        &3.08         &\mbox{HJDSVD} &427         &36776         &10.85 \\
\mbox{RHJDSVD} &423          &5673        &2.47         &\mbox{RHJDSVD}&418         &34843         &10.56  \\ \hline
\multicolumn{4}{|c|}{ Accuracy: $\widetilde\varepsilon=10^{-4}$}
&\multicolumn{4}{|c|}{ Accuracy: LU factorization}    \\ \hline
Algorithm      &$I_{out}$    &$I_{in}$    &$T_{cpu}$    &Algorithm     &$I_{out}$  &$I_{in}$      &$T_{cpu}$\\ \hline
\mbox{HJDSVD}  &424          &7722        & 2.91        &\mbox{HJDSVD} &427        &$-$           &37.2 \\
\mbox{RHJDSVD} &421          &5728        &2.34         &\mbox{RHJDSVD}&418        &$-$           &36.6 \\ \hline
\end{tabular}
\end{center}
}
\end{table}

Table~\ref{table1} lists the results obtained. Clearly,
all the algorithms for $\widetilde\varepsilon=10^{-3},10^{-4}$
are successful in computing $\sigma$ and its corresponding singular vectors.
Figure~\ref{fig1}(a) and (b) depict the convergence curves of the inexact and exact
JDSVDs during the first cycle and all cycles, respectively.
We see that the restarted and non-restarted HJDSVD($10^{-3}$)
and RHJDSVD($10^{-3}$) behave quite like their exact counterparts, respectively.

From Table \ref{table1}, we observe that each inexact algorithm uses almost
the same outer iterations as its exact counterpart does. This indicates
that for $\widetilde\varepsilon=10^{-3},10^{-4}$ the inexact JDSVD type
algorithms mimic their exact versions very well. However, regarding
the overall efficiency, compared with their iterative exact versions,
we see that the inexact JDSVD algorithms cost only less than
$22\%$ of total inner iterations, or less than $28\%$ of
CPU time, to compute the desired singular triplet.
A smaller $\widetilde\varepsilon$ is unnecessary
since it cannot reduce outer iterations and in the meantime
we have to solve the correction equations with higher accuracy, which
will increase the total cost substantially.

We can also see from Figure~\ref{fig1} and Table~\ref{table1}
that the restarted RHJDSVD algorithm uses fewer outer iterations than
the restarted HJDSVD algorithm and improves the overall efficiency,
as seen from $I_{in}$ and $T_{cpu}$. Actually, we have observed from
Figure~\ref{fig1} (a) that (exact and inexact) RHJDSVD computed more accurate
approximate singular triplets than HJDSVD at the 16th to the 20th outer iterations,
and the latter diverged and delivered less accurate approximations,
which confirms the better convergence of the refined harmonic extraction and
the possible irregular convergence of the harmonic extraction.

\emph{We next test the matrix $A=$ lp\_bnl2 with $\tau=8.16$. The desired singular
value $\sigma^*\approx7.71$ is an interior one of $A$.}

\begin{figure}[tbhp]
\centering
\subfloat[]{\label{fig2a}\includegraphics[width=0.48\textwidth]{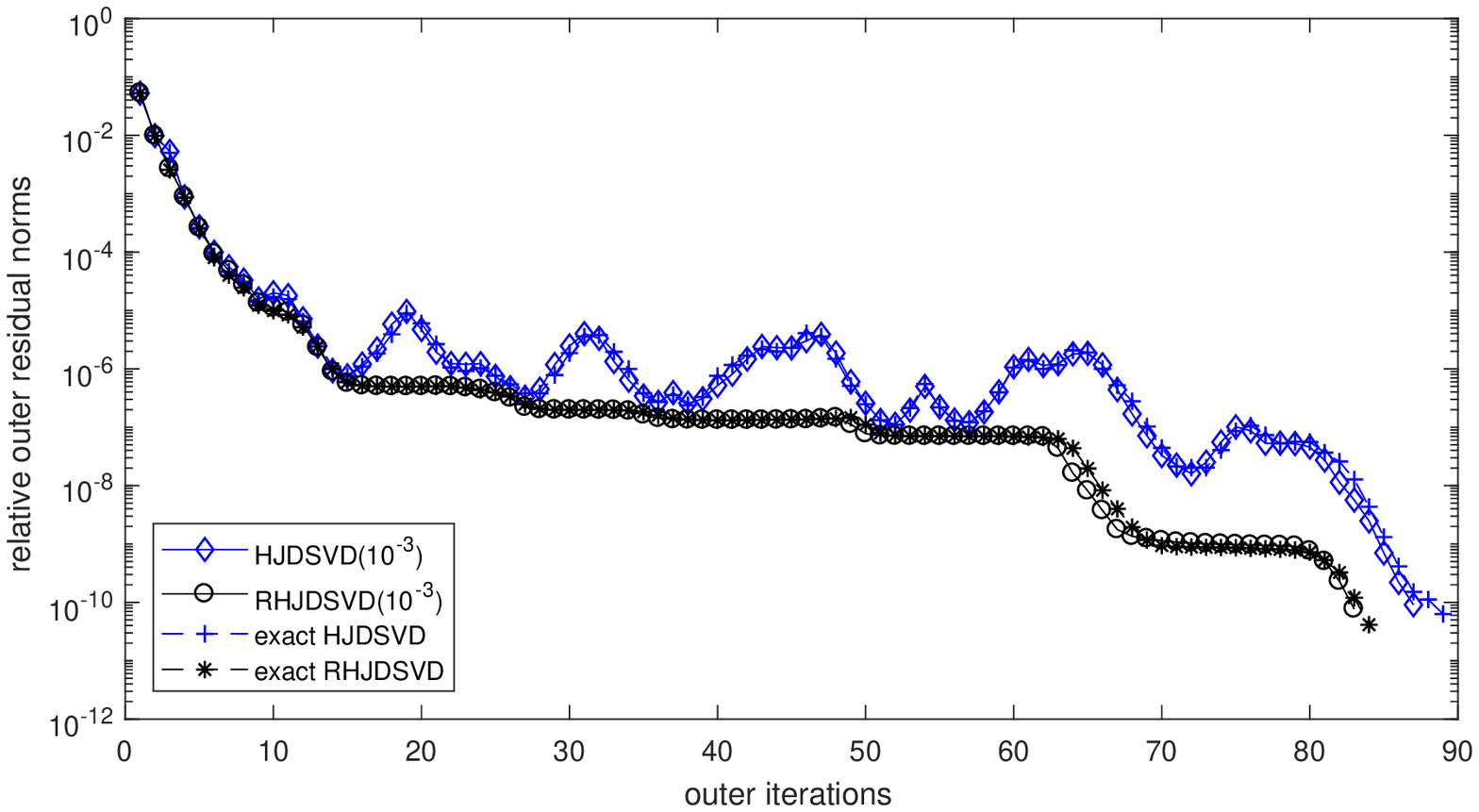}}
\ \ \ \
\subfloat[]{\label{fig2b}\includegraphics[width=0.48\textwidth]{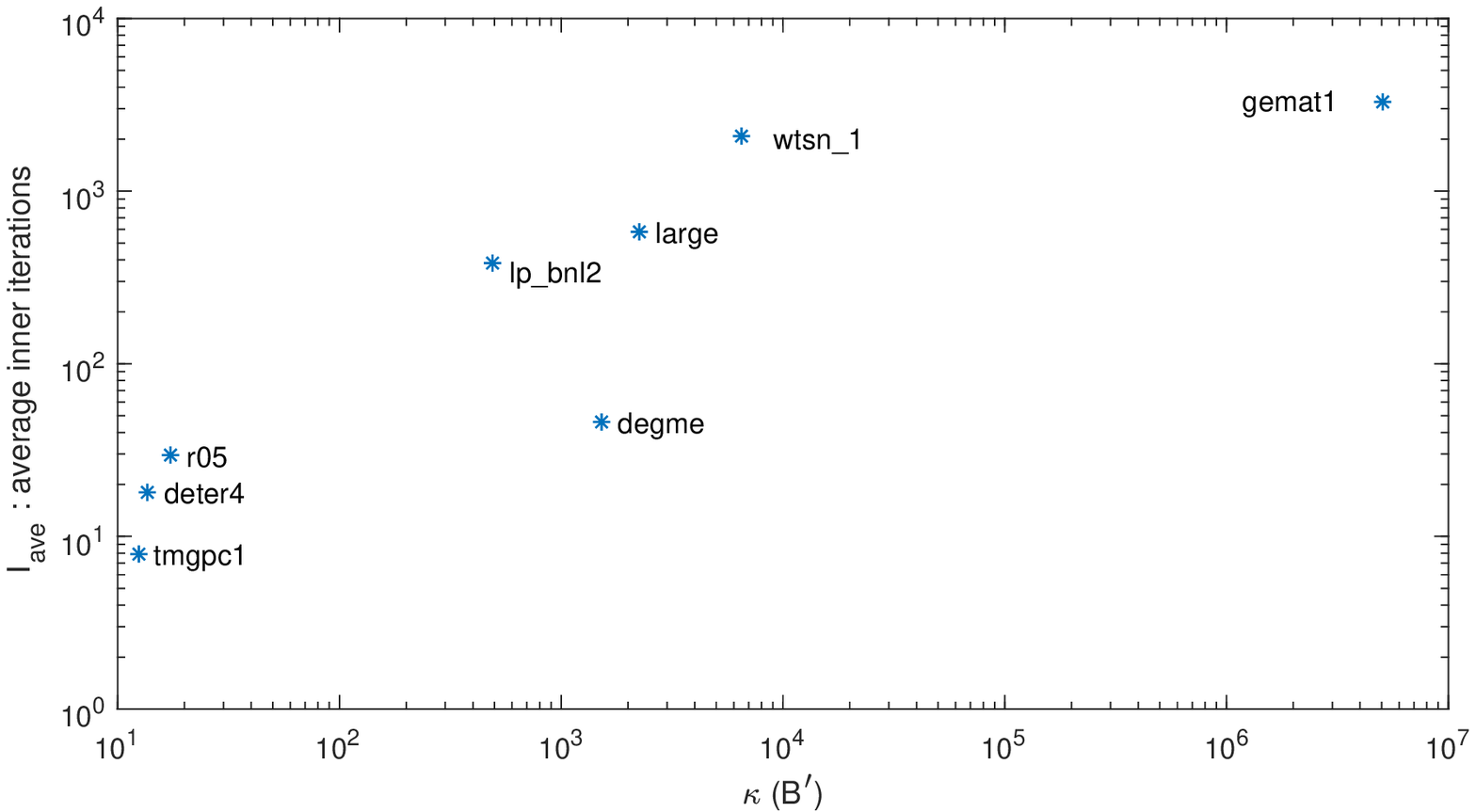}}
\caption{(a) lp\_bnl2 with $\tau=8.16$;
   (b)$I_{ave}$ of HJDSVD($10^{-3}$) versus $\kappa(B^{\prime})$ in Table \ref{table0}.}
\label{fig2}
\end{figure}
\begin{table}[tbhp]
{\footnotesize
\caption{lp\_bnl2 with $\tau=8.16$.}\label{table2}
\begin{center}
\begin{tabular}{|c|c|c|c|c|c|c|c|} \hline
\multicolumn{4}{|c|}{ Accuracy: $\widetilde\varepsilon=10^{-3}$} &\multicolumn{4}{|c|} { Accuracy: Iter. Exact}    \\ \hline
Algorithm      &$I_{out}$     &$I_{in}$ &$T_{cpu}$    &Algorithm     &$I_{out}$  &$I_{in}$ &$T_{cpu}$      \\ \hline
\mbox{HJDSVD}      &87         &30210       &5.11       &\mbox{HJDSVD}      &89         &122495     &21.7       \\
\mbox{RHJDSVD}     &83         &22620       &4.16        &\mbox{RHJDSVD}     &84        &113862      &20.3      \\ \hline
\multicolumn{4}{|c|}{ Accuracy: $\widetilde\varepsilon=10^{-4}$} &\multicolumn{4}{|c|}{ Accuracy: LU factorization}\\ \hline
Algorithm      &$I_{out}$     &$I_{in}$ &$T_{cpu}$    &Algorithm     &$I_{out}$  &$I_{in}$  &$T_{cpu}$     \\ \hline
\mbox{HJDSVD}      &87         &30044       &5.41       &\mbox{HJDSVD}      &89         &$-$         &1.70       \\
\mbox{RHJDSVD}     &83         &23209       &4.24       &\mbox{RHJDSVD}     &84         &$-$         &1.53        \\ \hline
\end{tabular}
\end{center}
}
\end{table}

The results are displayed in Table \ref{table2} and Figure \ref{fig2} (a).
We observe from Figure \ref{fig2} (a) that both
non-restarted and restarted HJDSVD($10^{-3}$) and RHJDSVD($10^{-3}$)
behave almost the same as the corresponding exact HJDSVD and RHJDSVD,
respectively, and RHJDSVD converges more smoothly and gives
more accurate approximate singular triplet than HJDSVD.
We also see from Table \ref{table2} that for
$\widetilde\varepsilon=10^{-3}$ and $10^{-4}$, the inexact JDSVD
algorithms use almost the same outer iterations to
converge as their exact counterparts do.
As far as the total inner iterations and overall efficiency are concerned,
we can see from Table~\ref{table2} that all our inexact JDSVD algorithms
reduce more than $75\%$ of total inner iterations and CPU
time than their iterative exact counterparts.
Apparently, a smaller $\widetilde \varepsilon$ is unnecessary.
A final note is that RHJDSVD performs better than HJDSVD in terms of
both outer and inner iterations.

\emph{We now test the other matrices. For
the three middle-scale matrices $A_1=$ r05 with $\tau_1=4.75$,
$A_2=$ large  with $\tau_2=9.85$, and $A_3=$ gemat with $\tau_3=14.4$,
the desired singular values $\sigma^*\approx 3.43$, $\sigma^*\approx 8.06$
and $\sigma^*\approx 14.38$ are all interior ones of the
test matrices
and are highly clustered with some other singular values;
for the three large-scale matrices
$A_4=$ tomographic1 ('tmgpc1') with $\tau_4=8$,
$A_5=$ watson\_1 ('wtsn\_1') with $\tau_5=14$,
and $A_6=$ degme with $\tau_6=5.56$,
the desired $\sigma^*\approx 6.98$ is the largest one of $A_4$,
$\sigma^*\approx 14.004$ is an interior one of $A_5$,
and $\sigma^*\approx 4.13$ is the smallest one of $A_6$.}

\begin{table}[tbhp]
{\footnotesize
\caption{Results on the other test matrices.}\label{table3}
\begin{center} \resizebox{\textwidth}{23mm}{
\begin{tabular}{|c|c|ccc|ccc|ccc|} \hline
\multirow{2}{*}{Matrix}&\multirow{2}{*}{Algorithm}
&\multicolumn{3}{c|}{$\widetilde\varepsilon=10^{-3}$}
&\multicolumn{3}{c|}{$\widetilde\varepsilon=10^{-4}$}
&\multicolumn{3}{c|}{Iter. Exact}\\ \cline{3-11}
& &$I_{out}$ &$I_{in}$ &$T_{cpu}$ &$I_{out}$ &$I_{in}$ &$T_{cpu}$ &$I_{out}$ &$I_{in}$ &$T_{cpu}$ \\ \hline
\multirow{2}{*}{r05 }
&HJDSVD  &89 &2626 &1.56   &89 &2716 &1.61     &89 &14733 &7.19  \\ 
&RHJDSVD &82 &2100 &1.24   &76 &2010 &1.17     &80 &12981 &6.05  \\ \hline
\multirow{2}{*}{large }
&HJDSVD  &147 &85865 &25.5  &145 &88449 &25.5   &154 &492011 &1.40e+2  \\ 
&RHJDSVD &151 &75227 &21.7  &146 &75820 &21.9   &151 &468380 &1.36e+2  \\ \hline
\multirow{2}{*}{gemat1}
&HJDSVD  &14 &47801 &20.2   &13 &50305 &17.6    &13 &89327 &35.9  \\ 
&RHJDSVD &14 &48205 &19.7   &14 &54638 &22.1    &13 &86521 &35.8  \\ \hline
\multirow{2}{*}{tmgpc1}
&HJDSVD  &22 &176 &1.70 &22 &185 &1.63  &22 &1085  &7.55 \\ 
&RHJDSVD &22 &174 &1.92 &22 &183 &1.69  &22 &1082  &7.25  \\ \hline
\multirow{2}{*}{wstn\_1}
&HJDSVD  &13 &27181 &8.65e+2   &13 &30033  &9.68e+2   &13 &44306 &1.94e+3  \\ 
&RHJDSVD &13 &26989 &8.73e+2   &13 &30017  &9.65e+2    &13 &44462 &1.94e+3  \\ \hline
\multirow{2}{*}{degme}
&HJDSVD  &11 &509 &37.3 &10 &568    &41.3   &10 &2395 &1.67e+2  \\ 
&RHJDSVD &11 &510 &36.2 &10 &564    &40.6   &10 &2387 &2.23e+2  \\ \hline
\end{tabular}}
\end{center}
}
\end{table}

Since the matrices $A_i,\ i=4,5,6,$  are very large in our computer,
it is unaffordable to implement the LU factorization of $\begin{bmatrix}
\begin{smallmatrix}-\tau I_M&A_i\\A_i^T &-\tau I_N \end{smallmatrix}\end{bmatrix}$.
So we only use MINRES to solve the correction
equations involved in the exact JDSVD algorithms.
For $A_i, \ i=1,2,3$, we find that the outer iterations used
by the exact JDSVD algorithms,
where the correction equations are solved by the LU factorization of
$\begin{bmatrix}\begin{smallmatrix}-\tau I_M&A_i\\A_i^T &-\tau I_N
\end{smallmatrix}\end{bmatrix}$,
are exactly the same as those by the iterative exact JDSVD algorithms, as they
should be. Table~\ref{table3} lists the details.

For these six matrices, we have observed very similar phenomena to the previous
examples, so we make comments on them together. We find that all the
inexact JDSVD type algorithms behave almost the same as their iterative exact
counterparts and use almost the same or very comparable outer iterations as
the latter ones do. It is seen from Table~\ref{table3} that
when computing the desired interior singular triplets of $A_3$ and $A_5$,
our inexact JDSVD algorithms reduce more than $32\%$ of
total inner iterations and $38\%$ of CPU time,
compared with their iterative exact counterparts.
The reductions of total inner iterations are up to $76\%$
for $A_6$, and $81\%$ for $A_1$, $A_2$ and $A_4$,
and the corresponding reductions of total CPU time are up to
$95\%$ and $73\%$, respectively. These are
substantial savings, compared with their iterative exact counterparts.

Clearly, a fairly small $\widetilde\varepsilon\in[10^{-4},10^{-3}]$ is
enough for the
inexact JDSVD algorithms to mimic their exact counterparts and reduce
the computational costs substantially, and a smaller
$\widetilde\varepsilon$ is unnecessary.

Summarizing the previous experiments, we conclude that HJDSVD and RHJDSVD
are suitable for computing both an interior and an extreme
singular triplet.

In the following we compute the five singular values of the matrices
in Table \ref{table0} nearest to the given target $\tau$'s
given in the previous experiments and the corresponding left and right
singular vectors.

Table~\ref{table7} gives the results on the eight test matrices.
Figure \ref{fig3} depicts the convergence curves of the inexact and exact
JDSVDs of all cycles for computing the five singular triplets of deter4
with $\tau=7$, lp\_bnl2 with $\tau=8.16$,
r05 with $\tau=4.75$ and large with $\tau=9.85$.
Notice that each algorithm computes the desired singular triplets successively
and it computes the next one after the previous one has converged.
As a result, its convergence curve has five stages and contains five convergence
points (valleys), and each stage computes one singular triplet.

We see that the restarted HJDSVD($10^{-3}$)
and RHJDSVD($10^{-3}$) behave quite like their exact counterparts.
For all the test matrices, we see from Table \ref{table7} that
our inexact JDSVD type algorithms use very comparable outer iterations
to their iterative exact counterparts. Furthermore,
for the matrices gemat1 and wstn\_1,
our inexact JDSVD algorithms reduce more than $34\%$ and $22\%$
of total inner iterations, respectively,
compared with their exact counterparts;
The reductions of total inner reductions
for deter4, lp\_bnl2, r05, large, tmgpc1 and degme are more than $75\%$,
substantial savings, compared with the exact JDSVD algorithms.
Clearly, our inexact JDSVD algorithms with deflation can mimic their exact
counterparts well with a fairly small $\widetilde \varepsilon\in[10^{-4},10^{-3}]$,
and meanwhile reduce the computational cost substantially.

In addition, from Figure \ref{fig3} we have observed faster and smoother
convergence of the RHJDSVD algorithms than the HJDSVD algorithms when computing each of
the desired singular triplets. We can also
see from Table \ref{table7} that the total outer iterations
used by RHJDSVD($\widetilde\varepsilon$) and
HJDSVD($\widetilde\varepsilon$) are very comparable, but for most of
the matrices, i.e., deter4, lp\_bnl2, r05, large, tmgpc1
and degme, RHJDSVD($\widetilde\varepsilon$) uses fewer total inner iterations
than HJDSVD($\widetilde\varepsilon$), so does the CPU time. Therefore,
As a whole, we conclude that, when computing more than one singular triplet,
RHJDSVD is generally more robust and efficient than HJDSVD.

\begin{figure}[tbhp]
\centering
\subfloat[deter4 with $\tau=7$.]
{\label{fig3a}\includegraphics[width=0.48\textwidth]{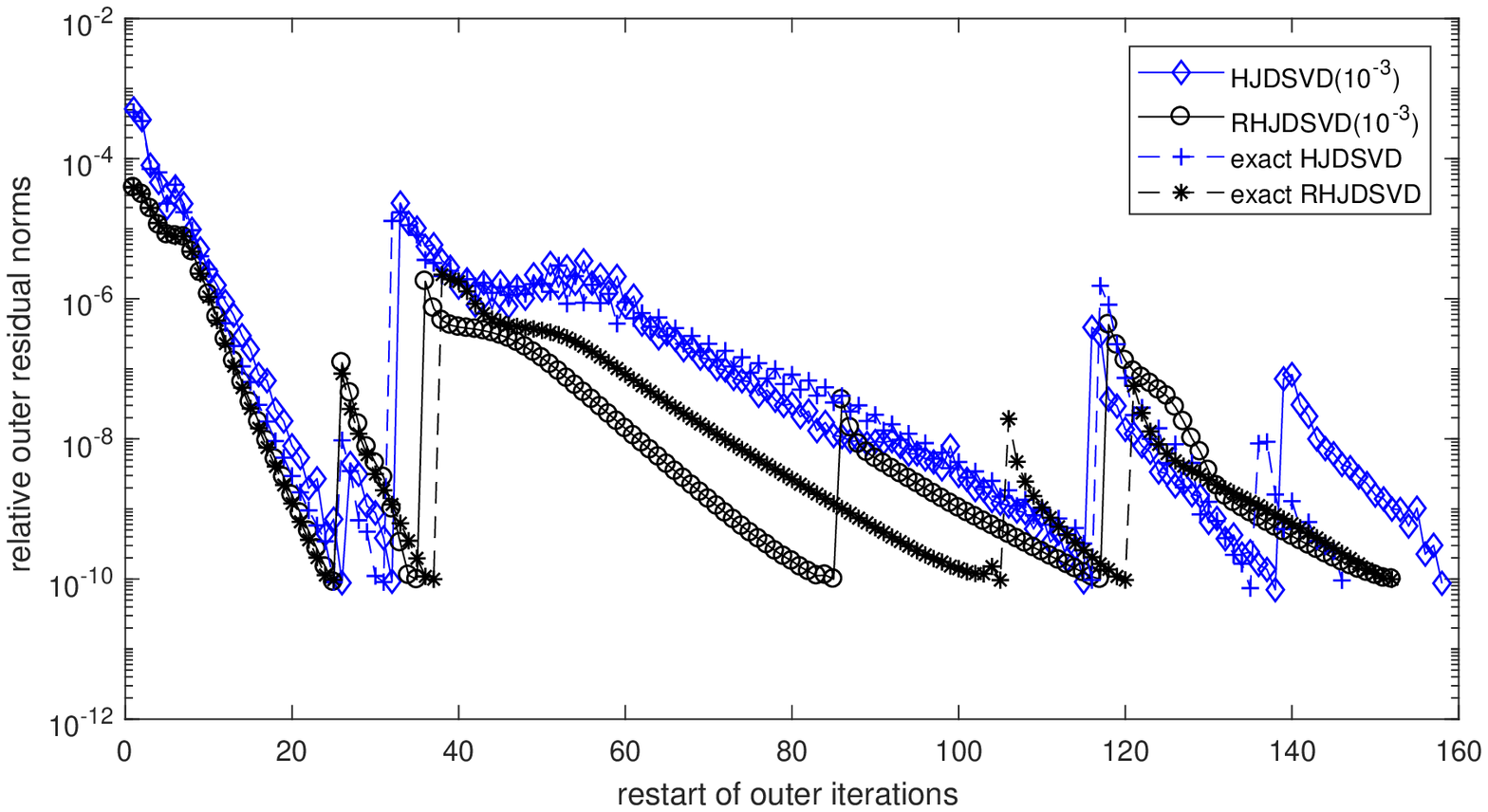}}
\ \ \ \
\subfloat[lp\_bnl2 with $\tau=8.16$.]
{\label{fig3b}\includegraphics[width=0.48\textwidth]{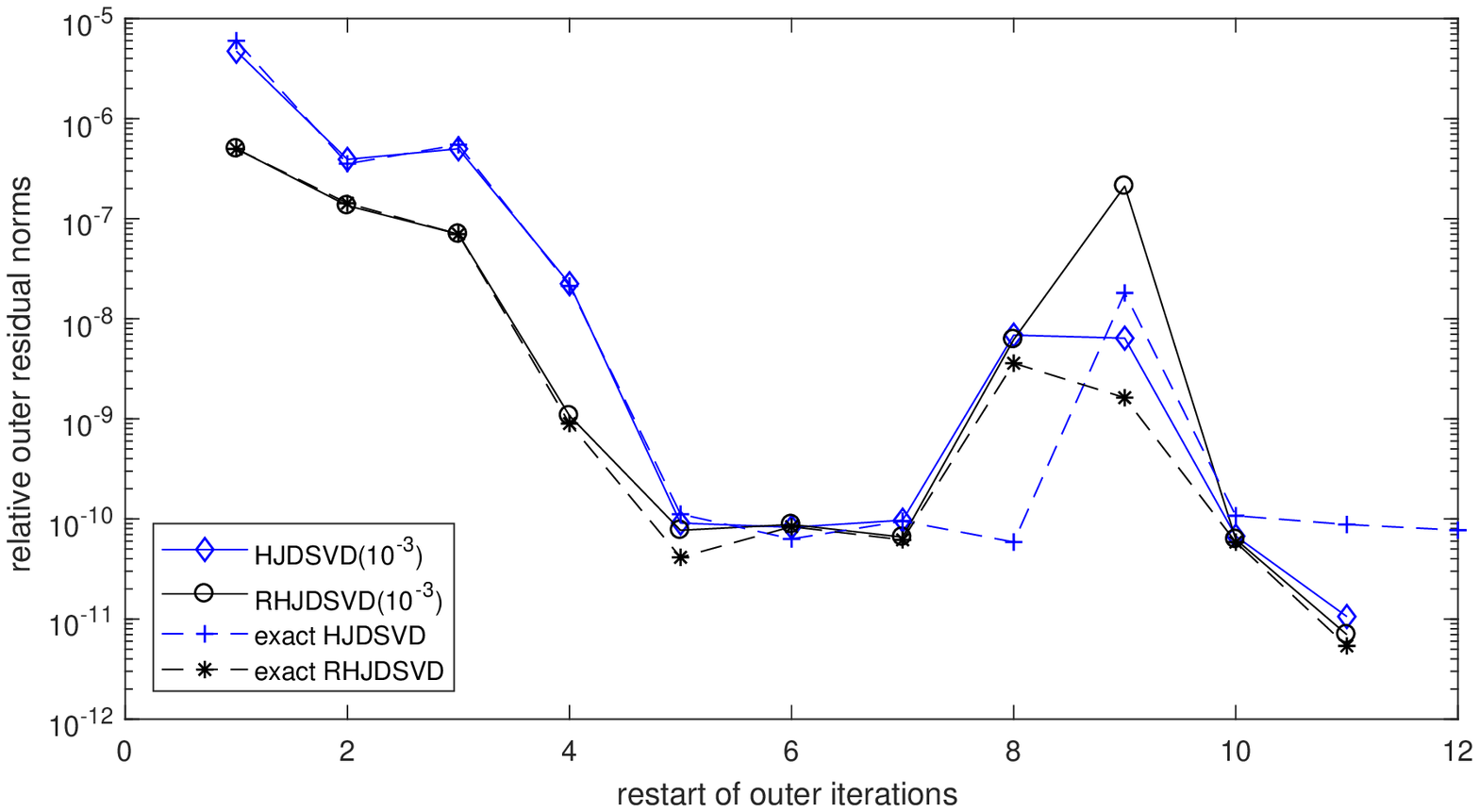}}\\
\subfloat[r05 with $\tau=4.75$. ]
{\label{fig3c}\includegraphics[width=0.48\textwidth]{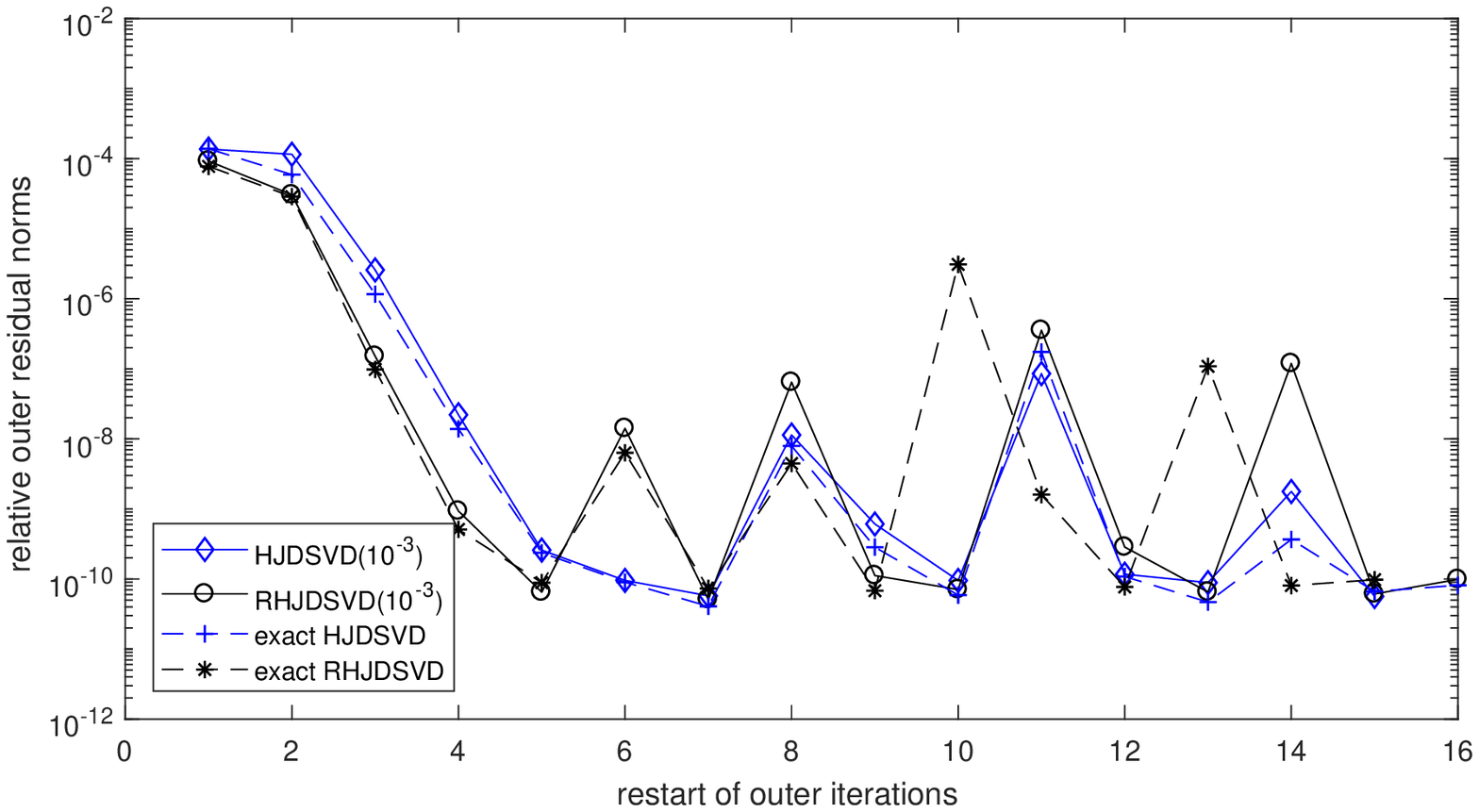}}
\ \ \ \
\subfloat[large with $\tau=9.85$.]
{\label{fig3d}\includegraphics[width=0.48\textwidth]{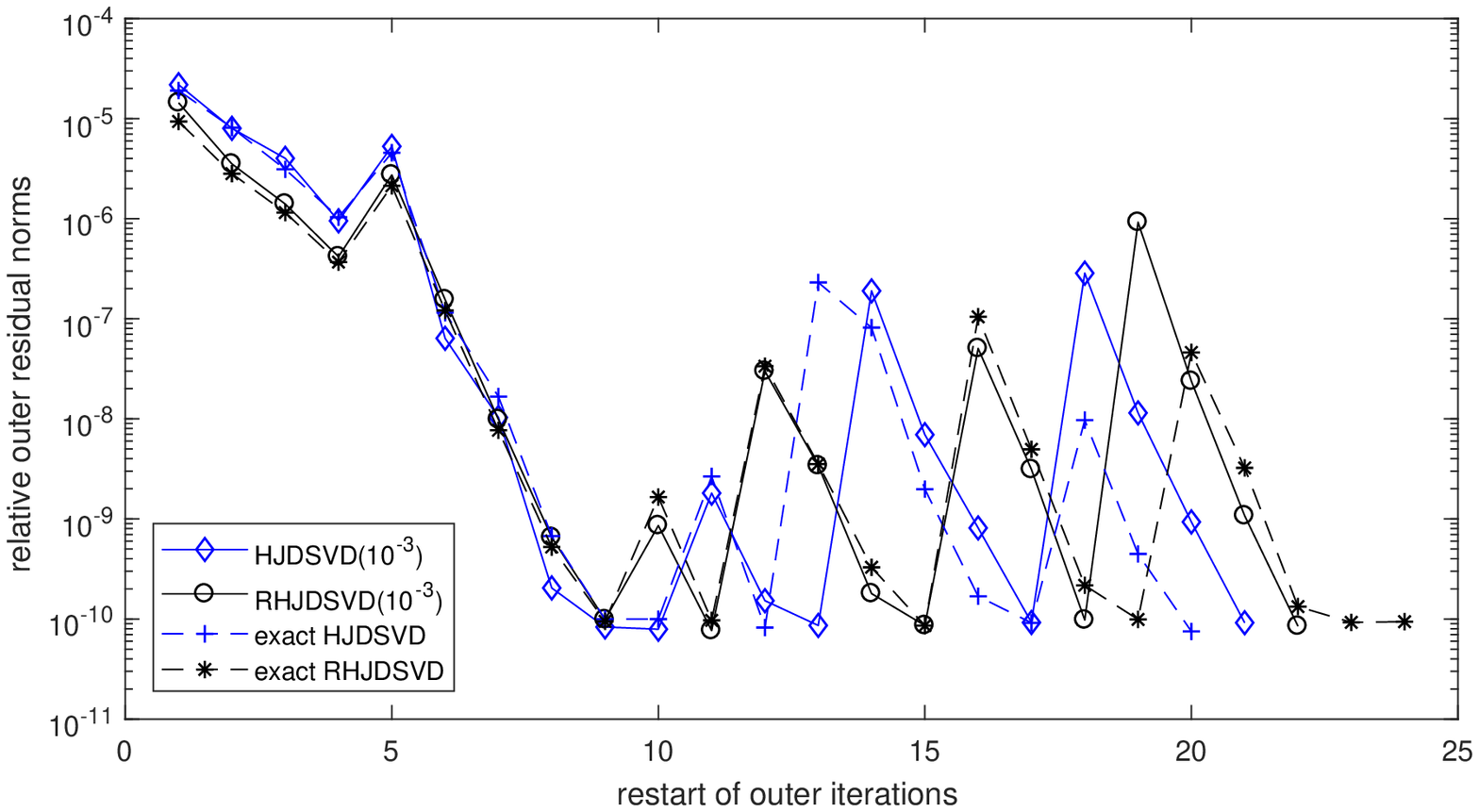}}
\caption{Computing the five singular triplets of the matrices with a given target $\tau$.}
\label{fig3}
\end{figure}

\begin{table}[tbhp]
{\footnotesize
\caption{Results on the computation of the five singular triplets of the
test matrices in Table \ref{table0}.}\label{table7}
\begin{center} \resizebox{\textwidth}{28mm}{
\begin{tabular}{|c|c|ccc|ccc|ccc|} \hline
\multirow{2}{*}{Matrix}&\multirow{2}{*}{Algorithm}
&\multicolumn{3}{c|}{$\widetilde\varepsilon=10^{-3}$}
&\multicolumn{3}{c|}{$\widetilde\varepsilon=10^{-4}$}
&\multicolumn{3}{c|}{Iter. Exact}\\ \cline{3-11}
& &$I_{out}$ &$I_{in}$ &$T_{cpu}$ &$I_{out}$ &$I_{in}$ &$T_{cpu}$ &$I_{out}$ &$I_{in}$ &$T_{cpu}$ \\ \hline
\multirow{2}{*}{deter4 }
&HJDSVD  &2610  &47045  &20.1    &1897  &34374  &14.3      &2324  &190378  &63.2   \\ 
&RHJDSVD &2508  &29669  &14.4    &2377  &28289  &13.7      &2380  &192268  &64.8   \\ \hline
\multirow{2}{*}{lp\_bnl2}
&HJDSVD  &114  &39063  &7.02    &122  &41427  &7.75      &134  &180803  &33.8   \\ 
&RHJDSVD &122  &34391  &6.64    &114  &32221  &6.09      &114  &152770  &28.4   \\ \hline
\multirow{2}{*}{r05}
&HJDSVD  &186  &5515   &3.36    &192  &5724   &3.41      &184  &30264   &15.2   \\ 
&RHJDSVD &192  &4969   &3.28    &176  &4629   &2.92      &177  &28521   &14.0   \\ \hline
\multirow{2}{*}{large}
&HJDSVD  &289  &170847  &54.5    &285  &172054  &53.6      &273  &874840  &2.65e+2   \\ 
&RHJDSVD &311  &160411  &50.2    &300  &156601  &48.7      &316  &988726  &3.05e+2   \\ \hline
\multirow{2}{*}{gemat1}
&HJDSVD  &27  &82899  &37.0    &25  &78417  &33.7      &23  &149002  &63.8   \\ 
&RHJDSVD &31  &94866  &41.5    &27  &88256  &37.6      &23  &145875  &64.3   \\ \hline
\multirow{2}{*}{tmgpc1}
&HJDSVD  &80  &589  &6.53    &78  &582  &6.58      &75  &3525  &28.1   \\ 
&RHJDSVD &77  &566  &6.81    &75  &561  &6.16      &75  &3531  &27.5   \\ \hline
\multirow{2}{*}{wstn\_1}
&HJDSVD  &25  &42834  &1.47e+3    &22  &41995  &1.44e+3      &18  &59750  &2.76e+3   \\ 
&RHJDSVD &25  &44491  &1.54e+3    &22  &41933  &1.58e+3      &17  &56947  &2.50e+3   \\ \hline
\multirow{2}{*}{degme}
&HJDSVD  &57  &2445  &2.03e+2    &58  &2749  &2.28e+2      &55  &13323  &1.12e+3   \\ 
&RHJDSVD &56  &2173  &1.80e+2    &56  &2425  &2.01e+2      &52  &12377  &1.28e+3   \\ \hline
\end{tabular}}
\end{center}
}
\end{table}

We now get insight into the role that $\kappa(B^{\prime})$ plays
in the inexact JDSVD algorithms. We take HJDSVD($10^{-3}$)
for computing one singular triplet as an example.
Denote by $I_{ave}=I_{in}/I_{out}$ the average inner iterations
per outer iteration. Note that a smaller $I_{ave}$ indicates a faster,
on average, convergence of MINRES. To make it clearer so as to see how
$\kappa(B^{\prime})$ influences the convergence speed of inner iterations,
we mark the matrix names in the plot of $\kappa(B^{\prime})$ versus $I_{ave}$
in Figure~\ref{fig2} (b). We observe a trend from the figure and Table~\ref{table0}
that the larger $\kappa(B^{\prime})$ is, the more
inner iterations per outer iteration are needed to achieve
the convergence, i.e., the more slowly MINRES converges.
We have observed similar phenomena for the exact and inexact HJDSVD
and RHJDSVD, which confirms our analysis at the end of Section \ref{section:4}.

Finally, let us make further comments on the correction
equations \eqref{equation:1} and \eqref{correction}.
This system is typically symmetric indefinite,
and it may be ill conditioned when
the desired singular value $\sigma$ is an interior one,
so that MINRES or other Krylov
iterative solvers may converge slowly.
Therefore, preconditioning is naturally appealing.
However, it is hard to obtain an effective preconditioner for MINRES when
(\ref{equation:1}) or \eqref{correction} is highly indefinite and ill conditioned.
Using the MATLAB function {\sf ilu.m}, we have tried the sparse incomplete LU factorizations
of $\begin{bmatrix}\begin{smallmatrix}-\tau I_{M} &A\\A^T &-\tau I_{N} \end{smallmatrix}
\end{bmatrix}$ with $setup.droptol=0.1$ and $0.01$.
With such preconditioners, the preconditioned correction equations are nonsymmetric,
and we use the Krylov solver BiCGStab algorithm \cite{saad03} to solve them.
Unfortunately, we have found that such preconditioners does not work effectively.
For most of the test problems, the preconditioned BiCGStab is even not competitive
with the unpreconditioned MINRES and uses more inner iterations.
So we do not report the results on the preconditioned BiCGStab.

\section{Conclusions}\label{section:6}

We have proposed harmonic and refined harmonic JDSVD methods
for computing several singular triplets of a large matrix $A$.
By a rigorous one-step analysis, we have proved for the first time that, provided
the correction equations \eqref{equation:1} and \eqref{correction}
involved in the JDSVD methods are only solved
with low or modest accuracy, the inexact JDSVD methods mimic their inexact
counterparts well.
Based on the theory, we have proposed general-purpose practical
stopping criteria for inner iterations involved in the two inexact JDSVD type
methods. We should point out that the theory and criteria also work for
the standard and refined inexact JDSVD methods, but
these methods are inferior to HJDSVD and RHJDSVD for computing interior
singular triplets. In the meantime, our results apply to the methods
in \cite{hochstenbach2001jacobi,hochstenbach2004harmonic}.

Numerical experiments have confirmed our theory. We have tested a number of problems
and compared the non-restarted and restarted
inexact JDSVD algorithms with their exact counterparts.
We have found that the inexact JDSVD algorithms indeed mimic
the exact JDSVD algorithms
very well when the correction equations are solved with low or modest accuracy
$10^{-4}\sim 10^{-3}$.
A great advantage of the inexact JDSVD type algorithms is that they
reduce the computational cost very substantially, compared with their
iterative exact versions. Furthermore, the experiments have illustrated that
RHJDSVD generally outperforms HJDSVD.

\section*{Acknowledgements} We thank the referee for his/her careful reading of
the paper and valuable comments that made us
improve the presentation of the paper substantially.


\begin{thebibliography}{10}

\bibitem{davis2011university}
{\sc T.~A. Davis and Y.~Hu}, {\em The {U}niversity of {F}lorida sparse matrix
  collection}, ACM Trans. Math. Software, 38 (2011), pp.~1--25.
\newblock Data available online at
  \url{http://www.cise.ufl.edu/research/sparse/matrices/}.

\bibitem{golub2012matrix}
{\sc G.~H. Golub and C.~F. van Loan}, {\em Matrix Computations}, 4th~ed., The
  John Hopkins University Press, Baltimore, MD, 2012.

\bibitem{greenbaum}
{\sc A.~Greenbaum}, {\em Iterative Methods for Solving Linear Systems}, Front.
  Appl. Math. 17, SIAM, Philadephia, PA, 1997.

\bibitem{hernandes2008robust}
{\sc V.~Hernandes, J.~E. Roman, and A.~Tomas}, {\em A robust and efficient
  parallel {SVD} solver based on restarted {L}anczos bidiagonalization},
  Electron. Trans. Numer. Anal., 31 (2008), pp.~68--85.

\bibitem{hochstenbach2001jacobi}
{\sc M.~E. Hochstenbach}, {\em A {J}acobi--{D}avidson type {SVD} method}, SIAM
  J. Sci. Comput., 23 (2001), pp.~606--628.

\bibitem{hochstenbach2004harmonic}
{\sc M.~E. Hochstenbach}, {\em Harmonic and refined extraction methods for the
  singular value problem, with applications in least squares problems}, BIT, 44
  (2004), pp.~721--754.

\bibitem{hochsten2009}
{\sc M.~E. Hochstenbach and Y.~Notay}, {\em Controlling inner iteration in the
  {J}acobi--{D}avidson method}, SIAM J. Matrix Anal. Appl., 31 (2009),
  pp.~460--477.

\bibitem{jia2002refinedharmonic}
{\sc Z.~Jia}, {\em The refined harmonic {A}rnoldi method and an implicitly
  restarted refined algorithm for computing interior eigenpairs of large
  matrices}, Appl. Numer. Math., 42 (2002), pp.~489--512.

\bibitem{jia2004some}
{\sc Z.~Jia}, {\em Some theoretical comparisons of refined {R}itz vectors and
  {R}itz vectors}, Sci. China, Ser. A, 47 (2004), pp.~222--233.

\bibitem{jia2005convergence}
{\sc Z.~Jia}, {\em The convergence of harmonic {R}itz values, harmonic {R}itz
  vectors and refined harmonic {R}itz vectors}, Math. Comput., 74 (2005),
  pp.~1441--1456.

\bibitem{jia2006using}
{\sc Z.~Jia}, {\em Using cross--product matrices to compute the {SVD}}, Numer.
  Algorithms, 42 (2006), pp.~31--61.

\bibitem{jia2014inner}
{\sc Z.~Jia and C.~Li}, {\em Inner iterations in the shift--invert residual
  {A}rnoldi method and the {J}acobi--{D}avidson method}, Sci. China Math., 57
  (2014), pp.~1733--1752.

\bibitem{jia2015harmonic}
{\sc Z.~Jia and C.~Li}, {\em Harmonic and refined harmonic shift--invert
  residual {A}rnoldi and {J}acobi--{D}avidson methods for interior eigenvalue
  problems}, J. Comput. Appl. Math., 282 (2015), pp.~83--97.

\bibitem{jia2003implicitly}
{\sc Z.~Jia and D.~Niu}, {\em An implicitly restarted refined bidiagonalization
  {L}anczos method for computing a partial singular value decomposition}, SIAM
  J. Matrix Anal. Appl., 25 (2003), pp.~246--265.

\bibitem{jia2010refined}
{\sc Z.~Jia and D.~Niu}, {\em A refined harmonic {L}anczos bidiagonalization
  method and an implicitly restarted algorithm for computing the smallest
  singular triplets of large matrices}, SIAM J. Sci. Comput., 32 (2010),
  pp.~714--744.

\bibitem{jia2001analysis}
{\sc Z.~Jia and G.~W. Stewart}, {\em An analysis of the {R}ayleigh--{R}itz
  method for approximating eigenspaces}, Math. Comput., 70 (2001),
  pp.~637--647.

\bibitem{kokiopoulou2004computing}
{\sc E.~Kokiopoulou, C.~Bekas, and E.~Gallopoulos}, {\em Computing smallest
  singular triplets with implicitly restarted {L}anczos bidiagonalization},
  Appl. Numer. Math., 49 (2004), pp.~39--61.

\bibitem{niu2012harmonic}
{\sc D.~Niu and X.~Yuan}, {\em A harmonic {L}anczos bidiagonalization method
  for computing interior singular triplets of large matrices}, Appl. Math.
  Comput., 218 (2012), pp.~7459--7467.

\bibitem{notay2002combination}
{\sc Y.~Notay}, {\em Combination of {J}acobi--{D}avidson and conjugate
  gradients for the partial symmetric eigenproblem}, Numer. Linear Algebra
  Appl., 9 (2002), pp.~21--44.

\bibitem{parlett1998symmetric}
{\sc B.~N. Parlett}, {\em The Symmetric Eigenvalue Problem}, Classics Appl.
  Math. 20, SIAM, Philadelphia, PA, 1998.

\bibitem{saad03}
{\sc Y.~Saad}, {\em Iterative Methods for Sparse Linear Systems}, 2nd~ed.,
  SIAM, Philadelphia, PA, 2003.

\bibitem{saad2011numerical}
{\sc Y.~Saad}, {\em Numerical Methods for Large Eigenvalue Problems}, SIAM,
  Philadelphia, PA, 2011.

\bibitem{simon2000lowrank}
{\sc H.~D. Simon and H.~Zha}, {\em Low--rank matrix approximation using the
  {L}anczos bidiagonalization process with applications}, SIAM J. Sci. Comput.,
  21 (2000), pp.~2257--2274.

\bibitem{sleijpen2000jacobi}
{\sc G.~L.~G. Sleijpen and H.~A. van~der Vorst}, {\em A {J}acobi--{D}avidson
  iteration method for linear eigenvalue problems}, SIAM Rev., 42 (2000),
  pp.~267--293.

\bibitem{stewart2001matrix}
{\sc G.~W. Stewart}, {\em Matrix Algorithms Volume II: Eigensystems}, SIAM,
  Philadelphia, PA, 2001.

\bibitem{stoll2012krylov}
{\sc M.~Stoll}, {\em A {K}rylov--{S}chur approach to the truncated {SVD}},
  Linear Algebra Appl., 436 (2012), pp.~2795--2806.

\bibitem{wu17}
{\sc G.~Wu}, {\em The convergence of harmonic {R}itz vectors and harmonic
  {R}itz values, revisited}, SIAM J. Matrix Anal. Appl., 38 (2017),
  pp.~118--133.

\bibitem{wu16}
{\sc L.~Wu, R.~Romero, and A.~Stathopoulos}, {\em {PRIMME}\_{SVDS}: a
  high--performance preconditioned {SVD} solver for accurate large--scale
  computations}, SIAM J. Sci. Comput., 39 (2017), pp.~S248--S271.

\bibitem{wu2015preconditioned}
{\sc L.~Wu and A.~Stathopoulos}, {\em A preconditioned hybrid {SVD} method for
  accurately computing singular triplets of large matrices}, SIAM J. Sci.
  Comput., 37 (2015), pp.~S365--S388.

\end{thebibliography}

\end{document}